\newcommand{\Xspace}{\ensuremath{\mathcal{X}}}
\newcommand{\Source}{\ensuremath{P}}
\newcommand{\Target}{\ensuremath{Q}}
\newcommand{\likeratio}{\ensuremath{\rho}}
\newcommand{\TruncLevel}{\ensuremath{{\tau_\numobs}}}
\newcommand{\TruncLike}[1]{\likeratio_\TruncLevel(#1)}
\newcommand{\TrueFun}{\ensuremath{f^\star}}
\newcommand{\FuncClass}{\ensuremath{\mathcal{F}}}
\newcommand{\noise}{\ensuremath{w}}
\newcommand{\numobs}{\ensuremath{n}}
\newcommand{\LRBound}{\ensuremath{B}}
\newcommand{\VarBound}{\ensuremath{V^2}}
\newcommand{\Hilbert}{\mathcal{H}}
\newcommand{\Event}{\mathcal{E}}
\newcommand{\Real}{\ensuremath{\mathbb{R}}}
\newcommand{\Exp}{\ensuremath{\mathbb{E}}}
\newcommand{\Prob}{\ensuremath{\mathbb{P}}}
\newcommand{\indicator}{\ensuremath{\mathbbm{1}}}
\newcommand{\PopCov}{\ensuremath{\bm{\Sigma}}}
\newcommand{\EmpCov}{\ensuremath{\widehat{\bm{\Sigma}}}}
\newcommand{\Term}{\ensuremath{T}}
\theoremstyle{plain}
\newtheorem{lemma}{\textbf{Lemma}}
\newtheorem{theorem}{\textbf{Theorem}}\setcounter{theorem}{0}
\newtheorem{corollary}{\textbf{Corollary}}
\newtheorem{example}{\textbf{Example}}
\theoremstyle{definition}
\definecolor{cm}{RGB}{0,0,200}
\definecolor{newpurple}{RGB}{200,0,200}
\newcommand{\cm}[1]{\textcolor{cm}{[CM: #1]}}
\newcommand{\rpcomment}[1]{\textcolor{magenta}{[RP: #1]}}
\newcommand{\LinearDim}{\ensuremath{d}}
\newcommand{\dualVar}{\ensuremath{\xi}}
\newcommand{\Quantity}{\ensuremath{\alpha}}
\newcommand{\corvec}{\ensuremath{v}}
\newcommand{\keyfunc}{\ensuremath{g}}
\newcommand{\coef}{\ensuremath{\beta}}
\newcommand{\uniVar}{\ensuremath{\omega}}
\newcommand{\thetavecRem}{\ensuremath{\thetavec_{\mathsf{R}}}}
\newcommand{\corvecRem}{\ensuremath{\corvec_{\mathsf{R}}}}
\newcommand{\MmatRem}{\ensuremath{\Mmat_{\mathsf{R}}}}
\newcommand{\Mfun}{\ensuremath{\mathcal{M}}}
\newcommand{\Bigsqrt}[1]{\ensuremath{ \Big \{ #1 \Big \}^{1/2}}}
\newcommand{\rlower}{\ensuremath{\delta_\numobs}}
\newcommand{\dlower}{\ensuremath{d_\numobs}}
\newcommand{\rcrit}{\ensuremath{\delta_\numobs}}
\newcommand{\Ccon}{\ensuremath{c_0}}
\newcommand{\thetavec}{\ensuremath{\theta}}
\newcommand{\thetatil}{\ensuremath{\tilde{\thetavec}}}
\let\thetatild\thetatil
\newcommand{\thetahat}{\ensuremath{\widehat{\theta}}}
\newcommand{\thetahaterm}{\thetahat_{\mathrm{\scriptsize{erm}}}}
\newcommand{\fhat}{\ensuremath{\widehat{f}}}
\newcommand{\fhatrw}{\ensuremath{\fhat^{\mathrm{\scriptsize{rw}}}}_{\lambda} }
\newcommand{\fhatkrr}{\ensuremath{\fhat_{\lambda}}}
\newcommand{\fhaterm}{\ensuremath{\fhat_{\mathrm{\scriptsize{erm}}}}}
\newcommand{\thetakrr}{\ensuremath{\thetahat}}
\newcommand{\firstcoord}{t}
\newcommand{\Termtwoa}{\ensuremath{\Term_{2a}}}
\newcommand{\Termtwob}{\ensuremath{\Term_{2b}}}
\newcommand{\ftil}{\ensuremath{\widetilde{f}}}
\newcommand{\Fclass}{\ensuremath{\mathcal{F}}}
\newcommand{\FclassStar}{\ensuremath{\Fclass^\star}}
\newcommand{\FuncClassStar}{\FclassStar}
\newcommand{\GoodFunc}{\mathcal{G}}
\newcommand{\KerFun}{\ensuremath{\mathscr{K}}}
\newcommand{\fstar}{\ensuremath{f^\star}}
\newcommand{\hilnorm}[1]{\ensuremath{\| #1 \|_{\Hilbert}}}
\newcommand{\htil}{\ensuremath{\widetilde{h}}}
\newcommand{\SUMJ}{\ensuremath{\sum_{j=1}^\infty}}
\newcommand{\unicon}{\ensuremath{c}}
\newcommand{\thetastar}{\ensuremath{\theta^\star}}
\newcommand{\hardfn}{\fstar_{\mathrm{hard}}}
\newcommand{\hardtheta}{{\thetastar_{\mathrm{hard}}}}
\newcommand{\Mmat}{\ensuremath{\bm{M}}}
\newcommand{\IdMat}{\ensuremath{\bm{I}}}
\newcommand{\LamMat}{\ensuremath{\bm{\Lambda}}}
\newcommand{\DelMat}{\ensuremath{\bm{\Delta}}}
\newcommand{\Vmat}{\ensuremath{\bm{V}}}
\newcommand{\Zmat}{\ensuremath{\bm{Z}}}
\newcommand{\diag}{\ensuremath{\mathsf{diag}}}
\newcommand{\trace}{\ensuremath{\mathsf{trace}}}
\newcommand{\KL}{\ensuremath{\mathsf{KL}}}
\newcommand{\1}{\mathbf{1}}
\newcommand{\matrixnorm}[2]{|\!|\!| #1 |\!|\!|_{{#2}}}
\newcommand{\opnorm}[1]{\ensuremath{\matrixnorm{#1}{2}}}
\newcommand{\Natural}{\ensuremath{\mathbb{N}}}
\newcommand{\Eset}{\ensuremath{\mathscr{E}}}
\newcommand{\ObsLike}{\ensuremath{\mathscr{L}}}
\newcommand{\Constraint}{\ensuremath{\mathcal{C}}}
\newcommand{\ConstraintPrime}{\ensuremath{\Constraint^\prime}}
\newcommand{\cstar}{\ensuremath{c^*}}
\newcommand{\real}{\ensuremath{\mathbb{R}}}
\newcommand{\mjwcomment}[1]{{\color{red}{MJW COMMENT: #1}}}
\newcommand{\rank}{\ensuremath{D}}
\newcommand{\radius}{\ensuremath{\|\fstar\|_{\Hilbert}}}
\newcommand{\upstairs}[1]{\textsuperscript{#1}}
\newcommand{\affilone}{\dag}
\newcommand{\affiltwo}{\ddag}
\newcommand{\affilchic}{$\diamond$}
\long\def\@makecaption#1#2{
        \vskip 0.8ex
        \setbox\@tempboxa\hbox{\small {\bf #1:} #2}
        \parindent 1.5em  
        \dimen0=\hsize
        \advance\dimen0 by -3em
        \ifdim \wd\@tempboxa >\dimen0
                \hbox to \hsize{
                        \parindent 0em
                        \hfil 
                        \parbox{\dimen0}{\def\baselinestretch{0.96}\small
                                {\bf #1.} #2
                                } 
                        \hfil}
        \else \hbox to \hsize{\hfil \box\@tempboxa \hfil}
        \fi
        }
\newcommand{\widgraph}[2]{\includegraphics[keepaspectratio,width=#1]{#2}}
\newcommand{\Bias}{\ensuremath{\mathbf{b}}}
\newcommand{\Var}{\ensuremath{\mathbf{v}}}
\newcommand{\Exs}{\ensuremath{\mathbb{E}}}
\newcommand{\MyVarFun}{\ensuremath{\Var_\lambda(\LRBound)}}
\newcommand{\Ball}{\ensuremath{\mathcal{B}}}
\newenvironment{carlist}
 {\begin{list}{$\bullet$}
 {\setlength{\topsep}{0in} \setlength{\partopsep}{0in}
  \setlength{\parsep}{0in} \setlength{\itemsep}{\parskip}
  \setlength{\leftmargin}{0.15in} \setlength{\rightmargin}{0.08in}
  \setlength{\listparindent}{0in} \setlength{\labelwidth}{0.08in}
  \setlength{\labelsep}{0.1in} \setlength{\itemindent}{0in}}}
 {\end{list}}
\newcommand{\bcar}{\begin{carlist}}
\newcommand{\ecar}{\end{carlist}}
\begin{document}

\begin{center}
  {\bf{\LARGE{Optimally tackling covariate shift \\
   in RKHS-based nonparametric regression}}}

  \vspace*{.2in}

  \begin{tabular}{cc}
    Cong Ma\upstairs{\affilchic}, \quad Reese
    Pathak\upstairs{\affilone},\quad and \quad Martin
    J.\ Wainwright\upstairs{\affilone, \affiltwo} \\[1.5ex]
    \upstairs{\affilchic} Department of Statistics, University of
    Chicago \\ \upstairs{\affilone} Department of Electrical
    Engineering and Computer Sciences, UC Berkeley
    \\ \upstairs{\affiltwo} Department of Statistics, UC Berkeley \\
    \texttt{congm@uchicago.edu, \string{pathakr,wainwrig\string}@berkeley.edu}
  \end{tabular}
  \vspace*{.2in}

  \begin{abstract}
    We study the covariate shift problem in the context of
    nonparametric regression over a reproducing kernel Hilbert space
    (RKHS).  We focus on two natural families of covariate shift problems 
    defined using the likelihood ratios between the source and target distributions. When the 
    likelihood ratios are uniformly bounded, we prove that the 
    kernel ridge regression (KRR) estimator with a carefully chosen
    regularization parameter is minimax rate-optimal (up to a log factor)
   for a large family of RKHSs with regular kernel eigenvalues. Interestingly, 
   KRR does not require full
    knowledge of likelihood ratios apart from an upper bound on them.
    In striking contrast to
    the standard statistical setting without covariate shift, we also
    demonstrate that a na\"\i ve estimator, which minimizes the
    empirical risk over the function class, is strictly sub-optimal under 
    covariate shift as
    compared to KRR.  We then address the larger class of covariate
    shift problems where likelihood ratio is possibly unbounded yet has a
    finite second moment.  Here, we
    propose a reweighted KRR estimator that weights
    samples based on a careful truncation of the likelihood ratios.
    Again, we are able to show that this estimator is minimax rate-optimal,
    up to logarithmic factors. 

\end{abstract}

\end{center}

\section{Introduction}

A widely adopted assumption in supervised
learning~\cite{vapnik1999nature, gyorfi2002distribution} is that the
training and test data are sampled from the same distribution.  Such a
no-distribution-shift assumption, however, is frequently violated in
practice.  For instance, in medical image
analysis~\cite{guan2021domain, koh2021wilds}, distribution mismatch is
widespread across the hospitals due to inconsistency in medical
equipment, scanning protocols, subject populations, etc.  As another
example, in natural language
processing~\cite{jiang-zhai-2007-instance}, the training data are
often collected from domains with abundant labels (e.g., Wall Street
Journal), while the test data may well arise from a different domain
(e.g., arXiv which is mainly composed of scientific articles).

In this paper, we focus on a special and important case of
distribution mismatch, known as \emph{covariate shift}. In this
version, the marginal distributions over the input covariates may vary
from the training (or source) to test (or target) data\footnote{Hereafter,
we use source (resp.~target) and training (resp.~testing) interchangeably.}, while
the conditional distribution of the output label given the input
covariates is shared across training and testing. Motivating
applications include image, text, and speech classification in which
the input covariates determine the output
labels~\cite{sugiyama2012machine}.  Despite its importance in
practice, the covariate shift problem is underexplored in theory, when
compared to supervised learning without distribution mismatch---a
subject that has been well studied in the past decades~\cite{gyorfi2002distribution}.

This paper aims to bridge this gap by addressing several fundamental
theoretical questions regarding covariate shift. First, what is the
statistical limit of estimation in the presence of covariate shift?
And how does this limit depend on the ``amount'' of covariate shift
between the source and target distributions?  Second, does
nonparametric least-squares estimation---a dominant (and often
optimal) approach in the no-distribution-shift case---achieve the
optimal rate of estimation with covariate shift?  If not, what is the
optimal way of tackling covariate shift?


\subsection{Contributions and overview}

We address the aforementioned theoretical questions regarding
covariate shift in the context of nonparametric regression over
reproducing kernel Hilbert spaces
(RKHSs)~\cite{scholkopf2002learning}.  That is, we assume that under
both the source and target distributions, the regression function
(i.e., the conditional mean function of the output label given the
input covariates) belongs to an RKHS. In this paper, we focus on 
two broad families of source-target pairs 
depending on the configuration of the likelihood ratios between them.  

We first consider the uniformly $\LRBound$-bounded family in which the 
likelihood ratios are uniformly bounded by a quantity $\LRBound$. In this case, we 
present general performance upper bounds for the kernel ridge regression (KRR)
estimator in Theorem~\ref{ThmKRR}. Instantiations of this general bound 
on various RKHSs with regular eigenvalues are provided in Corollary~\ref{CorKRR}. 
It is also shown in Theorem~\ref{ThmLowerBound} that KRR---with an optimally 
chosen regularization parameter that
depends on the largest likelihood ratio~$\LRBound$---achieves the minimax lower
bound for covariate shift over this uniformly $\LRBound$-bounded family.
It is worth noting that the optimal regularization parameter shrinks as the likelihood
 ratio bound increases. 
 
 We further show---via a constructive argument---that the nonparametric
least-squares estimator, which minimizes the empirical risk on the
training data over the specified RKHS, falls short of achieving the
lower bound; see~Theorem~\ref{ThmConstrainedKRR}. This marks 
a departure from the
classical no-covariate-shift setting, where the constrained estimator
(i.e., the nonparametric least-squares estimator) and the regularized
estimator (i.e., the KRR estimator) can both
attain optimal rates of estimation~\cite{wainwright2019high}.  In essence, the failure arises
from the misalignment between the projections under the source and
target covariate distributions. Loosely speaking, nonparametric
least-squares estimation projects the data onto an RKHS according to
the geometry induced by the \emph{source} distribution. Under
covariate shift, the resulting projection can be extremely far away
from the projection under the \emph{target} covariate distributions.

In the second part of the paper, we turn to a more general setting, where the likelihood
ratios between the target and source distributions may not be bounded.
Instead, we only require the target and source covariate distributions
to have a likelihood ratio with bounded second moment. We propose a variant of KRR that weights 
samples based on a careful truncation of the
likelihood ratios.  We are able to show in
Theorem~\ref{ThmReweightedKRR} that this estimator is rate-optimal over
this larger class of covariate shift problems.

%
%

\subsection{Related work}

There is a large body of work on distribution mismatch and, in
particular, on covariate shift. Below we review the work that is
directly relevant to ours, and refer the interested reader to the
book~\cite{sugiyama2012machine} and the survey~\cite{pan2009survey}
for additional references.

Shimodaira~\cite{shimodaira2000improving} first studied the covariate
shift problem from a statistical perspective, and established the
asymptotic consistency of the importance-reweighted maximum likelihood
estimator (without truncation).  However, no finite-sample guarantees were 
provided therein. Similar to our work, Cortes and
coauthors~\cite{cortes2010learning} analyzed the importance-reweighted
estimator when the density ratio is either bounded or has a finite
second moment. However, their analysis applies to the function class
with finite pseudodimension (cf.~the
book~\cite{pollard2012convergence}), while the RKHS considered herein
does not necessarily obey this assumption. Moreover, even when the RKHS has a finite rank $\rank$, their result (e.g., Theorem 8) 
is sub-optimal---with a rate of $\sqrt{\VarBound \rank / n}$ compared to our optimal rate $\VarBound \rank / \numobs$. 
Here $\VarBound$ is the bound on the second moment of the likelihood ratios and $\numobs$ denotes the number of samples. 
 Recently, Kpotufe and
Martinet~\cite{kpotufe2021marginal} investigated covariate shift for
nonparametric classification. They proposed a novel notion called
transfer exponent to measure the amount of covariate shift between the
source and target distributions. An estimator based on $k$ nearest
neighbors was shown to be minimax optimal over the class of
covariate shift problems with bounded transfer exponent. Inspired by
the work of Kpotufe and Martinet, the current
authors~\cite{pathak2022new} proposed a more fine-grained similarity
measure for covariate shift and applied to nonparametric regression
over the class of H\"{o}lder continuous functions. It is worth
pointing out that both the transfer exponent and the new fine-grained
similarity measure are different and cannot directly be compared to the 
moment conditions we impose on the likelihood ratios in this work. In particular, 
there exist instances of covariate shift where the second moment of the likelihood 
ratios is bounded whereas the transfer exponent is infinite. One such case is 
when the source and target distributions are both Gaussian with the same mean 
but different scales. Another significant difference lies in the assumptions on the regression function. 
Kpotufe and
Martinet~\cite{kpotufe2021marginal} and Pathak et al.~\cite{pathak2022new} focused on 
the class of H\"{o}lder continuous functions, while we focus on RKHSs. This leads to drastically 
different optimal estimators.  
Schmidt-Hieber and Zamolodtchikov~\cite{schmidt2022local} recently established 
the local convergence of the nonparametric least-squares estimator for 
the specific class of 1-Lipschitz functions over the unit interval $[0,1]$ and applied 
it to the covariate shift setting. 
 
Apart from covariate shift, other forms of distribution mismatch have
been analyzed from a statistical perspective. Cai
et~al.~\cite{cai2021transfer} analyzed posterior shift and proposed an
optimal $k$-nearest-neighbor estimator.
Maity~et~al.~\cite{maity2020minimax} conducted the minimax analysis
for the label shift problem. Recently,
Reeve~et~al.~\cite{reeve2021adaptive} studied the general distribution
shift problem (also known as transfer learning) which allows both
covariate shift and posterior shift.

\paragraph{Notation.} Throughout the paper, we use $c, c', c_1,c_2$ to  
denote universal constants, which may vary from line to line.


\section{Background and problem formulation}
\label{sec:setup}

In this section, we formulate and provide background on the problem of
covariate shift in nonparametric regression.

\subsection{Nonparametric regression under covariate shift}

The goal of nonparametric regression is to predict a real-valued
response $Y$ based on a vector of covariates $X \in \Xspace$.  For
each fixed $x$, the optimal estimator in a mean-squared sense is given
by the regression function $\fstar(x) \coloneqq \Exs[Y \mid X = x]$.  In a
typical setting, we assume observations of $\numobs$ i.i.d.~pairs
$\{(x_i, y_i) \}_{i=1}^\numobs$, where each $x_i$ is drawn according
to some distribution $\Source$ over $\Xspace$, and then $y_i$ is drawn
according to the law $(Y \mid X = x_i)$.  We assume throughout that for each $i$, 
the residual $\noise_i \coloneqq y_i - \fstar(x_i)$ is a sub-Gaussian random variable 
with variance proxy $\sigma^2$. 

We refer to the distribution
$\Source$ over the covariate space as the \emph{source distribution}.
In the standard set-up, the performance of an estimator $\fhat$ is
measured according to its $L^2(\Source)$-error:
\begin{subequations}
\begin{align}
\|\fhat - \fstar\|^2_{\Source} & \coloneqq \Exs_{X \sim \Source} \big(
\fhat(X) - \fstar(X) \big)^2 \; = \; \int_{\Xspace} \big(\fhat(x) -
\fstar(x) \big)^2 p(x) dx,
\end{align}
where $p$ is the density of $\Source$.

In the covariate shift version of this problem, we have a different
goal---that is, we wish to construct an estimate $\fhat$ whose
$L^2(\Target)$-error is small.  Here the \emph{target distribution}
$\Target$ is different from the source distribution~$\Source$.  In
analytical terms, letting $q$ be the density of $\Target$, our goal is to
find estimators $\fhat$ such that
\begin{align}
  \|\fhat - \fstar\|^2_{\Target} & = \Exs_{X \sim \Target} \big(
  \fhat(X) - \fstar(X) \big)^2 \; = \; \int_{\Xspace} \big(\fhat(x) -
  \fstar(x) \big)^2 q(x) dx
\end{align}
\end{subequations}
is as small as possible.  Clearly, the difficulty of this problem
should depend on some notion of discrepancy between the source and
target distributions.
\subsection{Conditions on source-target likelihood ratios}

The discrepancy between the $L^2(\Source)$ and $L^2(\Target)$ norms is
controlled by the \emph{likelihood ratio}
\begin{align}
 \likeratio(x) \coloneqq \frac{q(x)}{p(x)},
\end{align}
which we assume exists for any $x \in \Xspace$.  By imposing different
conditions on the likelihood ratio, we can define different families
of source-target pairs $(\Source, \Target)$.  In this paper, we focus
on two broad families of such pairs.

\paragraph{Uniformly $\LRBound$-bounded families:}  For a
quantity $\LRBound \geq 1$, we say that the likelihood ratio is
\emph{$\LRBound$-bounded} if
\begin{align}
\label{EqnLRBound}  
\sup_{x \in \Xspace} \likeratio(x) \leq \LRBound.
\end{align}
It is worth noting that $\LRBound = 1$ recovers the case without
covariate shift, i.e., $\Source = \Target$.  Our analysis in
Section~\ref{SecLRBound} works under this condition.

\paragraph{$\chi^2$-bounded families:} A uniform bound on the likelihood
ratio is a stringent condition, so that it is natural to relax it.
One relaxation is to instead bound the second moment: in particular,
for a scalar $\VarBound \geq 1$, we say that the likelihood ratio is
\emph{$\VarBound$-moment bounded} if
\begin{align}
\label{EqnLRMoment}
\Exp_{X \sim \Source}[\likeratio^2(X)] & \leq \VarBound.
\end{align}
Note that when the uniform bound~\eqref{EqnLRBound}
holds, the moment bound~\eqref{EqnLRMoment} holds with $\VarBound
= \LRBound$. To see this, we can write $\Exp_{X \sim
  \Source}[\likeratio^2(X)] \; = \; \Exs_{X \sim \Target}
[\likeratio(X)] \; \leq \; \LRBound$.  However, the moment
bound~\eqref{EqnLRMoment} is much weaker in general.  It is also worth
noting that the $\chi^2$-divergence between $\Target$ and $\Source$
takes the form
\begin{align*}
\chi^2 (\Target || \Source)
= \Exp_{X \sim \Source}[\likeratio^2(X)] - 1.
\end{align*}
Therefore, one can understand the quantity $\VarBound - 1$ as an upper
bound on the $\chi^2$-divergence between $\Target$ and $\Source$.
Our analysis in Section~\ref{SecLRMoment} applies under this weaker
condition on the likelihood ratio.

\subsection{Unweighted versus likelihood-reweighted estimators}
\label{SecUnwRew}
In this paper, we focus on methods for nonparametric regression that
are based on optimizing over a Hilbert space $\Hilbert$ defined by a
reproducing kernel.  The Hilbert norm $\|f\|_\Hilbert$ is used as a
means of enforcing ``smoothness'' on the solution, either by adding a
penalty to the objective function or via an explicit constraint.

In the absence of any knowledge of the likelihood ratio, a na\"\i ve
approach is to simply compute the \emph{unweighted regularized
estimate}
\begin{align}
\label{EqnUnweightedKRR}
\fhatkrr & \coloneqq \arg \min_{f \in \Hilbert} \Big \{
\frac{1}{\numobs} \sum_{i=1}^\numobs (f(x_i) - y_i)^2 + \lambda
\hilnorm{f}^2 \Big \},
\end{align}
where $\lambda > 0$ is a user-defined regularization parameter.  When
$\Hilbert$ is a reproducing kernel Hilbert space (RKHS), then this
estimate is known as the \emph{kernel ridge regression} (KRR) estimate.
This is a form of empirical risk minimization, but in the presence of
covariate shift, the objective involves an empirical approximation to
$\Exs_\Source[(Y - f(X))^2]$, as opposed to $\Exs_\Target[(Y -
  f(X))^2]$.

If the likelihood ratio were known, then a natural proposal is to
instead compute the \emph{likelihood-reweighted regularized estimate}
\begin{align}
\label{EqnIntroReweightedKRR}  
\ftil^{\mbox{\scriptsize{rw}}}_\lambda & \coloneqq \arg \min_{f \in
  \Hilbert} \Big \{ \frac{1}{\numobs} \sum_{i=1}^\numobs
\likeratio(x_i) (f(x_i) - y_i)^2 + \lambda \hilnorm{f}^2 \Big \}.
\end{align}
The introduction of the likelihood ratio ensures that the objective
now provides an unbiased estimate of the expectation $\Exs_\Target[(Y
  - f(X))^2]$.  However, reweighting by the likelihood ratio also
increases variance, especially in the case of unbounded likelihood
ratios.  Accordingly, in Section~\ref{SecLRMoment}, we study a
suitably truncated form of the
estimator~\eqref{EqnIntroReweightedKRR}.

\subsection{Kernels and their eigenvalues}

Any reproducing kernel Hilbert space is associated with a positive
semidefinite kernel function $\KerFun: \Xspace \times \Xspace
\rightarrow \real$.  Under mild regularity conditions, Mercer's
theorem guarantees that this kernel has an eigen-expansion of the form
\begin{align}
\label{defn:kernel-function}
\KerFun(x, x') \coloneqq \SUMJ \mu_j \phi_j(x) \phi_j(x')
\end{align}
for a sequence of non-negative eigenvalues $\{\mu_j\}_{j \geq 1}$, and
eigenfunctions $\{\phi_j\}_{j \geq 1}$ taken to be orthonormal in
$L^2(\Target)$.  Given our goal of deriving bounds in the
$L^2(\Target)$-norm, it is appropriate to expand the kernel in
$L^2(\Target)$, as we have done here~\eqref{defn:kernel-function}, in
order to assess the richness of the function class.

Given the Mercer expansion, the squared norm in the reproducing kernel
Hilbert space takes the form
\begin{align*}
\|f\|_\Hilbert^2 & = \SUMJ \frac{\theta_j^2}{\mu_j}, \qquad \mbox{where
  $\theta_j \coloneqq \int_\Xspace f(x) \phi_j(x) q(x) dx$.}
\end{align*}
Consequently, the Hilbert space itself can be written as
\begin{align}
\Hilbert \coloneqq \bigg \{ f = \SUMJ \theta_{j} \phi_{j} \; \mid \;
\SUMJ \frac{ \theta_{j}^2 }{ \mu_{j} } < \infty \bigg \}.
\end{align}
Our goal is to understand the performance of nonparametric regression
under covariate shift when the regression function lies in $\Hilbert$.

Throughout this paper, we impose a standard boundedness condition on
the kernel function---namely, there exists some finite $\kappa > 0$ such
that 
\begin{align}
\label{EqnKerBound}  
\sup_{x \in \Xspace} \KerFun(x,x) & \leq \kappa^2.
\end{align}
Note that any continuous kernel over a compact domain satisfies this
condition.  Moreover, a variety of commonly used kernels, including
the Gaussian and Laplacian kernels, satisfy this condition over any
domain.


\section{Analysis for bounded likelihood ratios}
\label{SecLRBound}

We begin our analysis in the case of bounded likelihood ratios.  Our
first main result is to prove an upper bound on the performance of the
unweighted KRR estimate~\eqref{EqnUnweightedKRR}.  First, we prove a
family of upper bounds (Theorem~\ref{ThmKRR}) depending on the
regularization parameter $\lambda$.  By choosing $\lambda$ so as to
minimize this family of upper bounds, we obtain concrete results for
different classes of kernels (Corollary~\ref{CorKRR}).  We then turn
to the complementary question of lower bounds: in
Theorem~\ref{ThmLowerBound}, we prove a family of lower bounds that
establish that for covariate shift with $\LRBound$-bounded likelihood
ratios, the KRR estimator is minimax-optimal up to logarithmic factors
in the sample size.  This optimality guarantee is notable since it
applies to the unweighted estimator that does not involve full
knowledge of the likelihood ratio (apart from an upper bound).

In the absence of covariate shift, it is well-known that performing
empirical risk minimization with an explicit constraint on the
function also leads to minimax-optimal results.  Indeed, without
covariate shift, projecting an estimate onto a convex constraint set
containing the true function can never lead to a worse result.  In
Theorem~\ref{ThmConstrainedKRR}, we show that this natural property is
no longer true under covariate shift: performing empirical risk
minimization over the smallest Hilbert ball containing $\fstar$ can be
sub-optimal.  Optimal procedures---such as the regularized KRR
estimate---are actually operating over Hilbert balls with radius
substantially larger than the true norm $\|\fstar\|_\Hilbert$.

\subsection{Unweighted kernel ridge regression is near-optimal}
\label{SecUnweightedKRR}

We begin by deriving a family of upper bounds on the kernel ridge
regression estimator~\eqref{EqnUnweightedKRR} under covariate shift.
In conjunction with our later analysis, these bounds will establish
that the KRR estimate is minimax-optimal up to logarithmic factors for
covariate shift with bounded likelihood ratios.

\begin{theorem}
\label{ThmKRR}
Consider a covariate-shifted regression problem with likelihood ratio
that is $\LRBound$-bounded~\eqref{EqnLRBound} over a Hilbert space
with a $\kappa$-uniformly bounded kernel~\eqref{EqnKerBound}.  Then
for any $\lambda \geq 10 \kappa^2 / \numobs$, the KRR estimate
$\fhatkrr$ satisfies the bound
\begin{align}
\label{EqnKRRBound}
\| \fhatkrr - \TrueFun \|_{\Target}^2 \leq \underbrace{\vphantom{\sum_{j=1}^{\infty}} 4 \lambda
  \LRBound \radius^2}_{\Bias^2_\lambda(\LRBound)} + \underbrace{80 \sigma^2
  \LRBound \frac{\log \numobs}{\numobs} \sum_{j=1}^{\infty}
  \frac{\mu_j }{ \mu_j + \lambda \LRBound }}_{\MyVarFun}
\end{align}
with probability at least $1 - 28 \; \tfrac{\kappa^2}{ \lambda } e^{-\frac{
    \numobs \lambda }{16 \kappa^2}} - \frac{1}{\numobs^{10}}$. 
\end{theorem}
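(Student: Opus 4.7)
The plan is to bound $\| \fhatkrr - \fstar \|_{\Target}^2$ via the classical bias-variance decomposition for KRR, carried out directly in the target norm so as to recover the sharper effective dimension at scale $\lambda \LRBound$. Introduce the empirical covariance operator $\EmpCov_\numobs \coloneqq \tfrac{1}{\numobs}\sum_{i=1}^{\numobs} \KerFun(\cdot, x_i) \otimes \KerFun(\cdot, x_i)$ on $\Hilbert$ and the population analogues $\PopCov_\Source, \PopCov_\Target$ under $\Source, \Target$. The reproducing property gives $\| f \|_{\Target}^2 = \langle f, \PopCov_\Target f \rangle_{\Hilbert}$, and the bound $q(x) \leq \LRBound \, p(x)$ translates to $\PopCov_\Target \preceq \LRBound \, \PopCov_\Source$ on $\Hilbert$. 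The KRR normal equations yield the closed form
\begin{equation*}
\fhatkrr - \fstar = - \lambda \, (\EmpCov_\numobs + \lambda \eye)^{-1} \fstar + (\EmpCov_\numobs + \lambda \eye)^{-1} \cdot \tfrac{1}{\numobs} \sum_{i=1}^{\numobs} w_i \, \KerFun(\cdot, x_i),
\end{equation*}
which I split into a bias term and a variance term. The main probabilistic ingredient is a noncommutative Bernstein inequality for $\EmpCov_\numobs - \PopCov_\Source$: under $\sup_x \KerFun(x,x) \leq \kappa^2$ and $\lambda \geq 10\kappa^2/\numobs$, one obtains $\tfrac{1}{2}(\PopCov_\Source + \lambda \eye) \preceq \EmpCov_\numobs + \lambda \eye \preceq \tfrac{3}{2}(\PopCov_\Source + \lambda \eye)$ on an event $\Event$ of probability at least $1 - 28(\kappa^2/\lambda) \exp(-\numobs \lambda/(16\kappa^2))$; the prefactor $\kappa^2/\lambda$ upper bounds the regularized effective rank $\trace(\PopCov_\Source (\PopCov_\Source + \lambda \eye)^{-1})$.

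For the \textbf{bias}, I would first apply $\| \cdot \|_{\Target}^2 \leq \LRBound \, \| \cdot \|_{\Source}^2$ and the identity $\| g \|_{\Source}^2 = \| \PopCov_\Source^{1/2} g \|_\Hilbert^2$, giving $\|\mathrm{bias}\|_{\Target}^2 \leq \LRBound \lambda^2 \| \PopCov_\Source^{1/2}(\EmpCov_\numobs + \lambda \eye)^{-1} \fstar \|_\Hilbert^2$. On $\Event$, the relative concentration yields $(\EmpCov_\numobs + \lambda \eye)^{-1}\PopCov_\Source (\EmpCov_\numobs + \lambda \eye)^{-1} \preceq (\EmpCov_\numobs + \lambda \eye)^{-1}(\PopCov_\Source + \lambda \eye)(\EmpCov_\numobs + \lambda \eye)^{-1} \preceq 2 (\EmpCov_\numobs + \lambda \eye)^{-1}$, and combining with $(\EmpCov_\numobs + \lambda \eye)^{-1} \preceq \lambda^{-1}\eye$ produces the desired bound $\leq 4 \LRBound \lambda \|\fstar\|_\Hilbert^2$.

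The \textbf{variance} is the more delicate part. Conditional on the design, its expected $L^2(\Target)$-squared norm equals $(\sigma^2/\numobs)\,\trace(\PopCov_\Target (\EmpCov_\numobs + \lambda \eye)^{-1} \EmpCov_\numobs (\EmpCov_\numobs + \lambda \eye)^{-1})$, and the bound $\EmpCov_\numobs (\EmpCov_\numobs + \lambda \eye)^{-1} \preceq \eye$ reduces this to $(\sigma^2/\numobs)\,\trace(\PopCov_\Target (\EmpCov_\numobs + \lambda \eye)^{-1})$, at most $(2\sigma^2/\numobs)\,\trace(\PopCov_\Target (\PopCov_\Source + \lambda \eye)^{-1})$ on $\Event$. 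The crucial covariate-shift step is the operator inequality, obtained by inverting $\PopCov_\Source + \lambda \eye \succeq \LRBound^{-1}(\PopCov_\Target + \lambda \LRBound \eye)$,
\begin{equation*}
(\PopCov_\Source + \lambda \eye)^{-1} \preceq \LRBound \, (\PopCov_\Target + \lambda \LRBound \, \eye)^{-1},
\end{equation*}
which directly yields $\trace(\PopCov_\Target (\PopCov_\Source + \lambda \eye)^{-1}) \leq \LRBound \sum_{j=1}^\infty \mu_j/(\mu_j + \lambda \LRBound)$. A Hanson--Wright-type tail bound for the sub-Gaussian quadratic form $\|\mathrm{variance}\|_{\Target}^2$ then upgrades expectation to high probability at the cost of a $\log \numobs$ factor and contributes the $\numobs^{-10}$ term to the failure probability.

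The \textbf{main obstacle} is recovering the effective dimension at the sharper scale $\lambda \LRBound$ rather than the naive $\lambda$. A reductive chain---bound everything in the source norm first and invoke the classical KRR variance bound---produces $\sum_j \tilde\mu_j/(\tilde\mu_j + \lambda)$ in the source eigenvalues $\tilde\mu_j$, and these can be arbitrarily larger than $\sum_j \mu_j/(\mu_j + \lambda \LRBound)$ under $\PopCov_\Target \preceq \LRBound\PopCov_\Source$ alone. The sharp route keeps $\PopCov_\Target$ inside the trace throughout and applies the displayed operator inequality only at the final step, simultaneously extracting a single factor of $\LRBound$ outside the sum and identifying $\lambda \LRBound$ as the correct regularization scale inside. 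Maintaining the operator-concentration event $\Event$ consistently through both the bias and variance arguments, while carefully tracking all constants, constitutes the remaining bookkeeping.
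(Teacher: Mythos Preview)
Your proposal is correct and follows essentially the same route as the paper. The only difference is representational: the paper works in the sequence space $\ell^2(\mathbb{N})$ via the Mercer basis $\{\phi_j\}$ orthonormal in $L^2(\Target)$---so that the target covariance becomes the identity and your operator inequality $\PopCov_\Source \succeq \LRBound^{-1}\PopCov_\Target$ reads as $\PopCov_\Source \succeq \LRBound^{-1}\IdMat$---whereas you carry out the identical argument intrinsically with integral operators on $\Hilbert$; the concentration event, the bias bound, the trace manipulation for the variance, and the final Hanson--Wright step all match one-to-one.
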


\noindent See Section~\ref{sec:proof-KRR} for the proof of this theorem.
In Appendix~\ref{sec:cor-KRR-bounded-upper}, we also present a corollary
which provides a corresponding expectation bound for the KRR estimator $\fhatkrr$
for such $\LRBound$-bounded covariate shifts. 

\medskip 
Note that the upper bound~\eqref{EqnKRRBound} involves two terms. The
first term $\Bias^2_\lambda(\LRBound)$ corresponds to the squared bias
of the KRR estimate, and it grows proportionally with the
regularization parameter $\lambda$ and the likelihood ratio bound
$\LRBound$. The second term $\MyVarFun$ represents the variance of the
KRR estimator, and it shrinks as $\lambda$ increases, so that
$\lambda$ controls the bias-variance trade-off.  This type of
trade-off is standard in nonparametric regression; what is novel
of interest to us here is how the shapes of these trade-off curves
change as a function of the likelihood ratio bound $\LRBound$.

Figure~\ref{FigTradeOff} plots the right-hand side of the upper
bound~\eqref{EqnKRRBound} as a function of $\lambda$ for several
different choices $\LRBound \in \{1, 5, 10, 15 \}$.  (In all cases, we
fixed a kernel with eigenvalues decaying as $\mu_j = j^{-2}$, sample
size $\numobs = 8000$, and noise variance $\sigma^2 = 1$.)  Of
interest to us is the choice $\lambda^*(\LRBound)$ that minimizes this
\begin{figure}[t]
  \begin{center}
    \widgraph{0.5\textwidth}{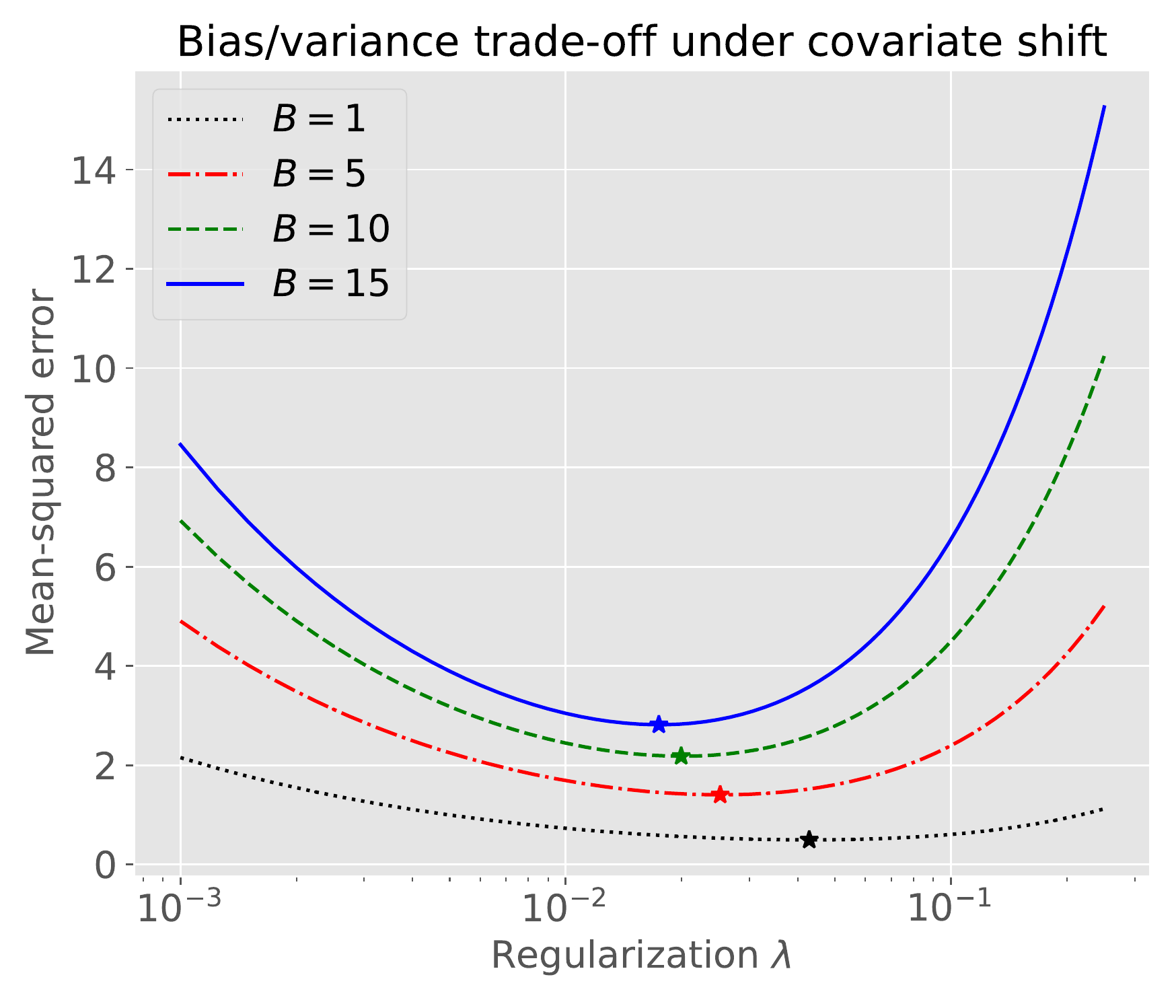}
    \caption{Plot of the upper bound~\eqref{EqnKRRBound} on the
      mean-squared error versus the log regularization parameter $\log
      \lambda$ for four different choices of the likelihood ratio
      bound $\LRBound$, in all cases with eigenvalues $\mu_j =
      (1/j)^2$, noise variance $\sigma^2 = 1$ and sample size $\numobs
      = 8000$.  The points marked with $\star$ on each curve
      corresponds to the choice of $\lambda^*(\LRBound)$ that
      minimizes the upper bound. Note how this minimizing value shifts
      to the left as $\LRBound$ increases above the standard problem
      without covariate shift ($\LRBound = 1$).\label{FigTradeOff}}
  \end{center}
  
\end{figure}
upper bound;  note how this optimizing $\lambda^*(\LRBound)$
shifts leftwards to smaller values as $\LRBound$ is increased.

We would like to understand the balancing procedure that leads to an
optimal $\lambda^*(\LRBound)$ in analytical terms.  This balancing
procedure is most easily understood for kernels with \emph{regular
eigenvalues}, a notion introduced in past
work~\cite{yang2017randomized} on kernel ridge regression.  For a
given targeted error level $\delta > 0$, it is natural to consider the
first index $d(\delta)$ for which the associated eigenvalue drops
below $\delta^2$---that is, $d(\delta) \coloneqq \min \{ j \geq 1 \mid
\mu_j \leq \delta^2 \}$.  The eigenvalue sequence is said to be
regular if\footnote{In fact, we can relax this to only require the minimizing 
$\delta$ in equation~\eqref{EqnKRRBoundRegular} to obey the tail bound.}
\begin{align}
  \label{EqnRegular}
  \sum_{j = d(\delta) +1}^\infty \mu_j & \leq c \, d(\delta) \delta^2
\end{align}
holds for some universal constant $c>0$. 
The class of kernels with regular eigenvalues includes kernels of finite-rank and those
with various forms of polynomial or exponential decay in their
eigenvalues; all are widely used in practice. 
For kernels with regular eigenvalues, the
bound~\eqref{EqnKRRBound} implies that there is a universal constant
$c'$ such that
\begin{align}
\label{EqnKRRBoundRegular}
\| \fhatkrr - \TrueFun \|_{\Target}^2 \leq c' \Big \{
\delta^2 \radius^2 +
\sigma^2 \LRBound \frac{d(\delta) \log \numobs}{\numobs} \Big \}
\qquad \mbox{where $\delta^2 = \lambda \LRBound$.}
\end{align}
We verify this claim as part of proving Corollary~\ref{CorKRR} below.

This bound~\eqref{EqnKRRBoundRegular} enables us to make (near)-optimal
choices of $\delta$---and hence $\lambda = \delta^2/\LRBound$.  Let us
summarize the outcome of this procedure for a few kernels of interest.
In particular, we say that a kernel has \emph{finite rank $\rank$} if
the eigenvalues $\mu_{j} = 0$ for all $j > \rank$.  The kernels that
underlie linear regression and polynomial regression more generally
are of this type.  A richer family of kernels has eigenvalues that 
exhibit \emph{$\alpha$-polynomial decay} $\mu_j \leq
c \, j^{-2 \alpha}$ for some $\alpha > 1/2$.  This kind of eigenvalue
decay is seen in various types of spline and Sobolev kernels, as well
as the Laplacian kernel.  It is easy to verify that both of these
families have regular eigenvalues. To simplify the presentation, 
we assume $\radius = 1$. 

\begin{corollary}[Bounds for specific kernels]
\label{CorKRR}
\begin{enumerate}
\item[(a)] For a kernel with rank $\rank$, as long as $\sigma^2 \rank \log \numobs 
\geq 10 \kappa^2$, the choice $\lambda =
  \frac{\sigma^2 \rank \log \numobs}{\numobs}$ yields an estimate
  $\fhatkrr$ such that
\begin{subequations}  
  \begin{align}
    \label{EqnFiniteUpper}
    \| \fhatkrr - \TrueFun \|_{\Target}^2 \leq c \sigma^2 \LRBound
    \frac{\rank \log \numobs}{\numobs}
\end{align}
with high probability.
\item[(b)] For a kernel with $\alpha$-decaying eigenvalues, suppose 
that $\sigma^2$ is sufficiently large so that  
  $\lambda = \linebreak \LRBound^{- \frac{1}{2 \alpha+1}} \; \big(\frac{\sigma^2
  \log \numobs}{\numobs} \big)^{\frac{2 \alpha}{2 \alpha + 1}} \geq 
  10 \kappa^2 / \numobs$. 
  Then the estimate $\fhatkrr$ obeys
  \begin{align}
\label{EqnAlphaUpper}    
\| \fhatkrr - \TrueFun \|_{\Target}^2 \leq c \Big( \frac{\sigma^2
  \LRBound \log \numobs}{\numobs} \Big)^{\frac{2 \alpha}{2 \alpha +
    1}}
  \end{align}
\end{subequations}
  with high probability.
\end{enumerate}
\end{corollary}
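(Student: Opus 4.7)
The plan is to derive Corollary~\ref{CorKRR} as a consequence of Theorem~\ref{ThmKRR} and the regularity condition~\eqref{EqnRegular}. The first step is to verify the intermediate reformulation~\eqref{EqnKRRBoundRegular}. For any target level $\delta > 0$, I would split the variance sum in~\eqref{EqnKRRBound} at the critical index $d(\delta)$: for $j \leq d(\delta)$, each term $\mu_j/(\mu_j + \lambda \LRBound)$ is trivially bounded by $1$, contributing at most $d(\delta)$; for $j > d(\delta)$, we have $\mu_j \leq \delta^2 = \lambda \LRBound$, so $\mu_j/(\mu_j + \lambda \LRBound) \leq \mu_j/\delta^2$, and the regularity condition~\eqref{EqnRegular} yields a further contribution of at most $c\, d(\delta)$. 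Combined with the bias term $4\lambda \LRBound \radius^2 = 4 \delta^2 \radius^2$, this establishes~\eqref{EqnKRRBoundRegular}.

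For part (a), since $\mu_j = 0$ for all $j > \rank$, the variance sum is bounded by $\rank$ directly without even invoking the regularity condition---equivalently, $d(\delta) \leq \rank$ for every $\delta > 0$. Substituting into Theorem~\ref{ThmKRR} yields a bound of the form $4 \lambda \LRBound + c\, \sigma^2 \LRBound \rank \log \numobs/\numobs$, and the prescribed choice $\lambda = \sigma^2 \rank \log \numobs/\numobs$ makes the bias term no larger than the variance term (up to constants), giving~\eqref{EqnFiniteUpper}. The hypothesis $\sigma^2 \rank \log \numobs \geq 10 \kappa^2$ is precisely what is required to invoke Theorem~\ref{ThmKRR}'s assumption $\lambda \geq 10 \kappa^2/\numobs$.

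For part (b), I would first verify that $\alpha$-polynomially-decaying eigenvalues satisfy the regularity condition. Indeed, $\mu_j \leq c j^{-2\alpha}$ implies $d(\delta) \lesssim \delta^{-1/\alpha}$, and the tail sum $\sum_{j > d(\delta)} \mu_j$ is of order $d(\delta)^{1-2\alpha} \asymp d(\delta) \cdot \delta^2$, confirming~\eqref{EqnRegular}. Plugging $d(\delta) \lesssim \delta^{-1/\alpha}$ into~\eqref{EqnKRRBoundRegular} gives the composite bound $\delta^2 + c\, \sigma^2 \LRBound \delta^{-1/\alpha} \log \numobs/\numobs$, which is minimized by $\delta^{2 + 1/\alpha} \asymp \sigma^2 \LRBound \log \numobs/\numobs$. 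Solving gives $\delta^2 \asymp (\sigma^2 \LRBound \log \numobs/\numobs)^{2\alpha/(2\alpha+1)}$, which translates via $\lambda = \delta^2/\LRBound$ to the prescribed regularization parameter and yields the bound~\eqref{EqnAlphaUpper}. The technical condition $\lambda \geq 10 \kappa^2/\numobs$ is explicitly hypothesized.

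The only subtle step is the split-and-bound argument for the effective dimension sum; beyond that, the corollary is essentially algebraic bookkeeping of Theorem~\ref{ThmKRR}. I do not anticipate a serious obstacle, though care is needed to ensure the regularization hypothesis $\lambda \geq 10 \kappa^2/\numobs$ of Theorem~\ref{ThmKRR} is met, which explains the side conditions on $\sigma^2$ in the corollary's statement.
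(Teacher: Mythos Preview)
Your proposal is correct and follows essentially the same approach as the paper: split the variance sum at $d(\delta)$, bound the leading terms by $1$ and the tail via the regularity condition~\eqref{EqnRegular}, then specialize to finite-rank and $\alpha$-polynomial decay with the same choices of $\lambda$. The only minor addition is that you explicitly verify the regularity condition for $\alpha$-decaying eigenvalues, which the paper simply asserts earlier in the text.
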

\begin{proof}
We begin by proving the upper bound~\eqref{EqnKRRBoundRegular}.  With
the shorthand $\delta^2 = \lambda \LRBound$, the variance term in our
bound~\eqref{EqnKRRBound} can be bounded as
\begin{align*}
\frac{1}{80} \MyVarFun = \sigma^2 \LRBound \frac{\log
  \numobs}{\numobs} \sum_{j=1}^{\infty} \frac{\mu_j }{\mu_j +
  \delta^2} & \leq \sigma^2 \LRBound \frac{\log \numobs}{\numobs} \Big
\{ \sum_{j=1}^{d(\delta)} 1 + \sum_{j > d(\delta)}^\infty
\frac{\mu_j}{\mu_j + \delta^2} \Big \},
\end{align*}
where, by the definition of $d(\delta)$, we have split the eigenvalues
into those that are larger than $\delta^2$, and those that are smaller
than $\delta^2$.  By the definition of a regular
kernel, the second term can be upper bounded
\begin{align*}
\sum_{j > d(\delta)}^\infty \frac{\mu_j}{\mu_j + \delta^2} & \leq
\frac{1}{\delta^2} c' \, d(\delta) \delta^2 \; = \; c' \, d(\delta).
\end{align*}
Putting together the pieces yields $\frac{1}{80} \MyVarFun \leq c_2
\sigma^2 \LRBound \frac{\log \numobs}{\numobs} d(\delta)$, for some
universal constant $c_2$.  Combining with the bias term yields the
claim~\eqref{EqnKRRBoundRegular}.

We now prove claims~\eqref{EqnFiniteUpper} and~\eqref{EqnAlphaUpper}.
For a finite-rank kernel, using the fact that $d(\delta) \leq \rank$ for any 
$\delta > 0$, we can set $\lambda = \frac{\sigma^2 \rank \log
  \numobs}{\numobs}$ to obtain the claimed bound~\eqref{EqnFiniteUpper}.
Now suppose that the kernel has $\alpha$-polynomial decay---that is,
$\mu_j \leq c j^{-2 \alpha}$ for some $c > 0$.  For any $\delta > 0$,
we then have $d(\delta) \leq c' \, (1/\delta)^{1/\alpha}$, and hence
\begin{align*}
\delta^2 + \sigma^2 \LRBound \frac{d(\delta) \log \numobs}{\numobs} &
\leq \delta^2 + c ' \sigma^2 \LRBound \frac{\log \numobs}{\numobs}
\big(\frac{1}{\delta}\big)^{1/\alpha}.
\end{align*}
By equating the two terms, we can solve for near-optimal $\delta$: in
particular, we set $\delta^2 = \Big(\frac{\sigma^2 \LRBound \log
  \numobs}{\numobs} \Big)^{\frac{2 \alpha}{2 \alpha + 1}}$ to obtain
the claimed result.  Notice that this choice of $\delta^2$ corresponds
to
\begin{align*}
  \lambda = \delta^2/\LRBound \; = \; \LRBound^{- \frac{1}{2
      \alpha+1}} \; \big(\frac{\sigma^2 \log \numobs}{\numobs}
  \big)^{\frac{2 \alpha}{2 \alpha + 1}},
\end{align*}
as claimed in the corollary.
\end{proof}


\subsection{Lower bounds with covariate shift for regular kernels}

Thus far, we have established a family of upper bounds on the
unweighted KRR estimate, and derived concrete results for various
classes of regular kernels.  We now establish that, for the class of
regular eigenvalues, the bounds achieved by the unweighted KRR
estimator are minimax-optimal.  Recall the definition $d(\delta)
= \min \{ j \geq 1 \mid \mu_j \leq \delta^2 \}$, and the
notion of regular eigenvalues~\eqref{EqnRegular}.  For a Hilbert space
$\Hilbert$, we let $\Ball_\Hilbert(1)$ denote the Hilbert norm ball of
radius one.
\begin{theorem}
\label{ThmLowerBound}
For any $\LRBound \geq 1$, there exists a pair $(\Source, \Target)$
with $\LRBound$-bounded likelihood ratio~\eqref{EqnLRBound} and an
orthonormal basis $\{ \phi_j \}_{j \geq 1}$ of $L^2(\Target)$ such
that for any regular sequence of kernel eigenvalues $\{\mu_j\}_{j \geq
  1}$, we have
\begin{align}
\label{EqnLowerBound}  
\inf_{\fhat} \sup_{ \TrueFun \in \Ball_\Hilbert(1) } \Exp [ \| \fhat -
  \TrueFun \|_{\Target}^{2} ] & \geq c \; \inf_{\delta > 0} \Big \{
\delta^2 + \sigma^2 \LRBound \frac{d(\delta)}{\numobs} \Big \}.
\end{align}
\end{theorem}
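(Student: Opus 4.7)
My plan is to apply Fano's method to a carefully constructed hard subfamily whose signals are concentrated on the region where the likelihood ratio saturates the bound $\LRBound$; this localization is the mechanism that reduces the \emph{effective} sample size from $\numobs$ to $\numobs/\LRBound$ and drives the lower bound. \textbf{Step 1 (construction of $(\Source, \Target)$ and basis).} Take $\Xspace = [0,1]$, let $\Target$ be uniform on $\Xspace$, and let $S = [0,1/2]$. Define the source density by $p(x) = 1/\LRBound$ on $S$ and $p(x) = 2 - 1/\LRBound$ on $\Xspace \setminus S$; one checks that $p$ integrates to one and that $\likeratio = q/p$ equals $\LRBound$ on $S$ while being at most $1$ elsewhere, so the pair is $\LRBound$-bounded. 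For the prescribed regular eigenvalue sequence $\{\mu_j\}_{j \geq 1}$, construct an orthonormal basis $\{\phi_j\}_{j \geq 1}$ of $L^2(\Target)$ (for instance, by rescaling a trigonometric basis on the half-interval) such that $\phi_j$ is supported in $S$ for all indices $j$ we will use in Step 2; the kernel $\KerFun(x,x') = \sum_{j \geq 1} \mu_j \phi_j(x) \phi_j(x')$ then has exactly the prescribed Mercer expansion in $L^2(\Target)$. The identity that will drive the argument is that any $f$ supported in $S$ satisfies $\|f\|_\Source^2 = \LRBound^{-1} \|f\|_\Target^2$.

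\textbf{Step 2 (hard packing inside $\Ball_\Hilbert(1)$).} Let $d = d(\delta)$ and consider the hypercube ensemble $f_\epsilon = (\delta/\sqrt{d}) \sum_{j=1}^d \epsilon_j \phi_j$ for $\epsilon \in \{-1,+1\}^d$. Since $\mu_j \geq \delta^2$ for $j \leq d$ by the definition of $d(\delta)$, we have $\|f_\epsilon\|_\Hilbert^2 = (\delta^2/d) \sum_{j=1}^d \mu_j^{-1} \leq 1$, so each $f_\epsilon \in \Ball_\Hilbert(1)$, and each $f_\epsilon$ is supported on $S$. A standard Varshamov--Gilbert argument then extracts a sub-collection $\{f_{\epsilon^{(1)}}, \ldots, f_{\epsilon^{(M)}}\}$ of cardinality $M \geq 2^{d/8}$ such that $\|f_{\epsilon^{(i)}} - f_{\epsilon^{(j)}}\|_\Target^2 \geq \delta^2$ for all $i \neq j$, while always bounded above by $4\delta^2$.

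\textbf{Step 3 (Fano and balance).} Specializing to Gaussian noise with variance $\sigma^2$ (a valid sub-Gaussian instance), the product-measure KL between the observation laws $\Prob_{f_{\epsilon^{(i)}}}^{\otimes \numobs}$ and $\Prob_{f_{\epsilon^{(j)}}}^{\otimes \numobs}$ equals $(\numobs/2\sigma^2)\|f_{\epsilon^{(i)}} - f_{\epsilon^{(j)}}\|_\Source^2$. By the key identity from Step 1 together with the $L^2(\Target)$ upper bound $4\delta^2$ on $S$-supported differences, this KL is at most $2\numobs \delta^2 / (\sigma^2 \LRBound)$. Applying the standard Fano inequality to the packing of size $M \geq 2^{d(\delta)/8}$ yields $\inf_{\fhat}\sup_{\TrueFun \in \Ball_\Hilbert(1)} \Exp\|\fhat - \TrueFun\|_\Target^2 \geq c\delta^2$ whenever $\numobs \delta^2 / (\sigma^2 \LRBound) \leq c_0\, d(\delta)$ for a small absolute constant $c_0$. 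Evaluating at the balance point $\delta^2 \asymp \sigma^2 \LRBound\, d(\delta)/\numobs$ then delivers the infimum form~\eqref{EqnLowerBound}.

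The hard part is Step 1: one must simultaneously realize the prescribed regular eigenvalue sequence, force the top $d(\delta)$ eigenfunctions to concentrate on the likelihood-ratio-saturating region $S$, and retain a bona fide RKHS structure on $\Xspace$ with a convergent Mercer expansion. Once this construction is in place, Steps 2 and 3 are direct applications of the standard RKHS Fano machinery (cf.~\cite{wainwright2019high}), with the covariate-shift factor $\LRBound$ entering only through the identity $\|f\|_\Source^2 = \LRBound^{-1}\|f\|_\Target^2$ valid on $S$.
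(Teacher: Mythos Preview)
Your approach is correct and essentially identical to the paper's: both apply Fano's method to a packing inside $\Ball_\Hilbert(1)$, with the covariate-shift factor entering via a construction guaranteeing $\|f\|_\Source^2 = \LRBound^{-1}\|f\|_\Target^2$ on the packing elements (the paper uses $\{\pm 1\}^\infty$ with coordinate eigenfunctions $\phi_j(x)=x_j$ and a source that places mass $1-1/\LRBound$ at the origin, which yields exactly the same norm identity as your half-interval construction and shows that your Step~1 is in fact no harder than the rest). One small slip: since $d(\delta)=\min\{j:\mu_j\le\delta^2\}$ gives $\mu_{d(\delta)}\le\delta^2$ rather than $\ge$, your Hilbert-norm check in Step~2 should sum only over $j\le d(\delta)-1$, a cosmetic adjustment.
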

\noindent See Appendix~\ref{sec:proof-lower-bound} for the proof of
this claim. \\


Comparing the lower bound~\eqref{EqnLowerBound} to our achievable
result~\eqref{EqnKRRBoundRegular} for the unweighted KRR estimate, we
see that---with an appropriate choice of the regularization parameter
$\lambda$---the KRR estimator is minimax optimal up to a $\log
\numobs$ term.  In particular, it is straightforward to derive the
following consequences of Theorem~\ref{ThmLowerBound}, which parallel
the guarantees in Corollary~\ref{CorKRR}:
\bcar
\item For a finite-rank kernel, the minimax risk for
  $\LRBound$-bounded covariate shift satisfies the lower bound
  \begin{align*}
\inf_{\fhat} \sup_{ \TrueFun \in \Ball_\Hilbert(1) } \Exp [ \| \fhat -
  \TrueFun \|_{\Target}^{2} ] & \geq c \; \sigma^2 \LRBound
\frac{\rank}{\numobs}.
  \end{align*}
\item For a kernel with $\alpha$-polynomial eigenvalues, the
  minimax risk for $\LRBound$-bounded covariate shift satisfies
  the lower bound
  \begin{align*}
\inf_{\fhat} \sup_{ \TrueFun \in \Ball_\Hilbert(1) } \Exp [ \| \fhat -
  \TrueFun \|_{\Target}^{2} ] & \geq c \; \Big( \frac{\sigma^2
  \LRBound}{\numobs} \Big)^{\frac{2 \alpha}{2 \alpha + 1}}.
  \end{align*}
  \ecar
Note that both of these minimax lower bounds reduce to the known lower
bounds~\cite{yang2017randomized} in the case of no covariate shift
(i.e., $\LRBound = 1$).



\subsection{Constrained kernel regression is sub-optimal}

In the absence of covariate shift, procedures based on empirical risk
minimization with explicit constraints are also known to be
minimax-optimal.  In the current setting, one such estimator is the
constrained kernel regression estimate 
\begin{align}
\label{EqnConstrainedKRRForm}
\fhaterm \coloneqq \arg \min_{ f \in \Ball_\Hilbert(1) } \Big \{ \frac{1}{
  \numobs } \sum_{i=1}^\numobs (f(x_i) - y_i)^2 \Big \}.
\end{align}
Without covariate shift and for any regular kernel, this constrained
empirical risk minimization procedure is minimax-optimal over all
functions $\fstar$ with $\|\fstar\|_\Hilbert \leq 1$.

In the presence of covariate shift, this minimax-optimality turns out
to be false.  In particular, suppose that the eigenvalues decay as
$\mu_j = (1/j)^2$, so that our previous results show that the minimax
risk for $\LRBound$-bounded likelihood ratios scales as
$\big(\frac{\LRBound \sigma^2}{\numobs} \big)^{2/3}$.  It turns out
that there exists $\LRBound$-bounded pair $(\Source, \Target)$ and an
associated kernel class with the prescribed eigendecay for which the
constrained estimator~\eqref{EqnConstrainedKRRForm} is sub-optimal for a
broad range of $(\LRBound, \numobs)$ pairs.  In the following
statement, we use $c_1, c_2$ to denote universal
constants.
\begin{theorem}
  \label{ThmConstrainedKRR}
Assume that $\radius = 1$, and that $\sigma^2 =1$. 
For any $\LRBound \in [c_1 (\log \numobs)^2, c_2 \numobs^{2/3}]$,
there exists a $\LRBound$-bounded pair $(\Source, \Target)$ and RKHS
with eigenvalues $\mu_j \leq (1/j^2)$ such that
\begin{align}
\label{EqnConstrainedKRR}  
\sup_{\TrueFun \in \Ball_\Hilbert(1)} \Exp \Big[\| \fhaterm -
  \TrueFun \|_{\Target}^{2}\Big] \geq c_3 \,
\frac{\LRBound^{3}}{\numobs^{2}}.
 \end{align}
\end{theorem}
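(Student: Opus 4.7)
The plan is to exhibit an explicit adversarial instance on which the constrained estimator $\fhaterm$ provably attains $Q$-error of order $B^3/n^2$. Since Theorem~\ref{ThmLowerBound} already gives a minimax lower bound of $(B/n)^{2/3}$, the novel content here is to establish that $\fhaterm$ underperforms the KRR estimator of Theorem~\ref{ThmKRR} on a specific instance. The heuristic behind the construction is that the ERM's effective regularization parameter is pinned by the constraint $\|\fhaterm\|_\Hilbert \le 1$ to a value governed by the source-side geometry of the data, which under severe covariate shift is much larger than the shift-optimal $\lambda \asymp B^{-1/3}(\sigma^2/n)^{2/3}$ identified in Corollary~\ref{CorKRR}; this forces the ERM to use the ``wrong'' amount of shrinkage.

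I would work in a sequence-space model that makes the computations tractable. Take $\Xspace = \Natural$ and target measure $q_j = c/j^2$ (with $c$ a normalizing constant), and define the kernel via its Mercer expansion in the $Q$-orthonormal basis $\phi_j(k) = \delta_{jk}/\sqrt{q_j}$ with eigenvalues $\mu_j = 1/j^2$, so that $\KerFun(j,j) = \mu_j/q_j \equiv 1/c$ is uniformly bounded. Choose the source distribution $P$ to match $Q$ on a bulk set of indices, but to rescale $p_j = q_j/B$ on a carefully selected ``hidden'' subset $S \subset \Natural$, where the likelihood ratio saturates at $B$. The hard regression function $\fstar$ is taken to be supported on the smallest index $j^\star \in S$ for which $n p_{j^\star}$ is of order one; normalizing $\fstar = \sqrt{\mu_{j^\star}}\,\phi_{j^\star}$ places it on the boundary of $\Ball_\Hilbert(1)$.

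In this decoupled setup the ERM admits the closed form
\[
\fhaterm(j) \;=\; \frac{n_j\,\bar{y}_j}{n_j + \lambda\, q_j/\mu_j},
\]
where $n_j$ and $\bar{y}_j$ denote the per-index sample count and mean, and $\lambda \ge 0$ is the smallest multiplier making the Hilbert-norm constraint $\sum_j \fhaterm(j)^2 q_j/\mu_j = 1$ active. Solving the constraint equation in expectation identifies $\lambda$ as being of the order the ERM ``would have'' used in the absence of shift, which in the notation of Corollary~\ref{CorKRR} is too large to resolve $\phi_{j^\star}$. Because $\lambda q_{j^\star}/\mu_{j^\star}$ then dominates $n_{j^\star} = O(1)$, the ERM's coefficient at $j^\star$ is shrunk almost to zero, producing a bias of essentially $\theta^\star_{j^\star}$; translating this (and the contributions from other hidden indices in $S$) back to the $L^2(Q)$-norm yields a lower bound of the claimed order $B^3/n^2$.

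The main obstacle is to carry out this bias--variance bookkeeping carefully enough that the final rate is $B^3/n^2$ rather than the naive $B/n$ from a single-coordinate argument. The two ingredients are: (i) a concentration argument showing that the random $\lambda$ determined by the (stochastic) constraint equation lies in the predicted range with constant probability, and (ii) a correct choice of $S$ that aggregates enough hidden directions to compound the single-index bias, while still respecting the global constraint $\|\fstar\|_\Hilbert \le 1$. An alternative route, which may simplify the bookkeeping, is to compare $\fhaterm$ directly to the shift-optimal KRR estimator on the same data, bounding their difference in $L^2(Q)$ by the discrepancy between the two regularization levels $\lambda_{\mathrm{ERM}}$ and $\lambda^\star_{\mathrm{KRR}}$.
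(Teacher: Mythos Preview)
Your construction and mechanism differ fundamentally from the paper's, and as written the proposal has a gap that your suggested fixes do not close.

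The paper places the hard function at the \emph{top} of the spectrum: on the hypercube $\{-1,+1\}^\numobs$ with $\phi_j(x)=x_j$ and $\mu_j=1/j^2$, it takes $\fstar(x)=x_1$, so that $\|\fstar\|_\Target=\|\fstar\|_\Hilbert=1$. Only the first coordinate is hidden (it equals zero with probability $1-1/\LRBound$ under $\Source$); all directions $j\ge 2$ are fully observed. The mechanism is \emph{not} over-shrinkage of a poorly observed small-eigenvalue direction, but competition for the Hilbert-norm budget. Fixing $\theta_1=t$ and optimizing the empirical objective over the remaining coordinates defines a function $g(t)$; the paper shows (i) $\inf_t g(t)\le -c^\star\sqrt{\LRBound}/\numobs$ by evaluating $g$ at $t=\sqrt{1-\LRBound^{3/2}/\numobs}$, where the freed budget $1-t^2=\LRBound^{3/2}/\numobs$ lets the ERM harvest a noise-fitting gain of order $\sqrt{\LRBound}/\numobs$ in the bulk coordinates (via a Lagrangian computation and the effective-dimension sum $\sum_{j\ge2} v_j^2/(1+\xi/(\LRBound\mu_j))\asymp\sqrt{\LRBound/\xi}/\numobs$), and (ii) $g(t)>-c^\star\sqrt{\LRBound}/\numobs$ whenever $(1-t)^2\le c_3 \LRBound^3/\numobs^2$, because too little budget is released. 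Together these force $(1-\hat\theta_1)^2\gtrsim \LRBound^3/\numobs^2$ with constant probability.

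Your single-index construction places $\fstar=\sqrt{\mu_{j^\star}}\,\phi_{j^\star}$ at an index with $\mu_{j^\star}\sim \LRBound/\numobs$, so $\|\fstar\|_\Target^2=\mu_{j^\star}\sim \LRBound/\numobs$. Even if the ERM returns identically zero at $j^\star$, the $L^2(\Target)$-error is at most $\LRBound/\numobs$, which is strictly smaller than $\LRBound^3/\numobs^2$ once $\LRBound>\numobs^{1/2}$---and the theorem must cover $\LRBound$ up to $c_2\numobs^{2/3}$. Your remedy of aggregating over a hidden set $S$ cannot recover the deficit: if every nonzero coordinate of $\fstar$ sits at indices with $\mu_j\le\mu_{j^\star}$, then $\|\fstar\|_\Target^2=\sum_j\theta_j^2\le\mu_{j^\star}\sum_j\theta_j^2/\mu_j\le\mu_{j^\star}$, the same cap; and the variance contributions from other indices in $S$ (where $\fstar$ vanishes) are suppressed by the very $\lambda$ you argue is large. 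The missing idea is to put $\fstar$ where $\|\fstar\|_\Target$ can be of order one, and then show that the ERM is \emph{pulled away} from it by the incentive to spend constraint budget fitting noise in the many well-observed directions.
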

\noindent See Appendix~\ref{sec:proof-failure} for the proof of this
negative result. \\

In order to appreciate some implications of this theorem, suppose that
we use it to construct ensembles with $\LRBound_\numobs \asymp
\numobs^{2/3}$.  The lower bound~\eqref{EqnConstrainedKRR} then
implies that over this sequence of problems,
the maximal risk of $\fhaterm$ is bounded below by a universal constant.  On the
other hand, if we apply the unweighted KRR procedure, then we obtain
consistent estimates, in particular with $L^2(\Target)$-error that
decays as
\begin{align*}
\Big( \frac{\LRBound_\numobs}{\numobs} \Big)^{2/3} & =
\Big(\frac{\numobs^{2/3}}{\numobs} \Big)^{2/3} \; = \; \numobs^{-2/9}.
\end{align*}

\medskip

It is worth understanding why the constrained form of KRR is
sub-optimal, while the regularized form is minimax-optimal.  Recall
from Corollary~\ref{CorKRR} that achieving minimax-optimal rates with
 KRR requires particular choices of the regularization
parameter $\lambda^*(\LRBound)$, ones that decrease as $\LRBound$
increases (see Figure~\ref{FigTradeOff}).  This behavior suggests that
the Hilbert norm $\|\fhatkrr\|_\Hilbert$ of the regularized KRR
estimate with optimal choice of $\lambda$ should grow significantly
above $\|\fstar\|_\Hilbert = 1$ when we apply this method.

In order to confirm this intuition, we performed some illustrative
simulations over the ensembles, indexed by the pair $(\LRBound,
\numobs)$, that underlie the proof of Theorem~\ref{ThmConstrainedKRR};
see Appendix~\ref{sec:proof-failure} for the details.  With $\sigma^2
= 1$ remaining fixed, for each given pair $(\LRBound, \numobs)$, we
simulated regularized kernel ridge regression with the choice $\lambda
= 4^{2/3} \numobs^{-2/3} \LRBound^{-1/3}$, as suggested by
Corollary~\ref{CorKRR}.
\begin{figure}[t]
\begin{center}
\widgraph{0.5\textwidth}{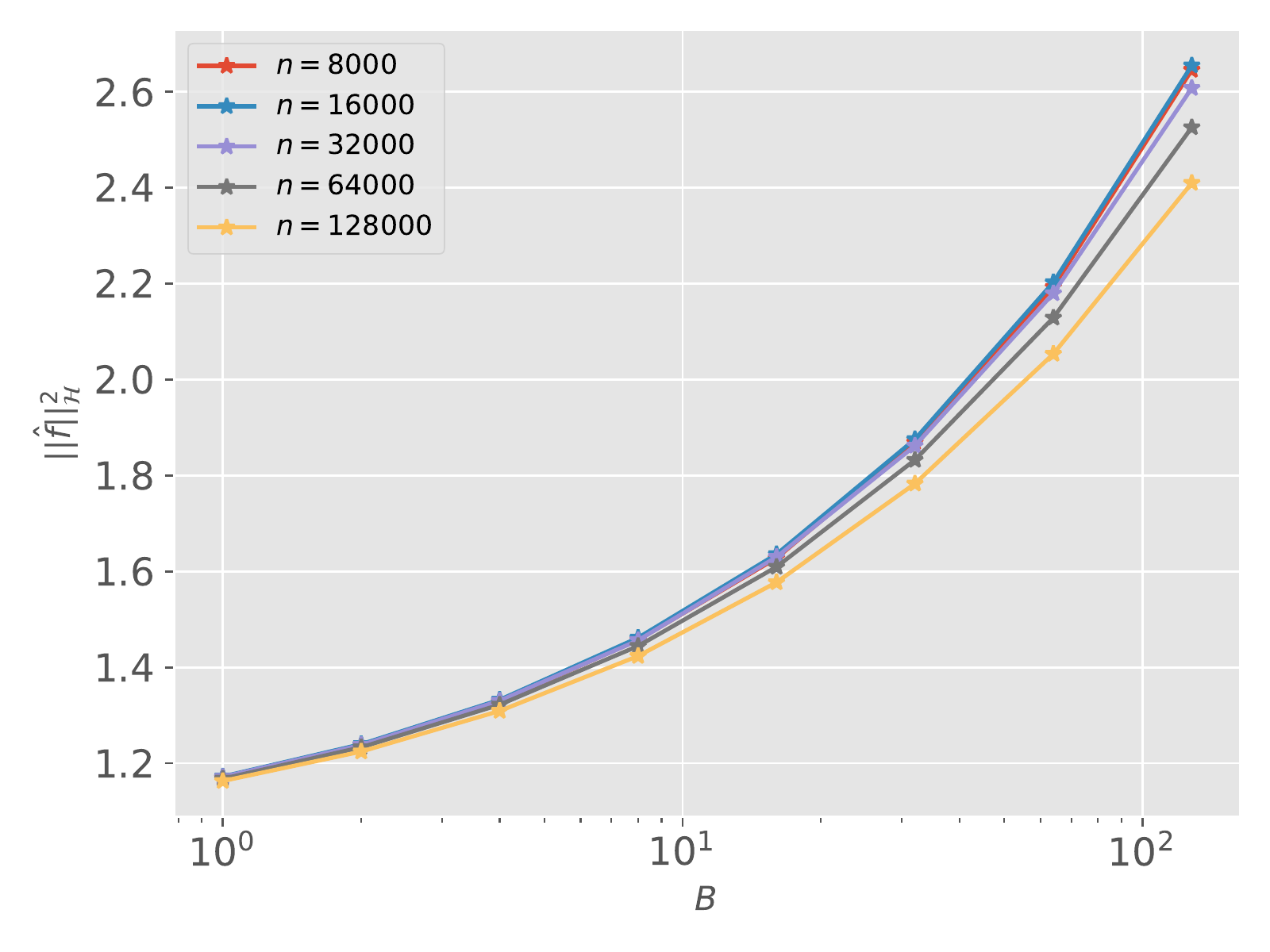}
\caption{Results based on computing the regularized KRR estimate for
  the ``bad'' problems, indexed by the pair $(\numobs, \LRBound)$,
  that underlie the proof of Theorem~\ref{ThmConstrainedKRR}.  Each
  curve shows the squared Hilbert norm of the regularized KRR estimate
  $\|\fhatkrr\|_\Hilbert^2$, computed with $\lambda = 4^{2/3}
  \numobs^{-2/3} \LRBound^{-1/3}$, versus the likelihood ratio bound
  $\LRBound$.  Each curve corresponds to a different choice of sample
  size $\numobs$ as indicated in the legend. }
\label{FigHilbertNorm}
\end{center}
\end{figure}
In Figure~\ref{FigHilbertNorm}, for each fixed $\numobs$, we plot the
squared Hilbert norm $\|\fhatkrr\|_\Hilbert^2$ of the regularized KRR
estimate versus the parameter $\LRBound$.  We vary the choice of
sample size $\numobs \in \{ 8000, 16000, 32000, 64000, 128000\}$, as
indicated in the figure legend.  In all of these curves, we see that
the squared Hilbert norm is increasing as a polynomial function of
$\LRBound$.  This behavior is to be expected, given the sub-optimality
of the constrained KRR estimate with a fixed radius.


\section{Unbounded likelihood ratios}
\label{SecLRMoment}

Thus far, our analysis imposed the $\LRBound$-bound~\eqref{EqnLRBound}
on the likelihood ratio.  In practice, however, it is often the case
that the likelihood ratio is unbounded.  As a simple univariate
example, suppose that the target distribution $\Target$ is standard
normal $\mathcal{N}(0,1)$, whereas the source distribution $\Source$
takes the form $\mathcal{N}(0, 0.9)$.  It is easy to see that the
likelihood ratio $\likeratio(x)$ tends to $\infty$ as $|x| \to
\infty$.  On the other hand, the second moment of the likelihood ratio
under $\Source$ remains bounded, so that
$\chi^2$-condition~\eqref{EqnLRMoment} applies.  

The key to the success of the \emph{unweighted} KRR estimator~\eqref{EqnUnweightedKRR} 
in the bounded likelihood ratio case is the nice relationship between the covariance 
$\PopCov_{\Source} \coloneqq
\Exp_{X \sim \Source} [\phi(X) \phi(X)^\top ]$
of the source distribution  and the covariance $\IdMat$ of the target distribution, 
namely $ \PopCov_{\Source} \succeq \tfrac{1}{\LRBound} \IdMat$. 
In contrast, such a nice relationship (with 
$\LRBound$ replaced by $\VarBound$) does
not appear to hold with unbounded likelihood ratios. It
is therefore natural to consider the likelihood-reweighted
estimate~\eqref{EqnIntroReweightedKRR}, as previously introduced in
Section~\ref{SecUnwRew}, that ensures the nice identity 
$\Exp_{X \sim \Source} [ \likeratio(X) \phi(X) \phi(X)^\top ] = \IdMat$.  
In contrast to the unweighted KRR estimator,
it requires knowledge of the likelihood ratio, but we will see
that---when combined with a suitable form of truncation---it achieves
minimax-optimal rates (up to logarithmic factors) over the much larger
classes of $\chi^2$-bounded source-target pairs.

As noted before, one concern with likelihood-reweighted estimators is
that they can lead to substantial inflation of the variance, in
particular due to the multiplication by the potentially unbounded
quantity $\likeratio(x)$.  For this reason, it is natural to consider
truncation: more precisely, for a given $\TruncLevel > 0$, we define
the \emph{truncated likelihood ratio}
\begin{align}
  \TruncLike{x} & \coloneqq \; \begin{cases}
    \likeratio(x), & \mbox{if $\likeratio(x) \leq \TruncLevel$}, \\
    \TruncLevel, & \mbox{otherwise.}
  \end{cases}
\end{align}
We then consider the family of estimators
\begin{align}
\label{EqnReweightedKRR}
\fhatrw \coloneqq \arg \min_{f \in \Hilbert} \Big \{ \frac{1}{ \numobs
} \sum_{i=1}^\numobs \TruncLike{x_i}(f(x_i) - y_i)^2 + \lambda
\|f\|_{\Hilbert}^2 \Big \},
\end{align}
where $\lambda > 0$, along with the truncation level $\TruncLevel$, are parameters
to be specified.

We analyze the behavior of this estimator for kernels whose
eigenfunctions are $1$-uniformly bounded in sup-norm, meaning that
\begin{align}
\label{EqnEigenBounded}
\|\phi_j\|_\infty \coloneqq \sup_{x \in \Xspace} |\phi_{j}(x)| & \leq
1 \quad \mbox{for all $j = 1, 2, \ldots$.}
\end{align}
Our choice of the constant $1$ is for notational simplicity.  Although
there exist kernels whose eigenfunctions are not uniformly bounded,
there are many kernels for which this condition does hold.  Whenever
the domain $\Xspace$ is compact and the eigenfunctions are continuous,
this condition will hold.  Another class of examples is given by
convolutional kernels (i.e., kernels of the form $\KerFun(x,z) = \Psi(x - z)$
for some $\Psi: \Xspace \rightarrow \real$), which have sinusoids as
their eigenfunctions, and thus satisfy this condition.

\newcommand{\KerComplex}{\ensuremath{\Psi}}

Our theorem on the truncated-reweighted KRR
estimate~\eqref{EqnReweightedKRR} involves the kernel complexity
function $\KerComplex(\delta, \mu) \coloneqq \sum_{j=1}^\infty \min
\{\delta^2, \mu_j  \radius^2 \}$, and works for any solution $\rcrit >
0 $ to the inequality $\Mfun(\delta) \leq  \delta^2  / 2$, where 
\begin{align}
\label{eq:M-function}  
\Mfun(\delta) & \coloneqq \Ccon \sqrt{ \tfrac{\sigma^2 \VarBound
    \log^3(\numobs)}{ \numobs} \KerComplex(\delta, \mu) }.
\end{align}
Here $\Ccon$ is a universal constant, whose value is specified via the
proof.

Below, we present the performance guarantee of $\fhatrw$ in the 
large noise regime (i.e., when $\sigma^2 \geq \kappa^2 \radius^2$) to simplify 
the statement. Theoretical 
guarantees for all ranges of $\sigma^2$ can be found in Appendix~\ref{SecGeneralNoise}. 

\begin{theorem}
  \label{ThmReweightedKRR}
Consider a kernel with sup-norm bounded
eigenfunctions~\eqref{EqnEigenBounded}, and a source-target pair with
$\Exs_\Source[\likeratio^2(X)] \leq V^2$.  
Further assume that the noise level obeys
$\sigma^2 \geq \kappa^2 \radius^2$. Then the estimate $\fhatrw$
with truncation $\TruncLevel = \sqrt{\numobs \VarBound}$ and
regularization $\lambda \radius^2 \geq \rcrit^2 / 3$ satisfies the bound
\begin{align}
\|\fhatrw - \TrueFun \|_{\Target}^2 \leq  \rcrit^2 + 3 \lambda \radius^2
\end{align}
with probability at least $1 - c \: \numobs^{-10}$.  Here, we recall that $\rcrit >
0 $ is 
any solution  to the inequality $\Mfun(\delta) \leq  \delta^2  / 2$, where 
\begin{align*}  
\Mfun(\delta) & = \Ccon \sqrt{ \tfrac{\sigma^2 \VarBound
    \log^3(\numobs)}{ \numobs} \KerComplex(\delta, \mu) }.
\end{align*}

\end{theorem}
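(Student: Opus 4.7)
My plan is to analyze $\fhatrw$ via the standard basic inequality for penalized M-estimators, but carefully accommodating (i)~the truncation of the likelihood ratio, and (ii)~the mismatch between the empirical $\TruncLike{\cdot}$-weighted quadratic form and the target $L^2(\Target)$-norm. Let $\Delta \coloneqq \fhatrw - \TrueFun$. Writing $y_i = \TrueFun(x_i) + \noise_i$ and using optimality of $\fhatrw$ versus $\TrueFun$ in the objective~\eqref{EqnReweightedKRR}, I obtain the basic inequality
\begin{align*}
\frac{1}{\numobs}\sum_{i=1}^\numobs \TruncLike{x_i}\,\Delta(x_i)^2 + \lambda \hilnorm{\fhatrw}^2
\;\leq\; \frac{2}{\numobs}\sum_{i=1}^\numobs \TruncLike{x_i}\,\noise_i\,\Delta(x_i) + \lambda\hilnorm{\TrueFun}^2.
\end{align*}
The next step is to relate the empirical quadratic form on the left to $\|\Delta\|_\Target^2$. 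By the truncation identity $\Exp_\Source[\TruncLike{X} \Delta(X)^2] = \Exp_\Target[\Delta(X)^2] - \Exp_\Source[(\likeratio(X) - \TruncLike{X})\Delta(X)^2]$; using $\TruncLevel = \sqrt{\numobs \VarBound}$ and Markov together with $\|\Delta\|_\infty \leq \kappa \hilnorm{\Delta}$, the truncation bias is negligible (of order $\VarBound/\TruncLevel = \sqrt{\VarBound/\numobs}$ times $\hilnorm{\Delta}^2$, which is absorbed by the regularizer).

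The heart of the argument is a uniform concentration bound on the two empirical processes
\begin{align*}
Z_1(g) \coloneqq \frac{1}{\numobs}\sum_{i=1}^\numobs \TruncLike{x_i}\noise_i g(x_i),
\qquad
Z_2(g) \coloneqq \frac{1}{\numobs}\sum_{i=1}^\numobs \TruncLike{x_i} g(x_i)^2 - \Exp_\Source[\TruncLike{X} g(X)^2],
\end{align*}
uniformly over $g$ lying in the localized set $\mathcal{F}(\delta,R) \coloneqq \{g \in \Hilbert : \|g\|_\Target \leq \delta,\ \hilnorm{g} \leq R\}$. For each fixed $g$, the summands are bounded by $\TruncLevel$ in magnitude and have variance controlled by $\Exp_\Source[\TruncLike{X}^2 g(X)^2] \leq \TruncLevel \Exp_\Target[g(X)^2] \leq \TruncLevel \delta^2$ (using $\TruncLike{\cdot} \leq \likeratio(\cdot)$ pointwise and then $\rho_\tau \leq \tau$), and I will apply Bernstein's inequality followed by chaining (or a Talagrand-style bound) to pass to uniformity. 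The localized Rademacher complexity is controlled by the kernel complexity function: by Cauchy--Schwarz with sup-norm bounded eigenfunctions~\eqref{EqnEigenBounded},
\begin{align*}
\Rad_\numobs\bigl(\mathcal{F}(\delta, \radius)\bigr)
\;\lesssim\; \sqrt{\tfrac{1}{\numobs}\KerComplex(\delta, \mu)},
\end{align*}
which combined with the $\TruncLevel$-truncation and the $\log$-factors emerging from chaining and a union bound over peeling levels yields a process bound by $\Mfun(\delta)\cdot(\delta + \sqrt{\lambda}\radius)$, thereby explaining the $\sigma^2 \VarBound$ scaling and the $\log^3 \numobs$ factor in definition~\eqref{eq:M-function}.

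Finally I will close the loop via a peeling/localization argument. A priori the regularization term in the basic inequality only gives the crude bound $\hilnorm{\fhatrw} \leq C\radius$ (using $\lambda \radius^2 \geq \rcrit^2/3$ and absorbing the noise term), so it suffices to take $R = O(\radius)$ in the localized set. Assume for contradiction that $\|\Delta\|_\Target^2 > \rcrit^2 + 3\lambda \radius^2$. Plugging the process bound into the basic inequality and using the large-noise assumption $\sigma^2 \geq \kappa^2 \radius^2$ to absorb lower-order terms, I get
\begin{align*}
\|\Delta\|_\Target^2 \;\leq\; 2\,\Mfun(\|\Delta\|_\Target)\,\|\Delta\|_\Target + \lambda\radius^2 + \text{(lower order)},
\end{align*}
and the defining property $\Mfun(\rcrit) \leq \rcrit^2/2$ (together with the fact that $\delta \mapsto \Mfun(\delta)/\delta$ is nonincreasing, which follows from concavity of $\delta \mapsto \KerComplex(\delta,\mu)$ in $\delta^2$) yields a contradiction. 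The main obstacle will be the empirical process step: carefully handling the truncated, unbounded-weight Bernstein bound so that the variance proxy scales as $\TruncLevel \delta^2$ (and not the much worse $\TruncLevel^2 \delta^2$), and then chaining this through a peeling device to control both $Z_1$ and $Z_2$ simultaneously, since a looser bound would inflate the rate by $\sqrt{\numobs \VarBound}$ and destroy the claim.
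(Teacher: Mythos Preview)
Your high-level plan---basic inequality, then uniform control of the weighted noise process $Z_1$ and quadratic process $Z_2$ over a localized class, then a closing localization---matches the paper's. But two steps contain genuine gaps, and they are exactly where the paper does its real work.

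\textbf{The a priori Hilbert bound.} You assert that the basic inequality yields $\hilnorm{\fhatrw}\leq C\radius$ ``absorbing the noise term''. It does not: the noise term $2Z_1(\Delta)$ depends on $\Delta$, and every crude decoupling (AM--GM against the empirical quadratic, or the dual bound $|Z_1(\Delta)|\leq\hilnorm{\Delta}\,\bigl\|\tfrac{1}{\numobs}\sum_i\TruncLike{x_i}\noise_i\KerFun(x_i,\cdot)\bigr\|_\Hilbert$) gives only $\hilnorm{\fhatrw}^2\leq c\,\sigma^2/\lambda+\radius^2$; since $\lambda$ is of order $\rcrit^2/\radius^2\to 0$, this diverges. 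The paper never proves such a bound. Instead it localizes in \emph{both} norms simultaneously, defining $\mathcal{G}(r)=\{g:\|g-\TrueFun\|_\Target\leq r,\ \hilnorm{g-\TrueFun}\leq 3\radius\}$, and uses convexity of the penalized objective to reduce to the boundary: for any $f\notin\mathcal{G}(r)$, the convex combination $\tilde f=\TrueFun+\alpha^{-1}(f-\TrueFun)$ lands on $\partial\mathcal{G}(r)$, and it suffices to show the objective at $\tilde f$ exceeds that at $\TrueFun$. This splits into a $\Target$-boundary case and a Hilbert-boundary case; in the latter, the penalty gap $\lambda(\hilnorm{\tilde f}^2-\radius^2)\geq 3\lambda\radius^2$ is precisely what supplies the contradiction. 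Your one-dimensional peel with a fixed Hilbert radius $R=O(\radius)$ presupposes exactly the bound you cannot get.

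\textbf{The weighted process bound.} Your stated variance $\Exp_\Source[\TruncLike{X}^2 g(X)^2]\leq\TruncLevel\delta^2$ carries a factor $\TruncLevel=\sqrt{\numobs\VarBound}$ where the sharp bound needs only $\VarBound$; pushed through Bernstein-plus-chaining this yields a rate polynomially worse in $\numobs$---the very blow-up you warn against in your last sentence but give no mechanism to avoid. The paper does not chain. It expands $h=\sum_j\theta_j\phi_j$, applies Cauchy--Schwarz against the localized ellipsoid constraint $\sum_j\theta_j^2/\min\{r^2,\mu_j\radius^2\}\leq 10$ to reduce the supremum to a sum over eigen-directions, and then uses $\|\phi_j\|_\infty\leq 1$ to bound $\Exp_\Source\bigl[\TruncLike{X}^2\phi_j(X)^2\bigr]\leq\Exp_\Source[\likeratio(X)^2]\leq\VarBound$ \emph{directly}, with no $\TruncLevel$ appearing. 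For $Z_2$ the Cauchy--Schwarz step is applied twice (once per factor of $h$ in $h^2$), the mean is handled by symmetrization, and the deviation by Talagrand's inequality with range and variance both controlled through the localized sup-norm bound $\|h\|_\infty^2\leq 10\,\KerComplex(r,\mu)$. The eigenfunction hypothesis~\eqref{EqnEigenBounded} is doing its essential work in these \emph{weighted} second-moment bounds, not in the unweighted Rademacher complexity where you invoke it.
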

\noindent  See Section~\ref{SecProofThmReweightedKRR} 
for the proof of this claim.
In Appendix~\ref{sec:cor-KRR-unbounded-upper}, we also present a corollary
which provides a corresponding expectation bound for the reweighted estimator $\fhatrw$
for such $\VarBound$-bounded covariate shifts. 

\medskip
To appreciate the connection to our previous analysis, in the proof of
Corollary~\ref{CorReweightedKRR} below, we show that for any regular
sequence of eigenvalues and $\radius = 1$, we have
\begin{align}
\label{EqnComplexBound}  
  \KerComplex(\delta, \mu) \leq c' \; d(\delta) \delta^2
\end{align}
for some universal constant $c'$.  Moreover, the condition
$\Mfun(\delta) \leq \delta^2 / 2$ can be verified by checking the
inequality
\begin{align}
\label{EqnTwoInequal}  
\sqrt{\frac{\sigma^2 V^2 \log^3(\numobs)}{\numobs} d(\delta)} \leq c_1
\delta.
\end{align}
This further allows us to obtain the rates of estimation over specific 
kernel classes.

\begin{corollary}
  \label{CorReweightedKRR}
  Consider kernels with sup-norm bounded eigenfunctions~\eqref{EqnEigenBounded}.
  \begin{enumerate}
  \item[(a)] For a kernel with rank $\rank$, the truncated-reweighted
    estimator with $\lambda = \unicon \frac{ \rank \VarBound
  \log^3(\numobs) \sigma^2}{\numobs}$ achieves
    \begin{align}
      \label{EqnFiniteReweighted}
      \|\fhatrw - \TrueFun \|_{\Target}^2 \leq \unicon' \frac{ \rank \VarBound
  \log^3(\numobs) \sigma^2}{\numobs}
    \end{align}
    with high probability.
  \item[(b)] For a kernel with $\alpha$-polynomial eigenvalues, we
    have with high probability 
    \begin{align}
      \label{EqnAlphaReweighted}
      \|\fhatrw - \TrueFun \|_{\Target}^2 \leq \unicon' 
      \Big( \frac{ \VarBound \log^{3} (\numobs) }{ \numobs
}\sigma^2 \Big )^{ \frac{2\alpha}{ 2\alpha +1 } },
    \end{align}
    provided that $\lambda = \unicon \big( \frac{ \VarBound \log^{3}( \numobs )}{ \numobs
}\sigma^2 \big )^{ \frac{2\alpha}{ 2\alpha +1 } }$.
  \end{enumerate}
\end{corollary}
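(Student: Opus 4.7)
The plan is to reduce the Corollary to a direct application of Theorem~\ref{ThmReweightedKRR}, after first verifying the complexity estimate~\eqref{EqnComplexBound} and then solving the fixed-point inequality $\Mfun(\delta) \leq \delta^2/2$ in each of the two eigenvalue regimes. Throughout we work with $\radius = 1$, as in the hypothesis.

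First I would verify~\eqref{EqnComplexBound}. Since $\KerComplex(\delta,\mu) = \sum_{j=1}^\infty \min\{\delta^2, \mu_j\}$, I split the sum at the critical index $d(\delta) = \min\{j : \mu_j \leq \delta^2\}$. For $j \leq d(\delta)$ we bound $\min\{\delta^2,\mu_j\} \leq \delta^2$, contributing $d(\delta)\delta^2$; for $j > d(\delta)$ we bound $\min\{\delta^2,\mu_j\} \leq \mu_j$, and then the regularity condition~\eqref{EqnRegular} gives $\sum_{j > d(\delta)} \mu_j \leq c\, d(\delta)\delta^2$. Adding the two pieces yields $\KerComplex(\delta,\mu) \leq c'\, d(\delta)\delta^2$ for some universal $c'$.

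Substituting this into $\Mfun$ reduces the requirement $\Mfun(\delta) \leq \delta^2/2$ to
\begin{align*}
\Ccon\sqrt{\tfrac{\sigma^2 \VarBound \log^3(\numobs)}{\numobs}\, c'\, d(\delta)\delta^2} \leq \delta^2/2,
\end{align*}
which after squaring and cancelling one factor of $\delta^2$ is exactly~\eqref{EqnTwoInequal}. So it suffices to find the smallest $\rcrit$ with $\tfrac{\sigma^2 \VarBound \log^3(\numobs)}{\numobs}\, d(\rcrit) \leq c_1^2\, \rcrit^2$, and to pick $\lambda$ of the same order as $\rcrit^2/3$. Plugging into Theorem~\ref{ThmReweightedKRR} then delivers $\|\fhatrw - \fstar\|_{\Target}^2 \lesssim \rcrit^2$.

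For part (a), a finite-rank-$\rank$ kernel satisfies $d(\delta) \leq \rank$ for every $\delta > 0$, so the fixed-point inequality is solved by $\rcrit^2 \asymp \tfrac{\sigma^2 \VarBound \rank \log^3(\numobs)}{\numobs}$, and the choice $\lambda = \unicon\, \tfrac{\sigma^2 \VarBound \rank \log^3(\numobs)}{\numobs}$ gives~\eqref{EqnFiniteReweighted}. For part (b), $\alpha$-polynomial decay $\mu_j \leq c j^{-2\alpha}$ yields $d(\delta) \leq c'\, \delta^{-1/\alpha}$; plugging this in and balancing the two sides of~\eqref{EqnTwoInequal} by equating $\tfrac{\sigma^2 \VarBound \log^3(\numobs)}{\numobs}\, \delta^{-1/\alpha} \asymp \delta^2$ gives $\rcrit^2 \asymp \bigl(\tfrac{\sigma^2 \VarBound \log^3(\numobs)}{\numobs}\bigr)^{2\alpha/(2\alpha+1)}$, and then the prescribed $\lambda$ matches this order, yielding~\eqref{EqnAlphaReweighted}. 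The main care point (not really an obstacle) is to ensure $\lambda\radius^2 \geq \rcrit^2/3$ holds with the constants chosen, which follows because the proposed $\lambda$ is taken to be a fixed multiple of $\rcrit^2$; the probability statement is inherited verbatim from Theorem~\ref{ThmReweightedKRR}.
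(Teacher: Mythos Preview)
Your proposal is correct and follows essentially the same route as the paper: verify $\KerComplex(\delta,\mu)\leq c'\,d(\delta)\delta^2$ by splitting at $d(\delta)$ and invoking regularity, reduce $\Mfun(\delta)\leq\delta^2/2$ to the inequality~\eqref{EqnTwoInequal}, and then solve the fixed point using $d(\delta)\leq\rank$ in the finite-rank case and $d(\delta)\leq c'\delta^{-1/\alpha}$ in the polynomial-decay case before invoking Theorem~\ref{ThmReweightedKRR}. The only cosmetic difference is that for part~(a) the paper bounds $\KerComplex(\delta,\mu)\leq\rank\,\delta^2$ directly rather than passing through $d(\delta)$, but this is immaterial.
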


\begin{proof}
  We begin by verifying the claim~\eqref{EqnComplexBound}.  Recalling
  the definition of $d(\delta)$ as the smallest integer for which
  $\mu_j \leq \delta^2$, we can write
  \begin{align*}
\KerComplex(\delta, \mu) & = \sum_{j=1}^{d(\delta)} \min \{ \delta^2,
\mu_j  \} + \sum_{j = d(\delta) +1 }^\infty \min \{\delta^2, \mu_j \}
\; \leq \; d(\delta) \delta^2 + c d(\delta) \delta^2
  \end{align*}
where the bound on the second sum follows from the regularity
condition.  This completes the proof of the
bound~\eqref{EqnComplexBound}. 

Given our bound~\eqref{EqnComplexBound}, it is straightforward to 
verify the claim~\eqref{EqnTwoInequal}.

We now prove the bounds~\eqref{EqnFiniteReweighted} and~\eqref{EqnAlphaReweighted}.
For the finite rank case, we have $\KerComplex(\delta, \mu) \leq \rank \delta^2$, which implies $\rcrit^2 \leq c \frac{\rank \VarBound
  \log^3(\numobs) \sigma^2}{\numobs}$ for some universal constant $c$. 
 Apply Theorem~\ref{ThmReweightedKRR} to obtain the desired rate. 
Now we move on to the kernel with $\alpha$-polynomial eigenvalues. 
We know from the proof of Corollary~\ref{CorKRR} that $d(\delta) \leq 
c (1/\delta)^{1/\alpha}$, and hence $\KerComplex(\delta, \mu) 
\leq c' \delta^{2 - 1/\alpha}$. This implies  
$
\rcrit^2 \leq c \Big( \frac{ \VarBound \log^{3} \numobs }{ \numobs
}\sigma^2 \Big )^{ \frac{2\alpha}{ 2\alpha +1 } },
$
which together with Theorem~\ref{ThmReweightedKRR} yields the claim. 
\end{proof}
Corollary~\ref{CorReweightedKRR} showcases that the reweighted KRR 
estimator is minimax optimal (up to log factors) over this more general 
$\chi^2$-bounded family. This can be seen from the lower bound established in 
Theorem~\ref{ThmLowerBound} and the fact that the $\chi^2$-bounded family 
is a larger family compared to the uniformly bounded family.

\section{Proofs}
\label{SecProofs}

In this section, we provide the proofs of our two sets of upper bounds
on different estimators.  Section~\ref{sec:proof-KRR} is devoted to
the proof of Theorem~\ref{ThmKRR} on upper bounds on unweighted KRR
for $\LRBound$-bounded likelihood ratios, whereas
Section~\ref{SecProofThmReweightedKRR} is devoted to the proof of
Theorem~\ref{ThmReweightedKRR} on the performance of
LR-reweighted KRR with truncation.


\subsection{Proof of Theorem~\ref{ThmKRR}}
\label{sec:proof-KRR}

Define the empirical covariance operator\footnote{In this proof, all
the operators are defined with respect to the space
$\ell^2(\mathbb{N})$.}
\begin{align}
\EmpCov_{\Source} & \coloneqq \frac{1}{\numobs} \sum_{i=1}^{\numobs}
\phi(x_i) \phi(x_i)^\top,
\end{align}
the population covariance operator $\PopCov_{\Source} \coloneqq
\Exp_{X \sim \Source} [\phi(X) \phi(X)^\top ]$, and the diagonal
operator $\Mmat \coloneqq \diag( \{\mu_j\}_{j \geq 1} )$.

Before we embark on the proof, we single out two important properties
regarding $\PopCov_{\Source}$ and $\EmpCov_{\Source}$ that will be
useful in later proofs.  For a given $\lambda > 0$, we define the
event
\begin{align}
\label{eq:good-event-bounded}
\Event(\lambda) \coloneqq \Big \{ \Mmat^{1/2} \EmpCov_{\Source}
\Mmat^{1/2} + \lambda \IdMat \succeq \frac{1}{2} \big ( \Mmat^{1/2}
\PopCov_{\Source} \Mmat^{1/2} + \lambda \IdMat \big ) \Big \},
\end{align}
where $\IdMat$ is the identity operator on $\ell^2(\mathbb{N})$.

\begin{lemma}
\label{LemBasicCov}
For any $\LRBound$-bounded source-target pair~\eqref{EqnLRBound}, we
have the deterministic lower bound
\begin{subequations}
  \begin{align}
\label{EqnPopCovLower}    
    \PopCov_{\Source} & \succeq \tfrac{1}{\LRBound} \IdMat.
  \end{align}
If, in addition, the kernel is $\kappa$-uniformly
bounded~\eqref{EqnKerBound}, then whenever $\numobs \lambda \geq 10
\kappa^2$, the event $\Event(\lambda)$ defined in 
equation~\eqref{eq:good-event-bounded} satisfies
\begin{align}
\Prob[\Event(\lambda)] & \geq 1 - 28 \; \tfrac{\kappa^2}{ \lambda } e^{-\frac{
    \numobs \lambda }{16 \kappa^2}}.
\end{align}
\end{subequations}
\end{lemma}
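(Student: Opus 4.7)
}

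For the deterministic lower bound~\eqref{EqnPopCovLower}, the plan is to exploit the fact that the Mercer basis $\{\phi_j\}_{j \geq 1}$ is orthonormal in $L^2(\Target)$, which immediately gives $\Exs_{X \sim \Target}[\phi(X) \phi(X)^\top] = \IdMat$. A change of measure via the likelihood ratio then yields $\IdMat = \Exs_{X \sim \Source}[\likeratio(X) \phi(X) \phi(X)^\top]$, and the pointwise bound $\likeratio(x) \leq \LRBound$ gives $\IdMat \preceq \LRBound \, \PopCov_{\Source}$, i.e., $\PopCov_{\Source} \succeq \LRBound^{-1} \IdMat$. This step is essentially a one-line calculation.

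For the probabilistic bound, I would work with the rescaled feature map $\psi(x) \coloneqq \Mmat^{1/2} \phi(x)$, whose squared norm is $\|\psi(x)\|^2 = \sum_j \mu_j \phi_j(x)^2 = \KerFun(x,x) \leq \kappa^2$ by the boundedness assumption~\eqref{EqnKerBound}. Writing $\Sigma_\psi \coloneqq \Mmat^{1/2} \PopCov_\Source \Mmat^{1/2}$ and $\widehat \Sigma_\psi \coloneqq \Mmat^{1/2} \EmpCov_\Source \Mmat^{1/2} = \tfrac{1}{\numobs} \sum_i \psi(x_i) \psi(x_i)^\top$, the event $\Event(\lambda)$ is equivalent to the one-sided bound
\begin{align*}
\big\|(\Sigma_\psi + \lambda \IdMat)^{-1/2} (\Sigma_\psi - \widehat\Sigma_\psi) (\Sigma_\psi + \lambda \IdMat)^{-1/2}\big\|_{\mathrm{op}} \leq \tfrac{1}{2}.
\end{align*}

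I would then apply a self-adjoint Bernstein-type concentration inequality on separable Hilbert space to the i.i.d.\ centered operators
\begin{align*}
Z_i \coloneqq (\Sigma_\psi + \lambda \IdMat)^{-1/2} \big[\psi(x_i) \psi(x_i)^\top - \Sigma_\psi\big] (\Sigma_\psi + \lambda \IdMat)^{-1/2}.
\end{align*}
Each $Z_i$ is almost-surely bounded by $R \coloneqq \kappa^2 / \lambda$ in operator norm, since $\|\psi(x_i)\|^2 \leq \kappa^2$ and $(\Sigma_\psi + \lambda \IdMat)^{-1} \preceq \lambda^{-1} \IdMat$. A direct computation also gives $\|\Exs[Z_i^2]\|_{\mathrm{op}} \leq R$, and the intrinsic dimension is controlled via $\operatorname{tr}((\Sigma_\psi + \lambda \IdMat)^{-1} \Sigma_\psi) \leq R$. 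Choosing deviation $t = 1/2$ in the Bernstein bound produces a tail of the form $c \cdot \frac{\kappa^2}{\lambda} \exp\!\big(-c' \tfrac{\numobs \lambda}{\kappa^2}\big)$, and the hypothesis $\numobs \lambda \geq 10 \kappa^2$ ensures the regime where the sub-Gaussian part dominates.

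The main obstacle is choosing a version of operator Bernstein whose constants line up with the explicit prefactor $28 \kappa^2/\lambda$ and rate $1/(16 \kappa^2)$ in the statement; this is a matter of carefully tracking intrinsic-dimension constants (rather than invoking the naive $\dim \Hilbert$ bound, which may be infinite) and verifying the Bernstein sub-Gaussian regime under the sample-size condition. Everything else is routine once the reduction to $\psi$-features and the operator-Bernstein reduction are in place.
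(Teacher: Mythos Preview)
Your proposal is correct and follows essentially the same route as the paper: the deterministic part is identical, and for the probabilistic part the paper also reduces to bounding the operator norm of the whitened deviation $\PopCov_\lambda^{-1/2}(\EmpCov_\lambda - \PopCov_\lambda)\PopCov_\lambda^{-1/2}$ by $1/2$, writes it as a sum of the same centered operators $\Zmat_i$, and applies a dimension-free (intrinsic-dimension) matrix Bernstein inequality with the bounds $\opnorm{\Zmat_i} \leq 2\kappa^2/\lambda$, $\opnorm{\Vmat} \leq \kappa^2/\lambda$, and $\trace(\Vmat) \leq (\kappa^2/\lambda)^2$. The only discrepancy is a factor of $2$ in the almost-sure bound (your $R$ versus the paper's $L = 2\kappa^2/\lambda$, coming from the centering), which is immaterial to the argument.
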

\noindent See Section~\ref{SecProofLemBasicCov} for the proof
of this claim. \\

Equipped with Lemma~\ref{LemBasicCov}, we now proceed to the proof of
the theorem.  In terms of the basis $\{\phi_j\}_{j \geq 1}$, the KRR
estimate has the expansion $\fhatkrr = \SUMJ \thetakrr_j \phi_j$,
where $\thetakrr = \{\thetakrr_j\}_{j \geq 1}$ is a sequence of
coefficients in $\ell^2(\Natural)$.  By the optimality conditions
for the KRR problem, we have
\begin{align}\label{eqn:opt-conditions-KRR}
\thetahat - \thetastar & = - \lambda ( \EmpCov_{\Source} + \lambda
\Mmat^{-1} )^{-1} \Mmat^{-1} \thetastar + ( \EmpCov_{\Source} +
\lambda \Mmat^{-1} )^{-1} \big( \frac{1}{\numobs}
\sum_{i=1}^{\numobs} \noise_i \phi (x_i) \big).
\end{align}
By the triangle inequality, we have the upper bound $\| \thetahat -
\thetastar \|_2^2 \leq 2 \big( \Term_1 + \Term_2 \big)$, where
\begin{align*}
\Term_1 \coloneqq \| \lambda ( \EmpCov_{\Source} + \lambda \Mmat^{-1}
)^{-1} \Mmat^{-1} \thetastar \|_2^2,  \quad  \mbox{and}  \quad 
\Term_2 \coloneqq \| ( \EmpCov_{\Source} + \lambda \Mmat^{-1} )^{-1}
\big( \frac{1}{\numobs} \sum_{i=1}^{\numobs} \noise_i \phi (x_i)
\big) \|_2^2.
\end{align*}
In terms of this decomposition, it suffices to establish that the
following bounds
\begin{align}
\label{EqnKRRTermBounds}
\Term_1 \stackrel{(a)}{\leq} 2  \lambda \LRBound \radius^2, \quad \mbox{and}
\quad
\Term_2 \stackrel{(b)}{\leq} \frac{40 ( \log \numobs ) \sigma^2 }{
  \numobs } \sum_{j=1}^{\infty} \frac{\mu_j }{ \mu_j / \LRBound +
  \lambda},
\end{align}
hold with probability at least $1 - 28 \; \tfrac{\kappa^2}{ \lambda } e^{-\frac{
    \numobs \lambda }{16 \kappa^2}} - n^{-10}$.


\subsubsection{Proof of the bound~\eqref{EqnKRRTermBounds}(a)}

We establish that this bound holds conditionally on the event
$\Event(\lambda)$.  Following some algebraic manipulations, we have
\begin{align*}
\Term_1 & = \lambda^2 \| \Mmat^{1/2} ( \Mmat^{1/2} \EmpCov_{\Source}
\Mmat^{1/2} + \lambda \IdMat )^{-1} \Mmat^{-1/2} \thetastar \|_2^2 \\
& \stackrel{(i)}{\leq} \lambda^2 \radius^2 \opnorm{\Mmat^{1/2} ( \Mmat^{1/2}
  \EmpCov_{\Source} \Mmat^{1/2} + \lambda \IdMat )^{-1}}^2 \\
& \stackrel{(ii)}{\leq} \lambda \radius^2 \opnorm{\Mmat^{1/2} ( \Mmat^{1/2}
  \EmpCov_{\Source} \Mmat^{1/2} + \lambda \IdMat )^{-1/2} }^2 \\
& \stackrel{(iii)}{\leq} 2 \lambda \radius^2 \opnorm{ \Mmat^{1/2} ( \Mmat^{1/2}
  \PopCov_{\Source} \Mmat^{1/2} + \lambda \IdMat )^{-1} \Mmat^{1/2} }.
\end{align*}
Here inequality (i) follows from the fact that
 $\| \Mmat^{-1/2}
\thetastar \|_2 = \radius$; the second step (ii) uses the fact that
$\Mmat^{1/2} \EmpCov_{\Source} \Mmat^{1/2} + \lambda \IdMat \succeq
\lambda \IdMat$, and step (iii) follows from the fact that we are
conditioning on the event $\Event(\lambda)$.

Lemma~\ref{LemBasicCov} also guarantees that $\PopCov_{\Source}
\succeq \frac{1}{\LRBound} \IdMat$, whence
\begin{align*}
\Term_1 \leq 2 \lambda \radius^2 \opnorm{ \Mmat^{1/2} ( \frac{1}{\LRBound} \Mmat
  + \lambda \IdMat )^{-1} \Mmat^{1/2} } = 2 \lambda \cdot \max_{j \geq
  1} \left \{ \frac{\mu_j}{ \tfrac{\mu_j}{\LRBound} + \lambda} \right
\} \leq 2  \lambda \LRBound \radius^2.
\end{align*}
This establishes the claim~\eqref{EqnKRRTermBounds}(a).


\subsubsection{Proof of the bound~\eqref{EqnKRRTermBounds}(b)}

Define the random vector $W \coloneqq (\EmpCov_{\Source} + \lambda
\Mmat^{-1} )^{-1} \big( \frac{1}{\numobs} \sum_{i=1}^{\numobs}
\noise_i \phi (x_i) \big)$.  Conditioned on the covariates $\{ x_i
\}_{i=1}^\numobs$, $W$ is a zero-mean sub-Gaussian random variable with covariance 
operator
\begin{align*}
\LamMat \coloneqq \frac{\sigma^2}{ \numobs } ( \EmpCov_{\Source} +
\lambda \Mmat^{-1} )^{-1} \EmpCov_{\Source} ( \EmpCov_{\Source} +
\lambda \Mmat^{-1} )^{-1}.
\end{align*}
Consequently, by the Hanson-Wright inequality in the RKHS (cf.~Theorem 2.6 in 
the paper~\cite{chen2021hanson}), we
have
\begin{align}
  \label{EqnTermTwoConcentrate}
\Prob \big[ \Term_2 \geq 20 (\log \numobs) \,\trace(\LamMat) \mid \{x_i\}_{i=1}^\numobs
  \big] & \leq \frac{1}{\numobs^{10}},
\end{align}
where the probability is taken over the noise variables.

It remains to upper bound the trace.  We have $\trace(\LamMat) =
\trace \big ( \frac{\sigma^2}{ \numobs } ( \EmpCov_{\Source} + \lambda
\Mmat^{-1} )^{-1} \EmpCov_{\Source} ( \EmpCov_{\Source} + \lambda
\Mmat^{-1} )^{-1} \big )$, so that
\begin{align*}
\trace ( \LamMat ) & \leq \trace \Big ( \frac{\sigma^2}{ \numobs } (
\EmpCov_{\Source} + \lambda \Mmat^{-1} )^{-1} ( \EmpCov_{\Source} +
\lambda \Mmat^{-1} ) ( \EmpCov_{\Source} + \lambda \Mmat^{-1} )^{-1}
\Big ) \\
& = \trace \Big ( \frac{\sigma^2}{ \numobs } ( \EmpCov_{\Source} +
\lambda \Mmat^{-1} )^{-1} \Big ) = \trace \Big ( \frac{\sigma^2}{
  \numobs } ( \Mmat^{1/2} ( \Mmat^{1/2} \EmpCov_{\Source} \Mmat^{1/2}
+ \lambda \IdMat )^{-1} \Mmat^{1/2} \Big ).
\end{align*}
Under the event $\Event(\lambda)$, we have $\Mmat^{1/2}
\EmpCov_{\Source} \Mmat^{1/2} + \lambda \IdMat \succeq \frac{1}{2}
\left ( \Mmat^{1/2} \PopCov_{\Source} \Mmat^{1/2} + \lambda \IdMat
\right )$, which implies
\begin{align*}
\trace (\LamMat) & \leq 2 \frac{\sigma^2}{ \numobs } \trace \Big(
\Mmat^{1/2} (\Mmat^{1/2} \PopCov_{\Source} \Mmat^{1/2} + \lambda
\IdMat )^{-1} \Mmat^{1/2} \Big) \\
& \stackrel{(i)}{\leq} 2 \frac{\sigma^2}{ \numobs } \trace \Big(
\Mmat^{1/2} ( \frac{1}{\LRBound} \Mmat + \lambda \IdMat )^{-1}
\Mmat^{1/2} \Big) \\
& \stackrel{(ii)}{=} 2 \frac{\sigma^2}{ \numobs } \sum_{j=1}^{\infty}
\frac{\mu_j }{ \tfrac{\mu_j}{\LRBound} + \lambda }.
\end{align*}
Here step (i) follows since $\PopCov_{\Source} \succeq
\frac{1}{\LRBound} \IdMat$, and step (ii) follows from a direct
calculation.  Substituting this upper bound on the trace into the tail
bound~\eqref{EqnTermTwoConcentrate} yields the claimed
bound~\eqref{EqnKRRTermBounds}(b).


\subsubsection{Proof of Lemma~\ref{LemBasicCov}}
\label{SecProofLemBasicCov}
We begin with the proof of the lower bound~\eqref{EqnPopCovLower}.
Since $\{ \phi_j \}_{j \geq 1}$ is an orthonormal basis of
$L^2(\Target)$, we have
\begin{align*}
\Exp_{X \sim \Target}\big[ \phi(X) \phi(X)^\top \big] = \Exp_{X \sim
  \Source} \big[ \likeratio(X) \phi(X) \phi(X)^\top \big] = \IdMat.
\end{align*}
Thus, the $\LRBound$-boundedness of the likelihood
ratio~\eqref{EqnLRBound} implies that
\begin{align*}
\IdMat \preceq \Exp_{X \sim \Source} [ \LRBound \phi(X) \phi(X)^\top ]
= \LRBound \PopCov_{\Source},
\end{align*}
which is equivalent to the claim~\eqref{EqnPopCovLower}.

Next we prove the lower bound~\eqref{eq:good-event-bounded}.
We introduce the shorthand notation
\begin{align*}
\EmpCov_{\lambda} \coloneqq \Mmat^{1/2} \EmpCov_{\Source} \Mmat^{1/2}
+ \lambda \IdMat,  \quad  \mbox{and}  \quad  \PopCov_{\lambda} \coloneqq
\Mmat^{1/2} \PopCov_{\Source} \Mmat^{1/2} + \lambda \IdMat
\end{align*}
along with the matrix $\DelMat \coloneqq \PopCov_{\lambda}^{-1/2} (
\EmpCov_{\lambda} - \PopCov_{\lambda} )
\PopCov_{\lambda}^{-1/2}$. Recalling that $\opnorm{\cdot}$ denotes the
$\ell_2$-operator norm of a matrix, we first observe that $\{
\opnorm{\DelMat} \leq \frac{1}{2} \} \subseteq \Event$.  Consequently,
it suffices to show that $\opnorm{\DelMat} \leq \tfrac{1}{2}$ with
high probability.

The matrix $\DelMat$ can be written as the normalized sum $\DelMat =
\frac{1}{\numobs} \sum_{i=1}^\numobs \Zmat_i$, where the random
operators
\begin{align*}
\Zmat_i & \coloneqq \PopCov_{\lambda}^{-1/2} \Mmat^{1/2} \left (
\phi(x_i) \phi(x_i)^\top - \PopCov_{\Source} \right ) \Mmat^{1/2}
\PopCov_{\lambda}^{-1/2}
\end{align*}
are i.i.d. The operator norm of each term can be bounded as
\begin{align}
\opnorm{\Zmat_i} \leq 2 \sup_{x \in \Xspace} \opnorm{
  \PopCov_{\lambda}^{-1/2} \Mmat^{1/2} \phi(x) \phi(x)^\top
  \Mmat^{1/2} \PopCov_{\lambda}^{-1/2} } & = 2 \sup_{x \in \Xspace} \|
\PopCov_{\lambda}^{-1/2} \Mmat^{1/2} \phi(x) \|_2^2 \nonumber \\
& \leq 2 \kappa^2 \opnorm{ \PopCov_{\lambda}^{-1/2} }^2 \leq \frac{2
  \kappa^2 }{\lambda} \eqqcolon L, \label{eq:L-bound}
\end{align}
where the final inequality follows from the assumption that $\sup_{x
  \in \Xspace} \| \Mmat^{1/2} \phi(x) \|_2^2 \leq \kappa^2$, and the
fact that $\PopCov_{\lambda} \succeq \lambda \IdMat$.

On the other hand, the variance of $\Zmat_i$ can be bounded as
\begin{align*}
\Exp [\Zmat_i^2] & \preceq \Exp [ ( \PopCov_{\lambda}^{-1/2}
  \Mmat^{1/2} \phi(X) \phi(X)^\top \Mmat^{1/2}
  \PopCov_{\lambda}^{-1/2} )^2 ] \\
& = \Exp [ \PopCov_{\lambda}^{-1/2} \Mmat^{1/2} \phi(X) \phi(X)^\top
  \Mmat^{1/2} \PopCov_{\lambda}^{-1} \Mmat^{1/2} \phi(X) \phi(X)^\top
  \Mmat^{1/2} \PopCov_{\lambda}^{-1/2} ] \\
& \preceq \Exp \Big[ \PopCov_{\lambda}^{-1/2} \Mmat^{1/2} \phi(X)
  \phi(X)^\top \Mmat^{1/2} \PopCov_{\lambda}^{-1/2} \Big] \cdot
\sup_{x \in \Xspace} \Big \{ \phi(x)^\top \Mmat^{1/2}
\PopCov_{\lambda}^{-1} \Mmat^{1/2} \phi(x) \Big \} \\
& \preceq \frac{  \kappa^2 }{\lambda} \PopCov_{\lambda}^{-1/2}
\Mmat^{1/2} \PopCov_{\Source} \Mmat^{1/2} \PopCov_{\lambda}^{-1/2}
\eqqcolon \Vmat,
\end{align*}
where the last inequality follows by applying the bound~\eqref{eq:L-bound}
 on $\sup_{x
  \in \Xspace} \| \PopCov_{\lambda}^{-1/2} \Mmat^{1/2} \phi(x)
\|_2^2$.  

Suppose that we can show that
\begin{subequations}
\label{subeq:bernstein}
\begin{align}
\trace(\Vmat) &\leq \frac{ \kappa^2}{\lambda} \cdot
\frac{\kappa^2}{\lambda}; \label{eq:trace-bound}\\ 
\opnorm{\Vmat} &\leq \frac{
  \kappa^2}{\lambda}. \label{eq:op-norm-bound}
\end{align}
\end{subequations} 
We can then apply a dimension-free matrix Bernstein inequality (see 
Lemma~\ref{lemma:bernstein-operator}) with $t=1/2$ to obtain the tail bound
\begin{align*}
\Prob \big[ \opnorm{\DelMat} \geq \tfrac{1}{2} \big] & \leq 28 
\frac{\kappa^2}{ \lambda } \exp \Big( -\frac{ \numobs \lambda }{
  16 \kappa^2 } \Big),
\end{align*}
as long as $\numobs \lambda \geq 10 \kappa^2$.  Thus, the only
remaining detail is to prove the bounds~\eqref{eq:trace-bound}
and~\eqref{eq:op-norm-bound}.

\paragraph{Proof of the bound~\eqref{eq:trace-bound}:}

Using the definition of $\Vmat$, we have
\begin{align*}
\trace(\Vmat) = \frac{ \kappa^2}{\lambda}
\trace\Big(\PopCov_{\lambda}^{-1/2} \Mmat^{1/2} \PopCov_{\Source}
\Mmat^{1/2} \PopCov_{\lambda}^{-1/2} \Big) & = \frac{
  \kappa^2}{\lambda} \Exp_{\Source} [ \trace \big(
  \PopCov_{\lambda}^{-1/2} \Mmat^{1/2} \phi(x) \phi(x)^\top
  \Mmat^{1/2} \PopCov_{\lambda}^{-1/2} \big ) ] \\
& \leq \frac{ \kappa^2}{\lambda} \cdot
\frac{\kappa^2}{\lambda}.
\end{align*}
Here we have again applied the bound $\sup_{x \in \Xspace} \|
\PopCov_{\lambda}^{-1/2} \Mmat^{1/2} \phi(x) \|_2^2 \leq \kappa^2 /
\lambda$.

\paragraph{Proof of the bound~\eqref{eq:op-norm-bound}:}

Recalling the definition of $\PopCov_{\lambda}$, we see that $\opnorm{
  \PopCov_{\lambda}^{-1/2} \Mmat^{1/2} \PopCov_{\Source} \Mmat^{1/2}
  \PopCov_{\lambda}^{-1/2} } \leq 1$, and hence
\begin{align*}
\opnorm{\Vmat} = \frac{ \kappa^2 }{\lambda} \opnorm{
  \PopCov_{\lambda}^{-1/2} \Mmat^{1/2} \PopCov_{\Source} \Mmat^{1/2}
  \PopCov_{\lambda}^{-1/2} } \leq \frac{ \kappa^2 }{\lambda},
\end{align*}
which is the claimed upper bound on $\opnorm{\Vmat}$.



\subsection{Proof of Theorem~\ref{ThmReweightedKRR}}
\label{SecProofThmReweightedKRR}

We now turn to the proof of our guarantee on the truncated
LR-reweighted estimator.  At the core of the proof is a uniform
concentration result, one that holds within a local ball
\begin{align*}
        \GoodFunc(r) \coloneqq \{ f \in \Hilbert \mid \| f - \TrueFun\|_{\Target} 
        \leq r, \text{ and }  \|f - \TrueFun\|_{\Hilbert} \leq 3 \radius \}
\end{align*}
around the
true regression function~$\TrueFun$.
\begin{lemma}
\label{lemma:key}
Fixing any $r > 0$, we have
\begin{align}
\label{eq:key-event}
\sup_{ \substack{g \in \GoodFunc(r)}} \Big \{ \| g - \TrueFun\|_{\Target}^2 + \frac{1}{\numobs}
\sum_{i=1}^{\numobs} \TruncLike{x_i} \big[\big( \TrueFun(x_i) - y_{i}
  \big)^2 - \big( g(x_i) - y_{i} \big)^2 \big] \Big \} \leq \Mfun(r)
\end{align}
with probability at least $1 - c \numobs^{-10}$.
\end{lemma}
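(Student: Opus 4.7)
Set $h \coloneqq g - \fstar$, so that $\|h\|_\Target \leq r$ and $\hilnorm{h} \leq 3\radius$. Using $y_i = \fstar(x_i) + w_i$ and expanding the squared losses, the functional inside the supremum equals
\begin{align*}
\Phi_\numobs(h) \;\coloneqq\; \|h\|_\Target^2 \;-\; \frac{1}{\numobs}\sum_{i=1}^\numobs \TruncLike{x_i}\, h(x_i)^2 \;+\; \frac{2}{\numobs}\sum_{i=1}^\numobs \TruncLike{x_i}\, w_i\, h(x_i).
\end{align*}
Using $\|h\|_\Target^2 = \Exs_\Source[\likeratio(X) h(X)^2]$, I would split $\Phi_\numobs$ into three pieces: a deterministic truncation bias $A(h) \coloneqq \Exs_\Source[(\likeratio(X) - \TruncLike{X})\, h(X)^2]$; a centered empirical process $B(h) \coloneqq \Exs_\Source[\TruncLike{X} h(X)^2] - \tfrac{1}{\numobs}\sum_i \TruncLike{x_i}\, h(x_i)^2$; and a noise term $C(h) \coloneqq \tfrac{2}{\numobs}\sum_i \TruncLike{x_i} w_i\, h(x_i)$.

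The bias $A(h)$ is handled deterministically. The bounded-eigenfunction assumption and the reproducing property give $\|h\|_\infty \leq \kappa\, \hilnorm{h} \leq 3\kappa\radius$; combining Cauchy--Schwarz with Markov's inequality and $\Exs_\Source[\likeratio^2] \leq \VarBound$ yields
\begin{align*}
A(h) \;\leq\; \|h\|_\infty^2\, \Exs_\Source[\likeratio(X)\mathbf{1}\{\likeratio(X) > \TruncLevel\}] \;\leq\; 9\kappa^2\radius^2 \cdot \tfrac{\VarBound}{\TruncLevel} \;=\; 9\kappa^2\radius^2 \sqrt{\VarBound/\numobs},
\end{align*}
which in the large-noise regime $\sigma^2 \geq \kappa^2\radius^2$ is dominated by $\Mfun(r)$. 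For the two stochastic pieces I would invoke a functional Bernstein/Talagrand inequality. The key inputs are: (i) each summand of $B(h)$ is uniformly bounded by $9\kappa^2\radius^2 \TruncLevel$ with variance at most $\TruncLevel\, \Exs_\Source[\TruncLike{X} h(X)^2]\, \|h\|_\infty^2 \lesssim \TruncLevel r^2 \kappa^2\radius^2$; (ii) conditional on $\{x_i\}$, $C(h)$ is sub-Gaussian in $h$ with variance factor bounded in expectation by $4\sigma^2\TruncLevel r^2/\numobs$; and (iii) symmetrization followed by a direct Mercer-basis calculation---using only that $\|\phi_j\|_\infty \leq 1$ together with the definitions $\|h\|_\Target^2 = \sum \theta_j^2$ and $\hilnorm{h}^2 = \sum \theta_j^2/\mu_j$---bounds the local Rademacher complexity of $\GoodFunc(r) - \fstar$ by $c\sqrt{\KerComplex(r,\mu)/\numobs}$. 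Plugging these ingredients into Talagrand's inequality and collecting terms would give $\sup_h (B(h) + C(h)) \lesssim \Mfun(r)$ with probability at least $1 - c\numobs^{-10}$, and adding the bound on $A(h)$ then completes the proof.

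The main obstacle is avoiding a spurious factor of $\TruncLevel = \sqrt{\numobs\VarBound}$ in the \emph{leading} term of each concentration bound. The trick is that it is the variance, rather than the sup-norm, of each summand that enters the dominant term, and the bootstrapped inequality $\Exs_\Source[\TruncLike{X}^2 h(X)^2] \leq \TruncLevel\, \Exs_\Source[\TruncLike{X} h(X)^2] \leq \TruncLevel\, \|h\|_\Target^2$ absorbs one factor of $\TruncLevel$ against the local radius $r^2$. This produces the correct $\sqrt{\VarBound/\numobs}$ scaling in front of $\sqrt{\KerComplex(r,\mu)}$, while the remaining factor of $\TruncLevel$ appears only inside the lower-order Bernstein deviation term and is responsible for the $\log^3\numobs$ factor in $\Mfun$.
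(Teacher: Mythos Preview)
Your three–term decomposition $\Phi_\numobs(h)=A(h)+B(h)+C(h)$ is exactly what the paper does (there written as $T_{2a},T_{2b},T_1$). The gap is in the sup-norm bound you rely on. You use $\|h\|_\infty\le\kappa\hilnorm{h}\le 3\kappa\radius$, which discards the localization $\|h\|_\Target\le r$. The paper instead combines \emph{both} constraints $\sum_j\theta_j^2\le r^2$ and $\sum_j\theta_j^2/\mu_j\le 9\radius^2$ with $\|\phi_j\|_\infty\le 1$ to obtain the sharper pair of bounds (equations~\eqref{EqnElementary})
\[
\sum_{j\ge 1}\frac{\theta_j^2}{\min\{r^2,\mu_j\radius^2\}}\le 10,
\qquad
|h(x)|\le\sqrt{10\,\KerComplex(r,\mu)}.
\]
This localized control is what makes every term scale with $\KerComplex(r,\mu)$ and hence with $\Mfun(r)$.

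With your crude bound, the bias estimate $A(h)\le 9\kappa^2\radius^2\sqrt{\VarBound/\numobs}$ is independent of $r$, so it cannot be dominated by $\Mfun(r)$ for small $r$ (note $\Mfun(r)\to 0$ as $r\to 0$), whereas the lemma is stated for all $r>0$. More seriously, your ``bootstrapped'' variance inequality $\Exs_\Source[\TruncLike{X}^2 h(X)^2]\le\TruncLevel\,r^2=\sqrt{\numobs\VarBound}\,r^2$ still leaves a factor of $\TruncLevel$ in the \emph{leading} Talagrand deviation term, not only in the lower-order one: plugging it in gives fluctuations of order $(\VarBound/\numobs)^{1/4}$ rather than the required $(\VarBound/\numobs)^{1/2}$, so the resulting bound is not $\le\Mfun(r)$. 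The paper circumvents this by bounding variances through the localized sup-norm, e.g.\ $\Exs_\Source[\TruncLike{X}^2 h(X)^4]\le\|h\|_\infty^4\,\VarBound\le 100\,\KerComplex(r,\mu)^2\VarBound$ for $B(h)$, and, for $C(h)$, by first applying Cauchy--Schwarz in the Mercer basis (using the coefficient inequality above) and then bounding $\sum_i[\TruncLike{x_i}\phi_j(x_i)]^2$ via $\Exs_\Source[\TruncLike{X}^2\phi_j(X)^2]\le\VarBound$ together with Bernstein. Once you replace $3\kappa\radius$ by $\sqrt{10\,\KerComplex(r,\mu)}$ throughout and use the coefficient inequality in place of a generic Rademacher-complexity bound, your outline matches the paper's argument and goes through.
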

\noindent See Section~\ref{SecProofLemmaKey} for the proof of this
lemma.

\medskip

\newcommand{\regCrit}{\delta_\lambda}
Taking this lemma as given, we now complete the proof of the theorem. 
Define the regularized radius $\regCrit \coloneqq \sqrt{\rcrit^2 + 3 \lambda \radius^2}$, 
and denote by $\Event(\regCrit)$ the ``good'' event that the
relation~\eqref{eq:key-event} holds at radius $\regCrit$. 
We immediately point out an important property of the regularized radius 
$\regCrit$, namely $\Mfun(\regCrit) \leq (1/2) \cdot \regCrit^2$. To see this, 
note that $r \mapsto \Mfun(r) / r$ is non-increasing in $r$, and hence 
\begin{align*}
\frac{\Mfun(\regCrit)}{\regCrit} \leq \frac{\Mfun(\rcrit)}{\rcrit} \leq \frac{1}{2} \rcrit 
\leq \frac{1}{2} \regCrit.
\end{align*}

Suppose that
conditioned on $\Event(\regCrit)$, the following inequality holds
\begin{align}
\label{eq:sufficient-condition}  
\inf_{f \in \Hilbert, f \notin \GoodFunc(\regCrit) }
\frac{1}{\numobs} \sum_{i=1}^{\numobs} \TruncLike{x_i} \Big \{ \big(
f(x_i) - y_{i} \big)^2 - \big( \TrueFun(x_i) - y_{i} \big)^2 \Big \}
+ \lambda  \|f\|_{\Hilbert}^2 - \lambda  \|\TrueFun\|_{\Hilbert}^2> 0.
\end{align}
It then follows that that $\| \fhat -\TrueFun \|_{\Target} \leq
\regCrit$, as desired.  Consequently, the remainder of our proof is
devoted to establishing that
inequality~\eqref{eq:sufficient-condition} holds conditioned
on $\Event(\regCrit)$.

Given any function $f \in \Hilbert$ and $f \notin \GoodFunc(\regCrit)$, 
there exists an $\alpha \geq 1$ such that $\ftil \coloneqq
\TrueFun + \tfrac{1}{\alpha} (f -
\TrueFun)$ lies in the set $\Hilbert$, and more importantly $\ftil$ lies 
on the boundary of $\GoodFunc(\regCrit)$. 
This follows from the convexity of the two sets $\Hilbert$ and 
$\GoodFunc(\regCrit)$.  Since $\ftil$ is a
convex combination of $f$ and $\TrueFun$, Jensen's inequality implies
that
\begin{align*}
&\TruncLike{x_i} \Big \{ \big( \ftil( x_i ) - y_{i} \big)^2 - \big(
\TrueFun(x_i) - y_{i} \big)^2 \Big \} + \lambda  \|\ftil\|_{\Hilbert}^2 - \lambda  \|\TrueFun\|_{\Hilbert}^2 \\
&\quad  \leq \tfrac{1}{\alpha} \left \{ \TruncLike{x_i}  \Big \{ \big( f(x_i) -
y_{i}\big)^2 - \big( \TrueFun(x_i) - y_{i} \big)^2 \Big \} 
+ \lambda  \|f\|_{\Hilbert}^2 - \lambda  \|\TrueFun\|_{\Hilbert}^2 \right \}.
\end{align*}
Consequently, in order to establish the
claim~\eqref{eq:sufficient-condition}, it suffices to prove that the
quantity
\begin{align*}
\Term \coloneqq \frac{1}{\numobs} \; \sum_{i=1}^{\numobs}
\TruncLike{x_i} \big \{ \big( \TrueFun(x_i) - y_{i} \big)^2 - \big(
\ftil(x_i) - y_{i} \big)^2 \big \} +  \lambda  \|\TrueFun\|_{\Hilbert}^2 -\lambda  \|\ftil\|_{\Hilbert}^2
\end{align*}
is negative. Since $\ftil$ lies on the boundary of $\GoodFunc(\regCrit)$, 
we can split the proof into two cases: (1) $\|\ftil - \TrueFun\|_\Target = \regCrit$, 
while $\|\ftil - \TrueFun\|_\Hilbert \leq 3 \radius$, and (2) $\|\ftil - \TrueFun\|_\Target 
\leq \regCrit$, while $\|\ftil - \TrueFun\|_\Hilbert = 3 \radius$.

\paragraph{Case 1: }$\|\ftil - \TrueFun\|_\Target = \regCrit$, 
while $\|\ftil - \TrueFun\|_\Hilbert \leq 3 \radius$. 
By adding and subtracting terms, we have
\begin{align*}
\Term & = \Biggr[ \frac{1}{\numobs} \sum_{i=1}^{\numobs}
  \TruncLike{x_i} \Big \{ \big( \TrueFun(x_i) - y_{i} \big)^2 -\big(
  \ftil( x_i ) - y_{i} \big)^2 \Big \} + \| \ftil - \TrueFun
  \|_{\Target}^2 \Biggr] -\| \ftil -\TrueFun \|_{\Target}^2 +  \lambda  \|\TrueFun\|_{\Hilbert}^2 -\lambda  \|\ftil\|_{\Hilbert}^2 \\
 & \stackrel{(i)}{\leq} \Mfun(\regCrit) - \regCrit^2 + \lambda \radius^2 
 \stackrel{(ii)}{<} - \frac{1}{2} \regCrit^2 + \lambda \radius^2 \stackrel{(iii)}{<} 0,
\end{align*}
where step (i) follows from conditioning on the event $\Event(\regCrit)$, 
the equality $\|\ftil - \TrueFun \|_{\Target}^2 = \regCrit^2$, and 
non-positivity of $\lambda \|\ftil\|_\Hilbert^2$;
 step (ii) follows from the property $\Mfun(\regCrit) \leq (1/2) \cdot \regCrit^2 $ and 
 step (iii) uses the definitions of $\regCrit$ and~$\lambda$.  
 
\paragraph{Case 2: }$\|\ftil - \TrueFun\|_\Target \leq \regCrit$, 
while $\|\ftil - \TrueFun\|_\Hilbert = 3 \radius$. 
By the same addition and subtraction as above, we have 
\begin{align*}
\Term & = \Biggr[ \frac{1}{\numobs} \sum_{i=1}^{\numobs}
  \TruncLike{x_i} \Big \{ \big( \TrueFun(x_i) - y_{i} \big)^2 -\big(
  \ftil( x_i ) - y_{i} \big)^2 \Big \} + \| \ftil - \TrueFun
  \|_{\Target}^2 \Biggr] -\| \ftil -\TrueFun \|_{\Target}^2 +  \lambda  \|\TrueFun\|_{\Hilbert}^2 -\lambda  \|\ftil\|_{\Hilbert}^2 \\
 & \stackrel{(i)}{\leq} \Mfun(\regCrit) + \lambda  \|\TrueFun\|_{\Hilbert}^2 -\lambda  \|\ftil\|_{\Hilbert}^2 \\
 & \stackrel{(ii)}{<} \frac{1}{ 2} \regCrit^2 - 3 \lambda \radius^2,
\end{align*}
Here, step (i) again follows from the conditioning on the event 
$\Event(\regCrit)$ and the assumption that $\|\ftil - \TrueFun\|_\Target \leq \regCrit$. 
Step (ii) relies on the facts that $\Mfun(\regCrit) \leq (1/2) \cdot \regCrit^2 $, $\|\TrueFun\|_\Hilbert = \radius$, and that $\|\ftil\|_\Hilbert \geq 2 \radius$. The latter is a simple consequence of $\|\ftil - \TrueFun\|_\Hilbert = 3 \radius$ and the triangle inequality. Substitute in the definitions of $\regCrit$ and $\lambda$ to see the negativity of~$\Term$.  

Combine the two cases to 
finish the proof of the claim~\eqref{eq:sufficient-condition}.


\subsubsection{Proof of Lemma~\ref{lemma:key}}
\label{SecProofLemmaKey}

Define the shifted function class $\FuncClass^{\star} \coloneqq
\Hilbert - \TrueFun$, along with its $r$-localized version
\begin{align*}
  \FclassStar(r) \coloneqq \big \{ h \in \FclassStar \mid
  \|h\|_\Target \leq r ,\quad \text{and}\quad \|h\|_\Hilbert \leq 3 \radius \big \}.
\end{align*}
We begin by observing that
\begin{align*}
\big( \TrueFun(x_i) - y_{i} \big)^2 - \big( g(x_i) - y_{i} \big)^2 & =
2 \noise_{i} [ g(x_i) - \TrueFun(x_i) ] - \big( g(x_i) - \TrueFun(x_i)
\big)^2,
\end{align*}
which yields the following equivalent formulation of the claim in Lemma~\ref{lemma:key}:
\begin{align}
\label{EqnEquivalent}
\sup_{h \in \FuncClassStar(r)} \Biggr \{ \frac{1}{\numobs}
\sum_{i=1}^{\numobs} \Big[ 2 \noise_{i} \TruncLike{x_i} h(x_i) +
  \|h\|_{\Target}^2 - \TruncLike{x_i} h^2(x_i) \Big] \Biggr \} & \leq
\Mfun(r).
\end{align}
By the triangle inequality, it suffices to show that $\Term_1 +
\Term_2 \leq \Mfun(r)$, where
\begin{align*}
\Term_1 \coloneqq \sup_{h \in \FuncClassStar(r)} \Big|
\frac{2}{\numobs} \sum_{i=1}^{\numobs} \noise_{i}\TruncLike{x_i}
h(x_i) \Big|,  \quad  \mbox{and}  \quad  \Term_2 \coloneqq \sup_{h \in
  \FuncClassStar(r)} \Big| \frac{1}{\numobs} \sum_{i=1}^{\numobs} \big
\{ \|h\|_{\Target}^2 - \TruncLike{x_i} h^2(x_i) \big \} \Big|.
\end{align*}
More precisely, the core of our proof involves establishing the
following two bounds: 
\begin{subequations}
  \begin{align}
\label{EqnMina}    
\Term_1 & \leq \unicon \sigma \sqrt{\frac{\VarBound \log^3(\numobs)}{\numobs}}
\Bigsqrt{ \sum_{j=1}^{\infty} \min\{ r^2, \mu_{j} \radius^2 \}} \qquad \mbox{with
  probability at least }1 - \numobs^{-10}, \text{ and } \\
\label{EqnKento}
\Term_2 & \leq \unicon \sqrt{ \frac{\VarBound
    \log^3(\numobs)}{\numobs}} \cdot \SUMJ \min\{r^2, \mu_{j} \radius^2 \} \qquad
\mbox{with probability at least }1 - \numobs^{-10}.
 \end{align}
\end{subequations}
In conjunction, these two bounds ensure that
\begin{align}\label{eq:M-original-bound}
\Term_1 + \Term_2 & \leq \unicon \sqrt{ \frac{\VarBound
    \log^3(\numobs)}{\numobs}} \cdot \SUMJ \min\{r^2, \mu_{j} \radius^2 \} +
\unicon \sqrt{\sum_{j=1}^{\infty} \min \{ r^2, \mu_{j} \radius^2  \}
  \frac{\VarBound \log^3(\numobs)}{\numobs} \sigma^2}.
\end{align}
Since the kernel function is $\kappa^2$-bounded, we have 
$\sum_{j=1}^{\infty} \min \{ r^2, \mu_{j} \radius^2 \} \leq \radius^2 \sum_{j=1}^{\infty} \mu_{j} 
\leq \kappa^2 \radius^2$, which together with the assumption $\sigma^2 \geq 
\kappa^2 \radius^2$ implies that $\Term_1 + \Term_2 \leq 2\unicon 
\sqrt{\sum_{j=1}^{\infty} \min \{ r^2, \mu_{j} \radius^2 \}
  \frac{\VarBound \log^3(\numobs)}{\numobs} \sigma^2}$.
Therefore the bound~\eqref{EqnEquivalent} holds.

It remains to prove the bounds~\eqref{EqnMina} and~\eqref{EqnKento}.
The proofs make use of some elementary properties of the localized
function class $\FuncClassStar(r)$, which we collect here.  For any $h
\in \FuncClassStar(r)$, we have
\begin{subequations}
\label{EqnElementary}  
\begin{align}
\label{eq:Linf-bound}  
|h(x)| & \leq \sqrt{10 \SUMJ \min\{ r^2, \mu_{j} \radius^2 \} }, \qquad
\mbox{and} \\
\label{eq:fact}  
\sum_{j=1}^{\infty} \frac{\theta_{j}^2 }{ \min\{r^2,\mu_{j} \radius^2\}} &
\leq 10, \qquad \mbox{where $h = \SUMJ \theta_j \phi_j$.}
\end{align}
\end{subequations}
See Appendix~\ref{AppProofElementary} for the proof of these
elementary claims.

\subsubsection{Proof of inequality~\eqref{EqnKento}}

We begin by analyzing the term $\Term_2$.  By the triangle inequality,
we have the upper bound $\Term_2 \leq \Termtwoa + \Termtwob$, where
\begin{align*}
 \Termtwoa & \coloneqq \sup_{h \in \FuncClassStar(r)} \Big|
 \|h\|_{\Target}^2 - \Exp_{\Source} [\TruncLike{X} h^2(X)] \Big|,
  \quad  \mbox{and} \\
\Termtwob & \coloneqq \sup_{ h \in \FuncClassStar(r)} \Big|
\Exp_{\Source}[\TruncLike{X} h^2(X)] - \frac{1}{\numobs}
\sum_{i=1}^\numobs \TruncLike{x_i}h^2(x_i) \Big \} \Big|.
\end{align*}
Note that $\Termtwoa$ is a deterministic quantity, measuring the bias
induced by truncation, whereas $\Termtwob$ is the supremum of an
empirical process.  We split our proof into analysis of these two
terms.  In particular, we establish the following bounds:
\begin{subequations}
\begin{align}
\label{EqnBoundTermTwoA}
\Termtwoa & \leq \unicon \sqrt{ \frac{ \VarBound}{ \numobs } }\SUMJ
\min\{ r^2, \mu_{j} \radius^2 \},  \quad  \mbox{and} \\
\label{EqnBoundTermTwoB}
\Termtwob & \leq \unicon \sqrt{ \frac{ \VarBound
    \log^2(\numobs)}{\numobs} } \: \sum_{j=1}^{\infty} \min \{ r^2,
\mu_{j} \radius^2 \} \qquad \text{with probability at least } 1 - \numobs^{-10}.
\end{align}
\end{subequations}
Combining these two bounds yields the claim~\eqref{EqnKento}.


\paragraph{Proof of inequality~\eqref{EqnBoundTermTwoA}:}
We begin by proving the claimed upper bound on $\Termtwoa$.  Note that
\begin{align*}
\Termtwoa &\leq \sup_{h \in \FuncClassStar(r)} \Big| \|h\|_{\Target}^2
- \Exp_{\Target} [ \indicator \{\likeratio(X) \leq \TruncLevel \}
  h^2(X) ] \Big|  + \TruncLevel \cdot  \sup_{h \in \FuncClassStar(r)} \Big| \Exp_{\Source} [ \indicator \{\likeratio(X) > \TruncLevel \}
  h^2(X) ] \Big| \\
 & = \sup_{h \in \FuncClassStar(r)} \Exp_{\Target}
\Big[ \indicator \{ \likeratio(X) > \TruncLevel \} h^2(X) \Big] + \TruncLevel \cdot  \sup_{h \in \FuncClassStar(r)} \Big| \Exp_{\Source} [ \indicator \{\likeratio(X) > \TruncLevel \}
  h^2(X) ] \Big| \\
& \leq \Exp_{\Target} \Big[ \indicator \{ \likeratio (X) > \TruncLevel
  \} \Big ] \cdot \sup_{h \in \FuncClassStar(r)} \|h\|_{\infty}^2 + \TruncLevel \cdot \Exp_{\Source} [ \indicator \{\likeratio(X) > \TruncLevel \} ]  \cdot \sup_{h \in \FuncClassStar(r)} \|h\|_{\infty}^2 \\
& \leq \frac{\VarBound}{\TruncLevel} \cdot 10 \SUMJ \min
\{r^2, \mu_{j} \radius^2 \} + \TruncLevel \cdot \frac{\VarBound}{ (\TruncLevel)^2 } \cdot 10 \SUMJ \min
\{r^2, \mu_{j} \radius^2 \},
\end{align*}
where the last step follows from a combination of Markov's inequality, Chebyshev's inequality, 
and the $\ell_{\infty}$-norm bound~\eqref{eq:Linf-bound}. Recalling
that $\TruncLevel = \sqrt{\numobs \VarBound}$, the
bound~\eqref{EqnBoundTermTwoA} follows.


\paragraph{Proof of the bound~\eqref{EqnBoundTermTwoB}:}

We prove the claimed bound on $\Termtwob$ by first bounding its mean
$\Exp[\Termtwob]$, and then providing a high-probability bound on the
deviation $\Termtwob - \Exp[ \Termtwob ]$.

\medskip
\noindent \underline{Bound on the mean:} By a standard symmetrization
argument (see e.g., Chapter 4 in the book~\cite{wainwright2019high}), we have the upper bound
\begin{align*}
\Exp[\Termtwob] \leq \frac{2}{\numobs} \: \Exp \Big[ \sup_{h \in
    \FuncClassStar(r)} \big| \sum_{i=1}^{\numobs}
  \varepsilon_{i}\TruncLike{x_i} h^2(x_i) \big| \Big],
\end{align*}
where $\{\varepsilon_i \}_{i=1}^\numobs$ is an i.i.d.~sequence of
Rademacher variables.  Now observe that
\begin{align*}
  \sup_{h \in \FuncClassStar(r)} \Big| \sum_{i=1}^{\numobs}
  \varepsilon_i \TruncLike{x_i} h^2(x_i) \Big| & \leq \sup_{
    \substack{\htil, h \in \FuncClassStar(r)}} Z(h, \htil),  \quad 
  \mbox{where}  \quad  Z(h, \htil) \coloneqq \Big| \sum_{i=1}^{\numobs}
  \varepsilon_i \TruncLike{x_i} \htil(x_i) h(x_i) \Big|.
\end{align*}
Writing $\htil = \SUMJ \thetatil_j \phi_j$, we have
\begin{align*}
& Z(h, \htil) = \Big| \sum_{j=1}^{\infty} \thetatil_{j} \big \{
\sum_{i=1}^{\numobs} \varepsilon_{i} \TruncLike{x_i} \phi_{j} (x_i)
h(x_i) \big \} \Big| \\
&\quad  = \Big | \sum_{j=1}^{\infty} \tfrac{ \thetatil_{j} }{ \sqrt{
    \min\{r^2,\mu_{j} \radius^2 \}}} \cdot \sqrt{ \min\{ r^2, \mu_{j} \radius^2\} } \big
\{ \sum_{i=1}^{\numobs} \varepsilon_{i} \TruncLike{x_i}\phi_{j} (x_i)
h(x_i) \big\} \Big| \\
&\quad  \leq \sqrt{10} \, \Bigsqrt{ \sum_{j=1}^{\infty} \min\{r^2,\mu_{j} \radius^2 \}
  \cdot \big \{ \sum_{i=1}^{\numobs} \varepsilon_{i} \TruncLike{x_i}
  \phi_{j} (x_i) h(x_i) \big \}^2 },
\end{align*}
where the final step follows by combining the Cauchy--Schwarz
inequality with the bound~\eqref{eq:fact}.  We now repeat the same
argument to upper bound the inner term involving $h$; in particular,
we have
\begin{align*}
\Big \{ \sum_{i=1}^{\numobs} \varepsilon_{i} \TruncLike{x_i}
\phi_{j}(x_i) h(x_i) \Big \}^2 & = \Big\{ \sum_{k=1}^{\infty}
\theta_{k} \big( \sum_{i=1}^{\numobs} \varepsilon_{i}
\TruncLike{x_i} \phi_{j}(x_i)\phi_{k}(x_i) \big) \Big\}^2 \\
& \leq 10 \cdot \sum_{k=1}^{\infty} \left \{ \min\{r^2, \mu_{k} \radius^2\} \Big(
\sum_{i=1}^{\numobs} \varepsilon_{i} \TruncLike{x_i} \phi_{j}(x_i)
\phi_{k}(x_i) \Big)^2 \right\} .
\end{align*}
Putting together the pieces now leads to the upper bound
\begin{align*}
 & \frac{2}{\numobs} \sup_{h \in \FuncClassStar(r)} \Big| \sum_{i=1}^{\numobs}
  \varepsilon_i \TruncLike{x_i} h^2(x_i) \Big|  \leq \frac{2}{\numobs} \sup_{h, \htil \in
  \FuncClassStar(r)} Z(h, \htil) \\
  & \quad  \leq \; \frac{20}{\numobs} \;
\Bigsqrt{ \sum_{j=1}^{\infty} \min\{r^2,\mu_{j} \radius^2 \} \cdot
  \sum_{k=1}^{\infty} \min\{r^2,\mu_{k} \radius^2 \} \big( \sum_{i=1}^{\numobs}
  \varepsilon_{i} \TruncLike{x_i} \phi_{j}(x_i) \phi_{k}(x_i)
  \big)^2}.
\end{align*}
By taking expectations of both sides and applying Jensen's inequality,
we find that
\begin{align}
\label{EqnRock}  
\Exp[\Termtwob] \leq \frac{20}{\numobs} \Bigsqrt{ \sum_{j=1}^{\infty}
  \min \{r^2, \mu_{j} \radius^2 \} \cdot \sum_{k=1}^{\infty} \min\{r^2,\mu_{k} \radius^2 \}
  \Exp_{X, \varepsilon} \big( \sum_{i=1}^{\numobs} \varepsilon_{i}
  \TruncLike{x_i} \phi_{j}(x_i)\phi_{k}(x_i) \big)^2}.
\end{align}
We now observe that 
\begin{align*}
  \Exp_{X,\varepsilon} \Big[ \big(\sum_{i=1}^{\numobs} \varepsilon_{i}
    \TruncLike{x_i} \phi_{j}(x_i) \phi_{k}(x_i) \big)^2 \Big] & =
  \sum_{i=1}^{\numobs} \Exp_{X,\varepsilon} \Big[ \varepsilon_{i}^2 (\TruncLike{x_i})^2
    \phi_{j}^2(x_i)\phi_{k}^2(x_i) \Big] \\
& \leq \sum_{i=1}^{\numobs} \Exp_{X,\varepsilon}[\likeratio^2(x_i)] \;
  \leq \; \numobs \VarBound,
\end{align*}
where we have used the fact that $\|\phi_j\|_\infty \leq 1$ for all $j
\geq 1$, and that $\TruncLike{x_i} \leq \likeratio(x_i)$.  Substituting this upper bound into our earlier
inequality~\eqref{EqnRock} yields
\begin{align}
\label{EqnMeanBoundTermTwoB} 
    \Exp[ \Termtwob ] & \leq 20 \: \sqrt{\frac{\VarBound}{\numobs}}
    \cdot \SUMJ \min \{r^2, \mu_{j} \radius^2 \}.
\end{align}

\medskip

\noindent \underline{Bounding the deviation term:} Recall that for any
$h \in \FuncClass^{\star}$, we have
$\|h\|_{\infty} \leq \sqrt{ 10 \SUMJ \min\{ r^2, \mu_{j} \radius^2 \}}$.
Consequently, we have
\begin{align*}
\sup_{h \in \FuncClassStar(r)} \Big| \Exp_{\Target}[ \indicator \{
  \likeratio(X) \leq \TruncLevel \} h^2(X) ] - \TruncLike{x_i}
h^2(x_i) \Big| & \leq 10 \TruncLevel \SUMJ \min\{ r^2, \mu_{j} \radius^2 \} \\
& = 10
\sqrt{\numobs \VarBound} \SUMJ \min\{ r^2, \mu_{j} \radius^2 \}.
\end{align*}
In addition, we have
\begin{align*}
\sup_{h \in \FuncClassStar(r)} \sum_{i=1}^{\numobs} \Exp \Big[ \big\{
  \Exp_{\Target}[ \indicator \{ \likeratio(X) \leq \TruncLevel \}
    h^2(X) ] - \TruncLike{x_i} h^2(x_i) \big\}^2 \Big] & \leq \sup_{h
  \in \FuncClassStar(r)} \sum_{i=1}^{\numobs} \Exp
\big[(\TruncLike{x_i})^2
  h^{4}(x_i) \big] \\
  & \leq 100 \numobs \VarBound \big( \SUMJ \min\{ r^2, \mu_{j} \radius^2 \}
\big)^2,
\end{align*}
where we have applied the $\ell_{\infty}$-norm
bound~\eqref{eq:Linf-bound} as well as the $\VarBound$-condition on
the likelihood ratio. These two facts together allow us to apply
Talagrand's concentration results (cf.~Lemma~\ref{lemma:talagrand}) 
and obtain
\begin{align}
\label{EqnDeviationBoundTermTwoB}  
\Prob \Big[ \Termtwob \geq \Exp[ \Termtwob ] + \tfrac{t}{\numobs}
  \Big] & \leq \exp \left( - \frac{t^2}{3000 \numobs \VarBound \big(
  \SUMJ \min\{ r^2, \mu_{j} \radius^2 \} \big)^2 + 900 \sqrt{ \numobs \VarBound}
  \SUMJ \min\{ r^2, \mu_{j} \radius^2\} t } \right).
\end{align}

\medskip
\vspace*{0.1in}

\noindent \underline{Completing the proof of the
  bound~\eqref{EqnBoundTermTwoB}:} We now have the ingredients to
complete the proof of the claim~\eqref{EqnBoundTermTwoB}.  In
particular, by combining the upper bound~\eqref{EqnMeanBoundTermTwoB}
on the mean with the deviation
bound~\eqref{EqnDeviationBoundTermTwoB}, we find that
\begin{align*}
\Termtwob \leq \unicon \sqrt{ \frac{ \VarBound
    \log^2(\numobs)}{\numobs} } \: \sum_{j=1}^{\infty} \min \{ r^2,
\mu_{j} \radius^2 \} \qquad \mbox{with probability at least $1 -
  \numobs^{-10}$,}
\end{align*}
as claimed in equation~\eqref{EqnBoundTermTwoB}.


\subsubsection{Proof of inequality~\eqref{EqnMina}}
Now we focus on the first term $\Term_1 = \sup_{h \in
  \FuncClassStar(r)} \Big| \frac{1}{\numobs} \sum_{i=1}^{\numobs}
\noise_{i} \TruncLike{x_i} h(x_i) \Big|$.  
Repeating the same strategy as in the proof of the bound~\eqref{EqnBoundTermTwoB}, 
we see that 
\begin{align}
	\label{eq:T1}
	\Term_1  \leq \frac{1}{\numobs} \Bigsqrt{ 10 \sum_{j=1}^{\infty} \min\{r^2,\mu_{j} \radius^2 \}\cdot
    \big ( \sum_{i=1}^{\numobs} \noise_i  \TruncLike{x_i} \phi_{j} (x_i) \big)^2
     }.
\end{align}
Fix $\{x_i\}_{i=1}^{\numobs}$. We see that $\big ( \sum_{i=1}^{\numobs} \noise_i  \TruncLike{x_i} \phi_{j} (x_i) \big)^2$ is a quadratic form of independent sub-Gaussian random variables. Apply the Hanson-Wright inequality (e.g., Theorem 6.2.1 in the book~\cite{vershynin2018high}) to obtain that with probability at least $1 - \numobs^{-10}$, 
\begin{align}
	\label{eq:HW}
\big ( \sum_{i=1}^{\numobs} \noise_i  \TruncLike{x_i} \phi_{j} (x_i) \big)^2 \leq c_3
 \sigma^2 \sum_{i=1}^{\numobs} \big [   \TruncLike{x_i} \phi_{j} (x_i) \big] ^2. 
\end{align} 
It remains to control the term $\sum_{i=1}^{\numobs} \big [   \TruncLike{x_i} \phi_{j} (x_i) \big] ^2$. To this end, we invoke Bernstein's inequality to arrive at 
\begin{align*}
	\sum_{i=1}^{\numobs} \big [   \TruncLike{x_i} \phi_{j} (x_i) \big] ^2 \leq \Exp  \left[ \sum_{i=1}^{\numobs} \big [   \TruncLike{x_i} \phi_{j} (x_i) \big] ^2 \right] + c_4 \sqrt{ \alpha \log \numobs } +  c_5 \beta \log \numobs 
\end{align*}
with probability exceeding $1 - \numobs^{-10}$. Here, 
\begin{align*}
	\alpha &\coloneqq \Exp \sum_{i=1}^{\numobs} \mathsf{Var} \Big( \big [   \TruncLike{x_i} \phi_{j} (x_i) \big] ^2 \Big) \leq (\numobs \VarBound )^2 \\
	\beta &\coloneqq \sup_{x} | \big [   \TruncLike{x} \phi_{j} (x) \big] ^2| \leq \TruncLevel^2 = \numobs \VarBound, 
\end{align*}
are the variance and range statistics, respectively. This together with the upper bound \linebreak $
\Exp  \left[ \sum_{i=1}^{\numobs} \big [   \TruncLike{x_i} \phi_{j} (x_i) \big] ^2 \right] \leq \numobs \VarBound$ implies
\begin{align}
	\label{eq:Bernstein}
	\sum_{i=1}^{\numobs} \big [   \TruncLike{x_i} \phi_{j} (x_i) \big] ^2 \leq c_6 \numobs \VarBound \log \numobs. 
\end{align}
Combine the inequalities~\eqref{eq:T1}, \eqref{eq:HW}, and~\eqref{eq:Bernstein} to complete the proof of the inequality~\eqref{EqnMina}.

\section{Discussion}
\label{SecDiscussion}
In this paper, we study RKHS-based nonparametric regression under covariate shift. 
In particular, we focus on two broad families of covariate shift problems: 
(1) the uniformly $\LRBound$-bounded family, and (2) the $\chi^2$-bounded 
family. For the uniformly $\LRBound$-bounded family, we prove that the 
unweighted KRR estimate---with properly chosen regularization 
parameter---achieves optimal rate convergence for a large family of RKHSs with 
regular eigenvalues. In contrast, the na\"\i ve constrained kernel regression estimator 
is provably suboptimal under covariate shift. In addition, for the $\chi^2$-bounded 
family, we propose a likelihood-ratio-reweighted KRR with proper truncation 
that attains the minimax lower bound over this larger family of covariate shift problems. 

Our study is an initial step towards understanding the statistical nature of 
covariate shift. Below we single out several interesting directions to pursue in the 
future. First, it is of great importance to extend the study to other classes of regression 
functions, e.g., high dimensional linear regression, decision trees, etc. Second, while it 
is natural to measure discrepancy between source-target pairs using likelihood ratio, this 
is certainly not the only possibility. Various measures of discrepancy have been proposed in 
the literature, and it is interesting to see what the corresponding optimal procedures are. 
Thirdly, our upper and lower bounds match for regular
          kernels.  It is standard in the kernel regression literature
          to make an assumption regarding the decay of the kernel
          eigenvalue sequence \cite{caponnetto2007optimal,
            yang2017randomized}.  As highlighted by the corollaries to
          our main upper bound, the assumption of a regular kernel is
          general enough to capture the main examples of kernels used
          in practice.  Additionally, we emphasize that in this paper,
          we have adopted a worst-case perspective where we study the
          minimax rate of estimation for a sequence of regular kernel
          eigenvalues, over all $B$-bounded covariate shifts. A more
          instance-dependent perspective which studies these minimax
          rates for a fixed $B$-bounded covariate shift pair is very
          interesting and left for future work.   
Lastly, on a technical end, it is also interesting to see whether one can remove the uniform boundedness of the eigenfunctions in the unbounded likelihood ratio case, and retain the optimal 
rate of convergence.   In the current proof, we mainly use it to develop a localization bound~\eqref{eq:Linf-bound}  which guarantees that any function $h \in \mathcal{H}$ that is $r$-close to $f^\star$ in $\ell_2$ sense (roughly) enjoys an $\ell_\infty$ bound that also scales with $r$.


\subsection*{Acknowledgements}
This work was partially supported by NSF-DMS grant 2015454, NSF-IIS grant 1909365, as well as Office of Naval Research
grant DOD-ONR-N00014-18-1-2640 to MJW.
RP was partially supported by a Berkeley Fellowship and an ARCS Fellowship. 


\bibliographystyle{plain}

\bibliography{../../CS}

\appendix

\section{Proof of Theorem~\ref{ThmLowerBound}}
\label{sec:proof-lower-bound}

Let $\rlower$ be the smallest positive solution to the inequality 
$ \unicon' \sigma^2 \LRBound \frac{d(\delta)}{\numobs} \leq \delta^2$, 
where $\unicon' > 0$ is some large constant. 
We decompose the proof into two steps. First, we construct the lower
bound instance, namely the source, target distributions, and the
corresponding orthonormal basis.  Second, we apply the Fano method to
prove the lower bound.

\paragraph{Step 1: Constructing the lower bound instance.}

Let $\Target$ be a uniform distribution on $\{ \pm 1\}^{+\infty}$. For
the source distribution $\Source$, we set it as follows: with
probability $1 / \LRBound$, we sample $x$ uniformly on $\{ \pm
1\}^{+\infty}$, and with probability $1 - 1 / \LRBound$, we set $x =
0$. It can be verified that the pair $(\Source, \Target)$ has
$\LRBound$-bounded likelihood ratio. Corresponding to the target
distribution $\Target$, we take $\phi_j (x) = x_j$ for every $j \geq
1$.  In other words, we consider a linear kernel.

\paragraph{Step 2: Establishing the lower bound.}

In order to apply Fano's method, we first need to construct a packing
set of the function class $\Ball_\Hilbert(1)$.  For a given radius $r > 0$,
consider the $r$-localized ellipse
\begin{align*}
\Eset(r) \coloneqq \Big \{ \thetavec \mid \SUMJ \frac{ \theta_{j}^2 }{
  \min \{ r^2, \mu_j \} } \leq 1 \Big \}.
\end{align*}
It is straightforward to check that for any $\thetavec \in \Eset(r)$,
the function $f = \SUMJ \theta_{j} \phi_{j}$ lies in $\Ball_\Hilbert(1)$.
This set $\Eset(r)$ admits a large packing set in the $\ell_{2}$-norm,
as claimed in the following lemma.

\begin{lemma}
For any $r \in (0, \rlower]$, there exists a set $\{ \thetavec^1,
  \thetavec^2, \ldots, \thetavec^M \} \subseteq \Eset(r)$ with $\log M
  = \dlower / 64$ such that
\begin{align*}
  \| \thetavec^j - \thetavec^k \|_{2}^{2} \geq \tfrac{r^2}{4} \qquad
  \mbox{for any distinct pair of indices $j \neq k$.}
\end{align*}
\end{lemma}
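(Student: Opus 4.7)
The plan is to construct the packing by restricting to a hypercube embedded in the first $\dlower = d(\rlower)$ coordinates of the ellipse, and then invoke a Gilbert--Varshamov-style lower bound on well-separated binary codes.

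\textbf{Step 1: reduce to the leading block.} By the definition of $d(\cdot)$, we have $\mu_j > r^2$ for all $j < d(r)$, so that $\min\{r^2, \mu_j\} = r^2$ on this initial block. This makes the ellipse constraint in these coordinates degenerate to a sphere of radius $r$, which is much friendlier for packing than a general ellipse. For $j \geq d(r)$ I will simply zero out the coordinate, so those constraints are automatically satisfied. Since we assumed $r \leq \rlower$ and eigenvalues are nonincreasing, we have $d(r) \geq \dlower$, so there is ``enough room'' to get a packing of the required cardinality.

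\textbf{Step 2: candidate construction.} For each $v \in \{-1, +1\}^{\dlower}$, define $\thetavec^v \in \ell^2(\Natural)$ by $\thetavec^v_j = (r / \sqrt{\dlower}) \, v_j$ for $j \leq \dlower$ and $\thetavec^v_j = 0$ otherwise. Then
\[
\SUMJ \frac{(\thetavec^v_j)^2}{\min\{r^2, \mu_j\}}
= \sum_{j=1}^{\dlower} \frac{r^2/\dlower}{r^2} = 1,
\]
so every such $\thetavec^v$ lies in $\Eset(r)$. Moreover, for distinct $v, v'$,
\[
\|\thetavec^v - \thetavec^{v'}\|_2^2 = \frac{4 r^2}{\dlower}\,\mathrm{Ham}(v, v'),
\]
so all that remains is to extract a large subset of $\{-1,+1\}^{\dlower}$ whose pairwise Hamming distances are a constant fraction of $\dlower$.

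\textbf{Step 3: Gilbert--Varshamov packing.} By a standard Gilbert--Varshamov/volume-counting argument on the Hamming cube, one can find $M \geq \exp(\dlower/64)$ codewords in $\{-1,+1\}^{\dlower}$ whose pairwise Hamming distance is at least $\dlower/16$ (the exact constants can be traced back to the entropy bound $1 - H_2(1/16) \geq 1/64$, or looser binomial estimates — either suffices). For such a code,
\[
\|\thetavec^v - \thetavec^{v'}\|_2^2 \;\geq\; \frac{4 r^2}{\dlower} \cdot \frac{\dlower}{16} \;=\; \frac{r^2}{4},
\]
giving the desired separation, with $\log M \geq \dlower/64$ packing elements inside $\Eset(r)$.

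\textbf{Main obstacle.} There is no real conceptual obstacle here — this is a textbook hypercube-in-ellipse packing. The one place to be careful is the boundary index $j = d(r)$: depending on whether $\mu_{d(r)}$ is strictly below $r^2$ or exactly at it, one might prefer to use the first $\dlower - 1$ or $\dlower$ coordinates, but since we only need matching up to constants in $\log M$ and the separation $r^2/4$, this is absorbed into the constant $1/64$. The subsequent (Fano) part of the proof, which is what will ultimately yield the lower bound in Theorem~\ref{ThmLowerBound}, will combine this packing with a KL-bound that carefully uses the fact that under $\Source$ a fraction $1 - 1/\LRBound$ of the samples are the deterministic point $0$ and hence carry no information, which is where the $\LRBound$ factor in the lower bound enters.
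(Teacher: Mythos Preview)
Your proposal is correct and follows the standard Gilbert--Varshamov hypercube-in-ellipse construction. The paper itself does not give a proof of this lemma but simply cites it as Lemma~4 of Yang et al.~\cite{yang2017randomized}; your argument is essentially what that reference contains, so there is nothing substantive to compare. The boundary-index issue you flag at $j = d_\numobs$ is indeed harmless (use $d_\numobs - 1$ coordinates, or absorb into the constant), and the statement $\log M = d_\numobs/64$ in the lemma should of course be read as $\log M \geq d_\numobs/64$, which is all the Fano step needs.
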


\noindent See Lemma~4 in the paper~\cite{yang2017randomized}.

\medskip

Having constructed the packing, we then need to control the pairwise
KL divergence. Fix an index $j \in [M]$. Let $\Source \times
\ObsLike_{j}$ denote the joint distribution over the observed data $\{
(x_i, y_i) \}_{1 \leq i \leq \numobs}$ when the true function arises from 
$\thetavec^j$. Then for any pair of distinct indices $j
\neq k$, we have the upper bound
\begin{align*}
\mathsf{KL} ( \Source \times \ObsLike_{j} \| \Source \times
\ObsLike_{k}) = \frac{\numobs}{ 2 \sigma^2 } \cdot \Exp_{X \sim
  \Source} \Big[ \big( (\thetavec^j - \thetavec^k)^\top \phi(X)
  \big)^2 \Big] & \stackrel{(i)}{=} \frac{\numobs}{ 2 \sigma^2
  \LRBound } \| \thetavec^j - \thetavec^k \|_{2}^{2} \\
& \stackrel{(ii)}{\leq} \frac{ 2 \numobs r^2 }{\sigma^2 \LRBound},
\end{align*}
where step (i) follows from the definition of $\Source$; and step (ii)
follows from applying the triangle inequality, and the fact that
$\|\thetavec\|_2 \leq r$ for all $\thetavec \in \Eset(r)$.

Consequently, we arrive at the lower bound $\inf_{ \fhat } \sup_{
  \TrueFun \in \FuncClass } \Exp [ \| \fhat - \TrueFun
  \|_{\Target}^{2} ] \geq \frac{r^2}{8}$, valid for any sample size
satisfying the condition
\begin{align}
\label{eq:condition-lower-bound}
 \frac{ 2 \numobs r^2 }{ \sigma^2 \LRBound } + \log 2 \leq \frac{1}{2}
 \log M = \frac{\dlower}{128}.
\end{align}
By the definition of a regular kernel, we have $\dlower \geq c
\frac{\numobs \rlower^2 }{ \LRBound \sigma^2}$ for a universal
constant $\unicon$.  Furthermore, since $\rlower$ satisfies the
lower bound $\rlower^2 \geq \unicon' \frac{\sigma^2 \LRBound}{ \numobs}$,
the condition~\eqref{eq:condition-lower-bound} is met by setting $r^2
= \unicon_1 \rlower^2$ for some sufficiently small constant $\unicon_1 > 0$. 



\section{Proof of Theorem~\ref{ThmConstrainedKRR}}
\label{sec:proof-failure}

Let the sample size $\numobs \geq 1$ and likelihood ratio bound
$\LRBound \geq 1$ be given.  Our failure instance relies on a function class
$\FuncClass_\numobs$, together with a pair of distributions $(\Source,
\Target)$.  The function class $\FuncClass_\numobs$ is the unit ball
of a RKHS with finite-rank kernel, over the hypercube $\{-1,
+1\}^\numobs$.  The kernel is given by $ \KerFun(x, z) \coloneqq
\sum_{j=1}^\numobs \mu_j \phi_j(x) \phi_j(z)$. The eigenfunctions and
eigenvalues are
\begin{align*}
\phi_j(x) = x_j, \quad \mbox{and} \quad \mu_j = \frac{1}{j^2}, \quad \mbox{for}~j=1, \ldots, \numobs.
\end{align*}
To be clear, the function class is given by
\begin{align*}
\FuncClass_\numobs \coloneqq \{\, f \coloneqq \sum_{j=1}^\numobs
\theta_j \phi_j \mid \sum_{j=1}^\numobs \tfrac{\theta_j^2}{\mu_j} \leq
1 \,\}.
\end{align*}
The target distribution, $\Target$, is the uniform distribution on
$\{-1, +1\}^\numobs$.  The source distribution is a product
distribution, $\Source = \otimes_{j=1}^\numobs \Source_j$.  We take
$\Source_j$ to be uniform on $\{+1, -1\}$, when $j > 1$.  On the other
hand, the first coordinate follows the distribution
\begin{align*}
\Source_1 \coloneqq \Big(1 - \frac{1}{\LRBound}\Big)\delta_0 +
\frac{1}{\LRBound} \mathrm{Unif}(\{-1, +1\}).
\end{align*}
It is immediate that $(\Source, \Target)$ have $\LRBound$-bounded
likelihood ratio.

Given this set-up, our first step is to reduce the lower bound to the
separation of a single coordinate of the parameter associated with the
empirical risk minimizer and a single coordinate of the parameter
associated with a hard instance in the function class of interest
$\FuncClass_\numobs$.  We introduce a one-dimensional minimization
problem that governs this separation problem and allows us to
establish our result.

\subsection{Reduction to a one dimensional separation problem}

To establish our lower bound it suffices to consider the following
``hard'' function
\[
\hardfn(x) = x_1 = \sum_{j=1}^\numobs (\hardtheta)_j \phi_j(x),
\quad \mbox{where} \quad
\hardtheta = (1, 0, \dots, 0) \in \Real^\numobs.
\]
Since $\phi_j(x) = x_j$ and $\mu_j = j^{-2}$, it follows that
$\hardfn\in \FuncClass_\numobs$. We can write
$\fhaterm(x) = \sum_{j=1}^\numobs (\thetahaterm)_j x_j$, where we defined
\begin{equation}\label{disp:theta-erm}
\thetahaterm \coloneqq \arg \min\Big\{\,
\sum_{i=1}^\numobs \big(\sum_{j=1}^\numobs \theta_j x_{ij} - y_i\big)^2
\mid \sum_{j=1}^\numobs \tfrac{\theta_j^2}{\mu_j} \leq 1\,\Big\}.
\end{equation}
Putting these pieces together, we see that
\begin{equation}\label{ineq:lower-reduce-to-first-coord}
  \sup_{\TrueFun \in \FuncClass_\numobs}
  \Exp \Big[\| \fhaterm - \TrueFun \|_{\Target}^{2}\Big] 
  \geq
  \Exp \Big[\| \fhaterm - \hardfn \|_{\Target}^{2}\Big] 
  \stackrel{{\rm(i)}}{=}
  \Exp \Big[ \|\thetahaterm - \hardtheta\|_2^2\Big]
  \stackrel{{\rm(ii)}}{\geq}
  \Exp \Big[\big((\thetahaterm)_1 - \thetastar_1\big)^2\Big].
\end{equation}
Above, the relation (i) is a consequence of Parseval's theorem,
along with the orthonormality of $\{\phi_j\}_{j=1}^\numobs$ in
$L^2(\Target)$. Inequality (ii) follows by dropping terms corresponding to indices 
indices $j > 1$. Therefore,
in view of display~\eqref{ineq:lower-reduce-to-first-coord},
it suffices to show that:
\begin{equation}\label{ineq:lower-const-prob}
\Prob\Big\{\big((\thetahaterm)_1 - 1\big)^2\geq c_3 \frac{B^3}{\numobs^2}\Big\} \geq \frac{1}{2}.
\end{equation}

\subsection{Proof of one-dimensional separation bound~\eqref{ineq:lower-const-prob}}

We begin with a proof outline.
  
\paragraph{Proof outline}

To establish~\eqref{ineq:lower-const-prob}, we can assume 
$(\thetahaterm)_1 \in [0, 1]$; otherwise, the lower bound follows trivially, provided
$c_2$ is sufficiently small, in particular, $c_2 \leq \sqrt[3]{1/c_3}$.
We introduce a bit of notation:
\[
\EmpCov_{\Source} \coloneqq \frac{1}{\numobs} \sum_{i=1}^\numobs x_i x_i^\top
\quad \mbox{and} \quad
\corvec \coloneqq \frac{1}{\numobs} \sum_{i=1}^\numobs w_i x_i.
\]
Thus, we can further restrict the empirical risk minimization problem~\eqref{disp:theta-erm} to
\begin{align}
\thetatild &\coloneqq \arg \min\Big\{\,
\sum_{i=1}^\numobs \big(\sum_{j=1}^\numobs \theta_j x_{ij} - y_i\big)^2
\mid \sum_{j=1}^\numobs \frac{\theta_j^2}{\mu_j} \leq 1,~~\theta_1 \in [0, 1]\,\Big\} \nonumber\\
&= \arg \min\bigg\{\,
(\theta - \thetastar)^T \EmpCov_{\Source} (\theta - \thetastar) - 2 \corvec^T (\theta - \thetastar)
\mid \sum_{j=1}^\numobs \frac{\theta_j^2}{\mu_j} \leq 1,~~\theta_1 \in [0, 1]\,\bigg\} \label{disp:theta-tild}.
\end{align}
Indeed, in order to prove inequality~\eqref{ineq:lower-const-prob}, it
suffices to show that
\begin{equation}
\label{ineq:lower-const-prob-suff}
\Prob\Big\{\big(\thetatild_1 - 1\big)^2\geq c_3
\frac{B^3}{\numobs^2}\Big\} \geq \frac{1}{2}.
\end{equation}
Let us define an auxiliary function $\keyfunc \colon [0, 1] \to
\Real$, given by
\begin{equation}
\label{defn:key-fun}
\keyfunc(t) \coloneqq \inf\Big\{ (\theta - \thetastar)^T
\EmpCov_{\Source} (\theta - \thetastar) - 2 \corvec^T (\theta -
\thetastar) \mid \sum_{j=1}^\numobs \frac{\theta_j^2}{\mu_j} \leq
1,~~\theta_1 = t \Big\}.
\end{equation}
By definition~\eqref{disp:theta-tild}, the choice $\thetatild$
minimizes this objective, and therefore $\inf_{t \in [0, 1]} g(t) =
g(\tilde \theta_1)$.  The next two lemmas concern the minimum value
and minimizer of $\keyfunc$.  Lemma~\ref{lemma:inf-value}, which we
prove in section~\ref{sec:proof-inf-value}, bounds the minimal value
from above.  Lemma~\ref{lemma:impossible-set}, demonstrates that there
is an interval of length order $\sqrt{B^3/\numobs^2}$ on which the
function $\keyfunc$ is bounded away from the minimal value. We prove
this result in Section~\ref{sec:proof-impossible-set}.

\begin{figure}[h!t]
\centering
\begin{overpic}[scale=0.5,unit=1mm]{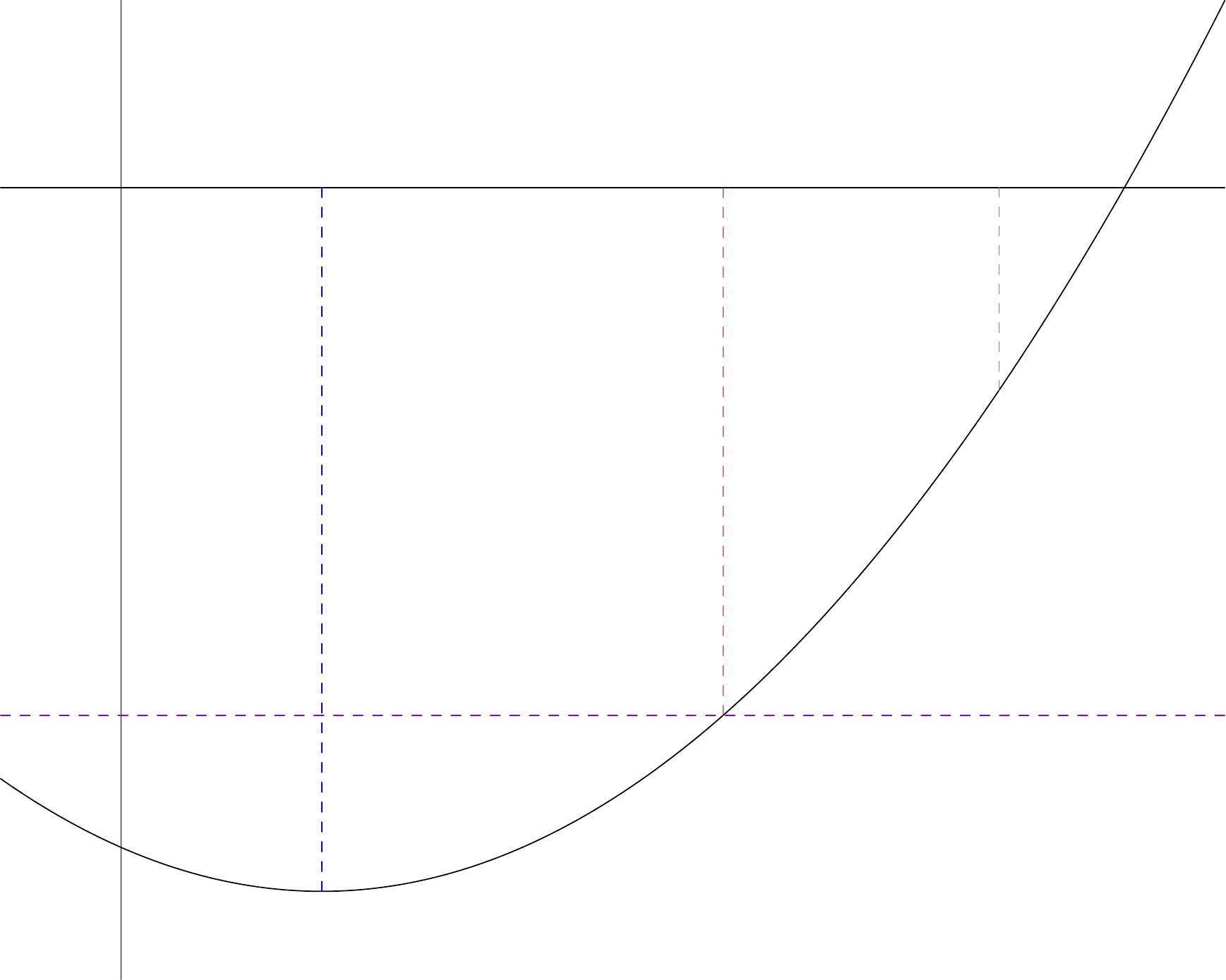}
  \put(74, 35){\scalebox{0.9}{$g(t)$}} \put(-1.5,
  18){\scalebox{0.75}{\color{purple}
      $-\cstar\frac{\sqrt{B}}{\numobs}$}} \put(98,
  61.5){\scalebox{0.75}{$t$}} \put(25,
  66.2){\scalebox{0.8}{\color{blue} $\thetatild_1$}} \put(53,
  66.2){\scalebox{0.8}{\color{red} $\thetastar_1 - \tfrac{\sqrt{c_3
          B^3}}{n}$}} \put(80, 66.2){\scalebox{0.8}{{$\thetastar_1$}}}
\end{overpic}
\caption{Pictorial representation of lower bound argument, separating
  the first coordinate of empirical risk minimizer, $\thetatild_1$,
  from the true population minimizer $\theta^\star_1$.
  Lemma~\ref{lemma:inf-value} establishes the upper bound, depicted in
  purple above, on the minimal value of $\keyfunc$.
  Lemma~\ref{lemma:impossible-set} establishes an interval, shown
  between the red dashed line and $\theta^\star_1$ above, which
  excludes $\thetatild_1$. This allows us to ensure that
  $\theta^\star_1$ and $\thetatild_1$ are sufficiently separated.}
\label{fig:Separation}
\end{figure}

\begin{lemma} [Minimal value of empirical objective]
\label{lemma:inf-value}
There is a constant $\cstar > 0$ such that
\begin{align*}
\keyfunc(\thetatild_1) & \leq - \cstar
\frac{\sqrt{\LRBound}}{\numobs}
\end{align*}
holds with probability at least $3/4$.
\end{lemma}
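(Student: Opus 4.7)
The goal is to show that $g(\tilde\theta_1) \leq -\cstar \sqrt{\LRBound}/\numobs$ with probability at least $3/4$. Since $g(\tilde\theta_1) = \inf_{t \in [0,1]} g(t)$ is itself an infimum, it suffices to exhibit a single feasible $\theta^{(0)} \in \FuncClass_\numobs$ with $\theta^{(0)}_1 \in [0, 1]$ for which the empirical quadratic
\[
\mathcal{Q}(\theta) \coloneqq (\theta - \hardtheta)^\top \EmpCov_\Source (\theta - \hardtheta) - 2 \corvec^\top (\theta - \hardtheta)
\]
is at most $-\cstar \sqrt{\LRBound}/\numobs$. The key structural observation is that $\hardtheta = e_1$ saturates the Hilbert-ball constraint, so any perturbation $\delta = \theta - \hardtheta$ must push $\delta_1 < 0$ to make room; the freed-up budget can then be spent across higher-index coordinates, where each $\corvec_j$ has variance $\sigma^2/\numobs$ for $j \geq 2$, in contrast to $\corvec_1$, whose variance is the much smaller $\sigma^2/(\numobs\LRBound)$.

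With this in mind, I would take $J = \lceil c \sqrt{\LRBound} \rceil$ for a small universal constant $c > 0$, let $\epsilon = C \LRBound^{3/2}/\numobs$ for a large constant $C$, and set
\[
\theta^{(0)}_1 = 1 - \epsilon, \qquad \theta^{(0)}_j = \corvec_j \text{ for } 2 \leq j \leq J, \qquad \theta^{(0)}_j = 0 \text{ for } j > J.
\]
Feasibility reduces to $\sum_{j=2}^J \corvec_j^2 j^2 \leq 2\epsilon - \epsilon^2$. Since each $\corvec_j$ ($j \geq 2$) is sub-Gaussian with variance proxy $\sigma^2/\numobs$, a Hanson-Wright style tail bound yields $\sum_{j=2}^J \corvec_j^2 j^2 \lesssim J^3/\numobs \lesssim \LRBound^{3/2}/\numobs$ with high probability, and choosing $C$ large enough then secures feasibility. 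The assumption $\LRBound \leq c_2 \numobs^{2/3}$ also guarantees $\epsilon \leq 1$.

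I would then decompose $\mathcal{Q}(\theta^{(0)})$ into diagonal and off-diagonal parts of $\EmpCov_\Source$. Because coordinates $j \geq 2$ have $x_{ij}^2 = 1$ almost surely under $\Source_j$, $(\EmpCov_\Source)_{jj} = 1$ exactly for $j \geq 2$; a Chernoff bound combined with $\LRBound \geq c_1(\log \numobs)^2$ yields $(\EmpCov_\Source)_{11} \leq 2/\LRBound$ w.h.p. The diagonal part of $\mathcal{Q}(\theta^{(0)})$ thus reduces to
\[
\epsilon^2 (\EmpCov_\Source)_{11} + \sum_{j=2}^J \corvec_j^2 - 2 \sum_{j=2}^J \corvec_j^2 + 2 \epsilon \corvec_1 \;\leq\; \frac{2\epsilon^2}{\LRBound} - \sum_{j=2}^J \corvec_j^2 + 2 \epsilon |\corvec_1|.
\]
A standard chi-squared lower-tail bound gives $\sum_{j=2}^J \corvec_j^2 \geq c' J/\numobs \asymp \sqrt{\LRBound}/\numobs$ with high probability, furnishing the dominant negative term. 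The other pieces, $2\epsilon^2/\LRBound \asymp C^2 c^6 \LRBound^2/\numobs^2$ and $2\epsilon|\corvec_1| \lesssim \epsilon / \sqrt{\numobs \LRBound}$, are smaller by a factor tending to zero for $c$ small once $\LRBound \leq c_2 \numobs^{2/3}$ is used.

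The main obstacle will be controlling the off-diagonal contributions $-2 \epsilon \sum_{j=2}^J (\EmpCov_\Source)_{1j} \corvec_j$ and $\sum_{j \neq k, j, k \geq 2} (\EmpCov_\Source)_{jk} \corvec_j \corvec_k$, which involve cross-correlations because $\EmpCov_\Source$ and $\corvec$ both depend on $\{x_i\}_{i=1}^\numobs$. The cleanest approach is to condition on $\{x_i\}$: given the covariates, $\corvec$ is sub-Gaussian with covariance $(\sigma^2/\numobs)\EmpCov_\Source$, so these problematic sums become linear and quadratic forms in a sub-Gaussian vector, controllable via Hanson-Wright. Independently, standard concentration for $(\EmpCov_\Source)_{jk}$ (mean zero, variance $O(1/\numobs)$ off-diagonal, $O(1/(\numobs \LRBound))$ for the first row) makes these contributions lower order than $\sqrt{\LRBound}/\numobs$. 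A union bound over the $O(1)$ high-probability events above yields $\mathcal{Q}(\theta^{(0)}) \leq -\cstar \sqrt{\LRBound}/\numobs$ with probability at least $3/4$.
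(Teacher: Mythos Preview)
Your approach is correct and lands on the same $-\cstar\sqrt{\LRBound}/\numobs$ bound, but it differs substantively from the paper's argument. The paper does not work with the empirical covariance directly; instead it first invokes the operator concentration event of Lemma~\ref{LemBasicCov} to replace $\EmpCov_\Source$ by $\PopCov_\Source = \diag(1/\LRBound,1,\ldots,1)$, at the cost of a harmless $\lambda \Mmat^{-1}$ correction with $\lambda \asymp (\log \numobs)/\numobs$. This makes the problem separable in the coordinates $j \geq 2$, and the constrained minimum $\min_{\theta_R \in \Constraint(\uniVar)}\{ \tfrac{3}{2}\|\theta_R\|_2^2 - 2 v_R^\top \theta_R\}$ is then handled by Lagrange duality: the $\sqrt{\LRBound}/\numobs$ scaling is read off from Lemma~\ref{lemma:effective-dimension}, which controls $\sum_{j\ge 2} v_j^2/(1 + \xi/(\LRBound \mu_j))$ at $\xi \approx 1/\LRBound$.

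Your construction is more hands-on: you pick an explicit feasible point supported on $\{1,\ldots,J\}$ with $J \asymp \sqrt{\LRBound}$ and $\theta_j^{(0)} = v_j$ there. This is exactly the localization that the duality computation produces implicitly (once $\xi \approx 1/\LRBound$, the sum in Lemma~\ref{lemma:effective-dimension} is dominated by indices $j \lesssim \sqrt{\LRBound}$), so the two proofs identify the same source of negativity, namely $-\sum_{j=2}^{J} v_j^2 \asymp -\sqrt{\LRBound}/\numobs$. The price you pay is having to control the off-diagonal pieces of $\EmpCov_\Source$ by hand, whereas the paper absorbs them entirely into the single operator-norm event. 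Your conditioning-on-$\{x_i\}$ plus Hanson--Wright plan for the cross terms is workable (all the relevant magnitudes are indeed lower order under $\LRBound \leq c_2 \numobs^{2/3}$), so the argument goes through. The paper's route is cleaner and reuses machinery already in place; yours is more elementary and avoids the dimension-free matrix Bernstein step. A minor slip: your $2\epsilon^2/\LRBound \asymp C^2 c^6 \LRBound^2/\numobs^2$ should read $C^2 \LRBound^2/\numobs^2$; the $c^6$ factor is spurious.
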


\begin{lemma} [Separation from $\theta^\star_1$]
\label{lemma:impossible-set}
There exists a constant $c_3 > 0$ such that
\begin{align}
\label{EqnKeyLower}
\inf_{\substack{t \in [0, 1] \\
    (1 -t)^2 \leq c_3 B^3/n^2}}
\keyfunc(t)
>
-\cstar \frac{\sqrt{\LRBound}}{\numobs}.
\end{align}
where probability at least $3/4$.
\end{lemma}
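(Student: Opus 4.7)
The plan is to apply Lagrangian duality to the inner constrained quadratic program defining $\keyfunc(t)$, and then balance the dual parameter against a refined moment analysis of the resulting random quadratic form. Setting $\eta = 1 - t$, the feasibility constraint $\sum_{j} \theta_j^2/\mu_j \leq 1$ combined with $\theta_1 = t$ yields $\sum_{j \geq 2} \theta_j^2/\mu_j \leq 2\eta - \eta^2$. Block-decomposing $\EmpCov_{\Source} = \left(\begin{smallmatrix} a & b^\top \\ b & C \end{smallmatrix}\right)$, splitting $\corvec = (\corvec_1, \corvec_{-1})$, and writing $\Delta = \theta - \thetastar$ with $\Delta_1 = -\eta$, weak duality applied to the remaining minimization over $\Delta_{-1}$ gives, for every $\lambda \geq 0$,
\begin{align*}
\keyfunc(1-\eta) \geq a\eta^2 + 2\eta \corvec_1 - u^\top (C + \lambda M^{-1})^{-1} u - \lambda(2\eta - \eta^2),
\end{align*}
where $M = \diag(\mu_2, \ldots, \mu_\numobs) = \diag(1/j^2)$ and $u = \corvec_{-1} + \eta b$.

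The crux of the argument is upper bounding the random quadratic form $u^\top (C + \lambda M^{-1})^{-1} u$. The change of basis $\tilde u = M^{1/2} u$, $Z_\lambda = M^{1/2} C M^{1/2} + \lambda I$ converts it into $\tilde u^\top Z_\lambda^{-1} \tilde u$. Conditionally on the design, $\tilde u$ is sub-Gaussian with covariance $(\sigma^2/\numobs) M^{1/2} C M^{1/2}$ plus a deterministic $\eta^2$-order correction coming from $b$. Taking expectation over the noise therefore yields $\Exp_w[\tilde u^\top Z_\lambda^{-1} \tilde u] \leq \tfrac{\sigma^2}{\numobs} \sum_k \tfrac{\sigma_k}{\sigma_k + \lambda} + O(\eta^2/(\numobs \LRBound \lambda))$, where $\{\sigma_k\}$ denote the eigenvalues of $M^{1/2} C M^{1/2}$. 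Using the operator-norm bound $\opnorm{C} = O(1)$ (standard for Rademacher-type designs), one has $\sigma_k \leq c\mu_k = c/k^2$, hence $\sum_k \sigma_k/(\sigma_k+\lambda) \leq c'/\sqrt{\lambda}$, and in summary $\Exp_w[u^\top (C+\lambda M^{-1})^{-1}u] \leq C_0/(\numobs \sqrt{\lambda})$.

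Because the lemma requires only a constant-probability event ($3/4$), Markov's inequality in place of Hanson--Wright suffices to upgrade the expectation to the pointwise estimate $u^\top(C+\lambda M^{-1})^{-1}u \leq C_0'/(\numobs \sqrt{\lambda})$, thereby avoiding logarithmic losses. Uniformity over the continuum $\eta \in [0, \sqrt{c_3}\LRBound^{3/2}/\numobs]$ is achieved by a dyadic discretization into $O(\log \numobs)$ scales together with a union bound, choosing at each scale $\lambda^\star = \Theta((\numobs \eta)^{-2/3})$ to balance $(\numobs \sqrt{\lambda})^{-1}$ against $\lambda \eta$. This yields $\keyfunc(1-\eta) \geq -c_1 (\eta/\numobs^2)^{1/3}$ up to lower-order terms, and substituting the worst-case $\eta = \sqrt{c_3}\LRBound^{3/2}/\numobs$ produces $\keyfunc(1-\eta) \geq -c_1 c_3^{1/6} \sqrt{\LRBound}/\numobs$, which strictly exceeds $-\cstar \sqrt{\LRBound}/\numobs$ provided $c_3 < (\cstar/c_1)^6$.

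The main obstacle is establishing the effective-dimension bound $\sum_k \sigma_k/(\sigma_k + \lambda) \lesssim 1/\sqrt{\lambda}$ with the accompanying deviation estimate, \emph{without} logarithmic losses that would preclude the regime $\LRBound \leq c_2 \numobs^{2/3}$. A secondary subtlety is that the random matrix $C$ does not concentrate to the identity in operator norm (its dimension is comparable to the sample count), but the weaker estimate $\opnorm{C} = O(1)$ is all that is needed to control the effective dimension. The key realization that makes the scaling work is that a constant-probability guarantee permits Markov's inequality on the trace, bypassing sharper concentration tools whose logarithmic factors would be too large to absorb at the top of the admissible range for $\LRBound$.
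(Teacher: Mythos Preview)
Your duality-based strategy is the right one and parallels the paper's, but there is a genuine gap in how you achieve uniformity over $\eta$, and a related difference in how the paper handles the empirical covariance.

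First the gap. You propose Markov's inequality at each of $O(\log n)$ dyadic scales, followed by a union bound. But Markov only gives constant failure probability at each scale; the union over $\log n$ scales then fails with probability of order $\log n$, not $\le 1/4$. If instead you inflate the Markov threshold by a $\log n$ factor at each scale to make the union work, the bound becomes $g(1-\eta)\ge -c_1(\log n)^{2/3}(\eta/n^2)^{1/3}$, and substituting the worst $\eta$ forces $c_3\lesssim(\log n)^{-4}$, which is not a constant. So the dyadic route, as you describe it, does not close. Your closing paragraph misdiagnoses the obstacle: the danger is not logarithmic loss from a concentration inequality on a single quadratic form, but the union over many scales.

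The fix is simple and is essentially what the paper does: use a \emph{single} dual parameter corresponding to the largest relevant $\eta$. With $\lambda^\star=(n\eta_{\max})^{-2/3}$ and $\eta_{\max}=\sqrt{c_3}\,B^{3/2}/n$, weak duality gives, for \emph{every} $\eta\le\eta_{\max}$,
\[
g(1-\eta)\;\ge\;-2\,v_{-1}^\top(C+\lambda^\star M^{-1})^{-1}v_{-1}\;-\;2\eta_{\max}^2\,b^\top(C+\lambda^\star M^{-1})^{-1}b\;-\;2\lambda^\star\eta_{\max}\;-\;2\eta_{\max}|v_1|,
\]
a bound that no longer depends on $\eta$. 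A single Markov application on the first quadratic form, together with $\|C\|=O(1)$ for the effective-dimension estimate $\sum_k\sigma_k/(\sigma_k+\lambda^\star)\lesssim 1/\sqrt{\lambda^\star}$, then yields $g(1-\eta)\ge -C\,c_3^{1/6}\sqrt{B}/n$ uniformly, and choosing $c_3$ small finishes. No discretization is needed.

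The paper organizes this somewhat differently. Rather than work with the random block $C$ and rely on $\|C\|=O(1)$, it first passes to the population covariance via the single high-probability event $\widehat\Sigma_P+\lambda M^{-1}\asymp\Sigma_P+\lambda M^{-1}$ (with $\lambda\asymp(\log n)/n$); this costs an additive $-2\lambda=-O((\log n)/n)$, absorbed by the assumption $B\gtrsim(\log n)^2$. Since $\Sigma_P=\mathrm{diag}(1/B,I_{n-1})$ in this construction, the dual problem is then deterministic up to the weighted sum $\sum_{j\ge 2}v_j^2/\bigl(\tfrac12+\xi/\mu_j\bigr)$; Lemma~\ref{lemma:effective-dimension} controls this sum at a single value of $\xi$, and monotonicity in $1-\theta_1^2$ extends the bound to the whole interval. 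Thus your approach and the paper's share the same balancing $\xi\asymp(n\eta)^{-2/3}$ and the same $c_3^{1/6}\sqrt{B}/n$ endpoint, but the paper avoids both the dyadic union and any appeal to $\|C\|=O(1)$ by reducing to the population problem first.
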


Note that the constant $\cstar$ used in Lemmas~\ref{lemma:inf-value}
and~\ref{lemma:impossible-set} is the same.
Thus---after union bounding over the two error events---with
probability at least $1/2$,
\[
\keyfunc(\thetatild_1) < \inf_{\substack{t \in [0, 1] \\
    (1 -t)^2 \leq c_3 B^3/n^2}}
\keyfunc(t).
\]
Recalling that $\thetatild_1 \in [0, 1]$, we
conclude on this event that $(1 - \thetatild_1)^2 \geq c_3 \tfrac{B^3}{\numobs^2}$,
which furnishes~\eqref{ineq:lower-const-prob-suff}, and
thereby establishes the required result.
To complete the proof, it then remains to establish the auxiliary lemmas
stated above. Before doing so, we record a useful lemma, which will be used 
multiple times later.

\subsection{A useful lemma}
\begin{lemma}\label{lemma:effective-dimension}
For any quantity $\Quantity \in (
\frac{\LRBound}{ 4 \numobs^2 }, \frac{\LRBound}{ 4 }) $, with
probability at least $1 - c_{1} \exp \big( - c_{2}
\frac{\LRBound^{1/2}}{\Quantity^{1/2}} \big)$, one has
\begin{align*}
c \frac{\LRBound^{1/2}}{\Quantity^{1/2}} \frac{1}{\numobs} \leq 
\sum_{j=2}^{\numobs} \frac{ ( \corvec_{j} )^{2} }{ 1  + \frac{\Quantity}{ \LRBound \mu_{j}} } 
\leq C \frac{\LRBound^{1/2}}{\Quantity^{1/2}} \frac{1}{\numobs} .
\end{align*}
Here $c_{1}, c_{2}, C, c > 0$ are absolute constants.
\end{lemma}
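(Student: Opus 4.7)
Observing that $\mu_j = 1/j^2$, the weighted sum becomes
\[
S(\alpha) \coloneqq \sum_{j=2}^{\numobs} \frac{(\corvec_j)^2}{1 + \alpha j^2/\LRBound},
\]
and the natural effective dimension is $d(\alpha) \coloneqq \lfloor \sqrt{\LRBound/\alpha} \rfloor$. The hypothesis $\Quantity \in (\LRBound/(4\numobs^2), \LRBound/4)$ is precisely what forces $2 \leq d(\alpha) \leq \numobs$, so that the effective dimension neither empties out nor exceeds the dictionary length. My plan is first to approximate the deterministic kernel $\sum_{j=2}^{\numobs} (1 + \alpha j^2/\LRBound)^{-1}$ by splitting at $j = d(\alpha)$: for $j \leq d(\alpha)$ each term is $\Theta(1)$ and contributes $\Theta(d(\alpha))$, while for $j > d(\alpha)$ the bound $(1 + \alpha j^2/\LRBound)^{-1} \leq \LRBound/(\alpha j^2)$ and a tail comparison with $\int_{d}^\infty u^{-2}\,du$ also gives $O(\LRBound/(\alpha d(\alpha))) = O(d(\alpha))$. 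This yields the two-sided deterministic bound $\sum_{j=2}^{\numobs} (1+\alpha j^2/\LRBound)^{-1} \asymp \sqrt{\LRBound/\alpha}$.

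Next I would compute $\Exp[S(\alpha)]$. Since $\sigma^2 = 1$ in the regime of Theorem~\ref{ThmConstrainedKRR} and, under $\Source$, $x_{ij} \in \{\pm 1\}$ almost surely for $j \geq 2$, we have $\Exp[\corvec_j^2] = \frac{1}{\numobs^2}\sum_i \Exp[w_i^2] \Exp[x_{ij}^2] = 1/\numobs$ for every $j \geq 2$. Combining this with the deterministic estimate above,
\[
\Exp[S(\alpha)] \;=\; \frac{1}{\numobs}\sum_{j=2}^{\numobs}\frac{1}{1+\alpha j^2/\LRBound} \;\asymp\; \frac{1}{\numobs}\,\sqrt{\frac{\LRBound}{\alpha}},
\]
which is exactly the target order. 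The remaining task is concentration: writing $\corvec = \tfrac{1}{\numobs} X^\top w$ with $X \in \Real^{\numobs \times \numobs}$ collecting the covariates, we have $S(\alpha) = w^\top A w$ where $A = \tfrac{1}{\numobs^2} \sum_{j=2}^\numobs (1+\alpha j^2/\LRBound)^{-1} x_{\cdot j} x_{\cdot j}^\top$. Conditional on $X$, I would apply the Hanson-Wright inequality to $w^\top A w$. This delivers deviation $|S(\alpha) - \Exp[S(\alpha) \mid X]| \lesssim \|A\|_F \sqrt{t} + \|A\|_{op}\, t$ with probability $\geq 1 - 2e^{-t}$, and choosing $t \asymp \sqrt{\LRBound/\alpha}$ gives the claimed tail. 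In parallel I would invoke standard $\pm 1$ concentration (Bernstein on the Gram structure) to show that $\Exp[S(\alpha) \mid X]$ itself concentrates around $\Exp[S(\alpha)]$, and that $\|A\|_{op}$ and $\|A\|_F^2$ are (up to constants) of order $d(\alpha)/\numobs^2$ on a high-probability event; plugging these into Hanson-Wright yields a deviation of order $\sqrt{d(\alpha)}/\numobs$, which is smaller than the mean $d(\alpha)/\numobs$ by a factor $\sqrt{d(\alpha)}$, giving the exponent $\exp(-c\sqrt{\LRBound/\alpha})$ appearing in the lemma.

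The main obstacle I anticipate is obtaining sharp, simultaneous bounds on $\|A\|_F$ and $\|A\|_{op}$ on a high-probability $X$-event. An alternative route that might be cleaner is to condition on $w$ instead: since $x_{ij}$ for $j \geq 2$ are independent across $j$, the variables $\{\corvec_j\}_{j\geq 2}$ are \emph{conditionally independent} given $w$, with conditional variance $\|w\|_2^2/\numobs^2 \asymp 1/\numobs$ and bounded $\psi_2$-norm. One can then concentrate $\sum_j s_j(\corvec_j^2 - \Exp[\corvec_j^2 \mid w])$ via a Bernstein inequality for independent sub-exponentials (with parameter $\sum_j s_j^2 \asymp d(\alpha)$ for the variance and $\max_j s_j \leq 1$ for the range), and separately handle the concentration of $\|w\|_2^2$ around $\numobs$. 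Either route produces a deviation of order $\sqrt{d(\alpha)}/\numobs$ with probability at least $1 - c_1 \exp(-c_2 d(\alpha))$, matching the lemma.
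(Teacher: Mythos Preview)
Your proposal is correct, and your second route (condition on $w$, then apply Bernstein to conditionally independent sub-exponentials) is in substance the paper's argument, carried out with more care about dependencies. The paper proceeds more directly: it asserts that the scaled variables $\sqrt{n}\,v_j$ for $j \geq 2$ are i.i.d.\ standard Gaussian, so that $(\sqrt{n}\,v_j)^2 - 1$ are i.i.d.\ centered sub-exponentials, and then applies Bernstein's inequality in one shot to $\sum_{j=2}^{n} \eta_j\bigl[(\sqrt{n}\,v_j)^2 - 1\bigr]$ with weights $\eta_j = (1 + \alpha j^2/B)^{-1}$. After that, everything reduces to the three deterministic estimates $\max_j \eta_j \leq 1$, $\sum_j \eta_j^2 \lesssim \sqrt{B/\alpha}$, and $\sum_j \eta_j \asymp \sqrt{B/\alpha}$, obtained exactly via the split at $j = d(\alpha)$ that you describe.

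The difference is that you explicitly accommodate the fact that the $v_j$ are only \emph{conditionally} independent given $w$ (they all share the same noise vector), and you add a separate step concentrating $\|w\|_2^2/n$ around $1$. The paper's shortcut of treating the $\sqrt{n}\,v_j$ as unconditionally i.i.d.\ Gaussian suppresses this bookkeeping and yields a one-line concentration step. Your first route, Hanson--Wright on $w^\top A w$ conditional on $X$, would also work in principle, but as you correctly anticipate, getting a sharp bound on $\|A\|_{\mathrm{op}}$ is the delicate part (it is not obviously $O(d(\alpha)/n^2)$ uniformly), and the conditioning-on-$w$ route sidesteps that issue entirely. Net effect: both arguments deliver the same $\exp(-c\sqrt{B/\alpha})$ tail and the same two-sided bound; the paper's version is shorter because it collapses the conditioning, while yours is more explicit about the dependence structure.
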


\begin{proof}
For each $j \geq 1$, define $ \eta_{j} \coloneqq \big( 1 +
\frac{\Quantity}{ \LRBound \mu_{j} } \big)^{-1}$.  We focus on
controlling the term
\begin{align*} 
\sum_{j=2}^{\numobs} \eta_{j} \left[( \sqrt{\numobs}  \corvec_{j}  )^{2} - 1\right].
\end{align*}
Recall from the definition of $v$ that $ \corvec_{j} =
\frac{1}{\numobs} \sum_{i=1}^{\numobs} \noise_{i} x_{ij}.  $ Under the
construction of the lower bound instance, we have $\sqrt{\numobs}
\corvec_{j} \stackrel{\mathsf{i.i.d.}}{\sim} \mathcal{N}(0,1)$.
Therefore $\sqrt{\numobs} \corvec_{j} )^{2} - 1$ is a mean-zero
sub-exponential random variable. This allows us to invoke Bernstein's
inequality to obtain
\begin{align*}
\Prob \left( \left| \sum_{j=2}^{\numobs} \eta_{j} \left[ (
  \sqrt{\numobs} \corvec_{j} )^{2} - 1 \right] \right| \geq t \right)
\leq 2 \exp \left\{ - c \min \left( \frac{ t^{2} }{ \sum_{j \geq 2}
  \eta_{j}^{2} }, \frac{t}{ \max_{j} \eta_{j} } \right) \right\},
\end{align*}
where $c > 0$ is some universal constant. 

We claim that there exist three constants $C_1, C_2, C_3 > 0$ such
that
\begin{subequations}\label{subeq:eta}
\begin{align}
\max_{j = 2,\ldots, \numobs} \eta_{j} & \leq 1; \label{eq:max-eta}
\\ \sum_{j=2}^{\numobs} \eta_{j}^{2} & \leq C_1
\frac{\LRBound^{1/2}}{\Quantity^{1/2}}; \label{eq:variance-eta} \\ C_2
\frac{\LRBound^{1/2}}{\Quantity^{1/2}} \leq \sum_{j=2}^{\numobs}
\eta_{j} & \leq C_3
\frac{\LRBound^{1/2}}{\Quantity^{1/2}} \label{eq:sum-eta}.
\end{align}
\end{subequations}
As a result, we can $t = c_0 \frac{\LRBound^{1/2}}{\Quantity^{1/2}}$
with $c_0$ sufficiently small to arrive at the desired conclusion.

We are left with proving the claimed relations~\eqref{subeq:eta}.  The
first relation~\eqref{eq:max-eta} is trivial. We provide the proof of
the third inequalities~\eqref{eq:sum-eta}; the proof of the middle one
(cf.~relation~\eqref{eq:variance-eta}) follows by a similar
argument. Since $\Quantity \in ( \frac{\LRBound}{ 4 \numobs^2 },
\frac{\LRBound}{ 4 }) $, we can decompose the sum into
\begin{align*}
        \sum_{j=2}^{\numobs} \eta_{j} = \sum_{j=2}^{\lfloor \sqrt{
            \LRBound / \Quantity} \rfloor } \frac{ 1 }{ 1 +
          \frac{\Quantity}{ \LRBound \mu_{j}} } + \sum_{j= \lfloor
          \sqrt{ \LRBound / \Quantity} \rfloor + 1}^{n} \frac{ 1 }{ 1
          + \frac{\Quantity}{ \LRBound \mu_{j}} }.
\end{align*}
Recall that $\mu_{j} = j^{-2}$. We thus have $1 \geq \frac{\Quantity}{
  \LRBound \mu_{j}}$ for $j \leq \lfloor \sqrt{ \LRBound / \Quantity}
\rfloor$ and $1 \leq \frac{\Quantity}{ \LRBound \mu_{j}}$ for $j \geq
\lfloor \sqrt{ \LRBound / \Quantity} \rfloor$. These allow us to upper
bound $\sum_{j=2}^{\numobs} \eta_{j}$ as
\begin{align*}
\sum_{j=2}^{\numobs} \eta_{j} \leq \lfloor \sqrt{ \LRBound /
  \Quantity} \rfloor + \frac{\LRBound}{\Quantity} \sum_{j= \lfloor
  \sqrt{ \LRBound / \Quantity} \rfloor + 1}^{n} \frac{ 1 }{ j^2 } \leq
C_3 \frac{\LRBound^{1/2}}{\Quantity^{1/2}}.
\end{align*}
Similarly, we have the lower bound 
\begin{align*}
\sum_{j=2}^{\numobs} \eta_{j} \geq \sum_{j=2}^{\lfloor \sqrt{ \LRBound
    / \Quantity} \rfloor } \frac{ 1 }{ 1 + \frac{\Quantity}{ \LRBound
    \mu_{j}} } \geq \frac{1}{2} \lfloor \sqrt{ \LRBound / \Quantity}
\rfloor \geq C_2 \frac{\LRBound^{1/2}}{\Quantity^{1/2}}.
\end{align*}
This finishes the proof. 
\end{proof}


\subsection{Proof of auxiliary lemmas}

In order to facilitate the proofs of these lemmas, it is useful to
decompose $\theta = (\theta_1, \thetavecRem) \in \Real \times
\Real^{\numobs - 1}$.  Additionally, we consider the constraint set
\begin{align*}
\Constraint(\firstcoord) \coloneqq\Big\{ \thetavecRem \in
\Real^{\numobs - 1} \mid \sum_{j=2}^\numobs \frac{\theta_j^2}{\mu_j}
\leq 1 - \firstcoord^2\Big\}, \quad \mbox{where}~\firstcoord \in [0,
  1].
\end{align*}
This set plays a key role. In view of definition~\eqref{defn:key-fun},
we can write
\begin{equation}
  \label{eqn:key-fun-constraint-set}
  \keyfunc(\firstcoord) =
  \inf_{\thetavecRem \in \Constraint(\firstcoord)}\,
  \left
  \{
  \begin{bmatrix}
    \firstcoord-1 \\ \thetavecRem
  \end{bmatrix}^{\top}
  \EmpCov_{\Source}
  \begin{bmatrix}
    \firstcoord-1\\ \thetavecRem
  \end{bmatrix}
  - 2
  \begin{bmatrix}
    \firstcoord-1\\
    \thetavecRem
  \end{bmatrix}^\top
  \corvec
    \right
  \},
\end{equation}
where above we have used $\theta^\star = (1, 0, \dots,0)$.
Finally, we will use the diagonal matrix of
kernel eigenvalues $\Mmat \coloneqq \diag(\mu_1, \mu_2, \dots, \mu_\numobs)$,
repeatedly. 

\subsubsection{Proof of Lemma~\ref{lemma:inf-value}}
\label{sec:proof-inf-value}
We show that with probability at least $3/4$,
\begin{equation}\label{omega-condition}
\keyfunc(\uniVar) \leq - \cstar \frac{\sqrt{\LRBound}}{\numobs},
\quad \mbox{where} \quad
\uniVar \coloneqq \sqrt{1 - \frac{\LRBound^{3/2}}{\numobs}}. 
\end{equation}
When $\numobs^2 \geq \LRBound^{3}$, we have $\uniVar \in [0, 1]$.
Since $\inf_{t \in [0, 1]} \keyfunc(t) \leq \keyfunc(\uniVar)$,
the display~\eqref{omega-condition} implies the result.

\paragraph{Proof of bound~\eqref{omega-condition}:}
From the proof of
Lemma~\ref{LemBasicCov}, if we set $\lambda \coloneqq C \frac{\log
  \numobs}{\numobs} $ for some constant $C > 0$, then we have
\begin{align}
\label{eq:key-event-erm}
\tfrac{1}{2}( \PopCov_{\Source} + \lambda \Mmat^{-1} ) \preceq
\EmpCov_{\Source} + \lambda \Mmat^{-1} \preceq \tfrac{3}{2} (
\PopCov_{\Source} + \lambda \Mmat^{-1} ),
\end{align}
with probability at least $1 - \tfrac{1}{\numobs}$.
Consequently, for any vector $\thetavec$ obeying $\thetavec^{\top}
\Mmat^{-1} \thetavec \leq 1$, we have the upper bound
\begin{align*}
\left( \thetavec - \thetavec^{\star} \right)^{\top} \EmpCov_{\Source}
\left( \thetavec - \thetavec^{\star}\right) & = \left( \thetavec -
\thetavec^{\star} \right)^{\top} \left( \EmpCov_{\Source}+\lambda
\Mmat^{-1} \right) \left( \thetavec - \thetavec^{\star} \right) -
\lambda \left( \thetavec - \thetavec^{\star} \right)^{\top} \Mmat^{-1}
\left( \thetavec - \thetavec^{\star} \right) \\ & \leq \frac{3}{2}
\left( \thetavec - \thetavec^{\star} \right)^{\top} (
\PopCov_{\Source} + \lambda \Mmat^{-1} ) \left( \thetavec -
\thetavec^{\star} \right) - \lambda \left( \thetavec -
\thetavec^{\star} \right)^{\top} \Mmat^{-1} \left( \thetavec -
\thetavec^{\star} \right) \\ & = \frac{3}{2} \left( \thetavec -
\thetavec^{\star} \right)^{\top} \PopCov_{\Source} \left( \thetavec -
\thetavec^{\star} \right) + \frac{\lambda}{2} \left( \thetavec -
\thetavec^{\star} \right)^{\top} \Mmat^{-1} \left( \thetavec -
\thetavec^{\star} \right)\\ & \leq \frac{3}{2} \left( \thetavec -
\thetavec^{\star} \right)^{\top} \PopCov_{\Source} \left( \thetavec -
\thetavec^{\star} \right) + 2 \lambda,
\end{align*}
where the final inequality holds since $\left( \thetavec -
\thetavec^{\star} \right)^{\top} \Mmat^{-1} \left( \thetavec -
\thetavec^{\star} \right) \leq 4$.
Applying this result with the
vector $\thetavec = ( \uniVar , \thetavecRem)^\top$ yields
\begin{align}
\keyfunc(\uniVar) & \leq \min_{\thetavecRem \in \Constraint} \left \{
\frac{3}{2} \left[\begin{array}{c} \uniVar-1\\ \thetavecRem
\end{array}\right]^{\top} \PopCov_{\Source} \left[\begin{array}{c}
\uniVar-1\\ \thetavecRem
\end{array}\right]-2\left[\begin{array}{c}
\uniVar-1\\ \thetavecRem
  \end{array}\right]^{\top} \corvec + 2 \lambda \right \}  \nonumber\\
&= \Term_1(\uniVar) + \Term_2(\uniVar) + 2 \lambda +
\min_{\thetavecRem \in \Constraint} \Term_3(\thetavecRem).
\label{ineq:g-omega-upper}
\end{align}
Above, we have defined
\begin{align}
\Term_1(\uniVar) \coloneqq \frac{3}{2}
\frac{(\uniVar-1)^{2}}{\LRBound}, \quad \Term_2(\uniVar) \coloneqq -
2 \corvec_1(\uniVar-1), \quad \mbox{and} \quad \Term_3(\thetavecRem)
\coloneqq \frac{3}{2}\|\thetavecRem\|_{2}^{2} - 2 \corvecRem^{\top}
\thetavecRem,\label{eq:def-T}
\end{align}
and we have used the decomposition $\corvec = (
  \corvec_1, \corvecRem)^\top$.
We now bound each of these three terms in turn.

\paragraph{Controlling the term $\Term_1(\uniVar)$:}

Recall $\uniVar \in [0, 1]$ satisfies the equality $1 - \uniVar^{2} =
\frac{\LRBound^{3/2}}{\numobs}$. Consequently, we have
\begin{align} \label{ineq:t1-upper-final}
  \Term_1(\uniVar) = \frac{3}{2} \frac{(1 - \uniVar)^{2}}{\LRBound} \leq
  \frac{3}{2} \frac{(1 - \uniVar^2)^{2}}{\LRBound}
= \frac{3}{2} \frac{\LRBound^{2}}{\numobs^{2}}.
\end{align}

\paragraph{Controlling the term $\Term_2(\uniVar)$: }
For the second term, by definition of $\uniVar$, we have
\begin{align}
  \label{ineq:t2-upper}
  \Term_2(\uniVar )= 2 \corvec_1 ( 1 -\uniVar)
  \leq 2 |\corvec_1 | (1 - \uniVar) \leq 2 | \corvec_1 | (1 - \uniVar^2)
  =  2 |\corvec_1 | \cdot
  \frac{\LRBound^{3/2}}{\numobs}.
\end{align}
We have the following lemma to control the size of $|\corvec_1|$.

\begin{lemma}\label{lemma:second-moment}
The
following holds true with probability at least 0.99
\begin{align*}
\left| \frac{1}{\numobs} \sum_{i=1}^{\numobs} \noise_{i} x_{i1} \right| 
\leq \frac{10}{ \sqrt{\numobs \LRBound} }.
\end{align*}
\end{lemma}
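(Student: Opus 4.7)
The plan is to prove this via a one-line second-moment calculation followed by Chebyshev's inequality. The quantity $\frac{1}{\numobs}\sum_{i=1}^\numobs \noise_i x_{i1}$ has mean zero (since $\noise_i$ is independent of $x_{i1}$ with $\Exp[\noise_i]=0$), so it suffices to control its variance.

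First I would compute $\Exp_{\Source_1}[x_{i1}^2]$. Under the construction $\Source_1 = (1 - 1/\LRBound)\delta_0 + (1/\LRBound)\mathrm{Unif}(\{-1,+1\})$, we have $x_{i1} \in \{-1, 0, +1\}$ with $x_{i1}^2 = 1$ occurring precisely with probability $1/\LRBound$. Hence $\Exp[x_{i1}^2] = 1/\LRBound$. Combined with the assumption $\sigma^2 = 1$ from Theorem~\ref{ThmConstrainedKRR} and independence of $\noise_i$ from $x_{i1}$, each summand has variance
\begin{align*}
\Exp[\noise_i^2 x_{i1}^2] = \Exp[\noise_i^2]\cdot \Exp[x_{i1}^2] \leq \frac{1}{\LRBound}.
\end{align*}
Since the $\numobs$ summands are i.i.d., the variance of the full sum satisfies $\mathrm{Var}\big(\frac{1}{\numobs}\sum_{i=1}^{\numobs}\noise_i x_{i1}\big) \leq \frac{1}{\numobs \LRBound}$.

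Then I would invoke Chebyshev's inequality with the threshold $t = 10/\sqrt{\numobs \LRBound}$, giving
\begin{align*}
\Prob\Big(\big|\tfrac{1}{\numobs}\textstyle\sum_{i=1}^{\numobs} \noise_i x_{i1}\big| \geq \tfrac{10}{\sqrt{\numobs \LRBound}}\Big) \leq \frac{1/(\numobs \LRBound)}{100/(\numobs \LRBound)} = \frac{1}{100},
\end{align*}
which yields the claim with probability at least $0.99$.

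There is no serious obstacle here: the only point requiring care is tracking that the $\LRBound$-scaling comes entirely from the atypical marginal $\Source_1$ (not from the noise), so that the second moment scales as $1/\LRBound$ rather than $1$. Everything else is a direct application of Chebyshev. A slightly sharper sub-Gaussian tail bound (via Hoeffding or Bernstein conditional on the $x_{i1}$) would give exponential concentration, but only the constant-probability bound stated in the lemma is needed downstream.
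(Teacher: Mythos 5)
Your proof is correct and matches the paper's argument essentially verbatim: both compute $\Exp[(\tfrac{1}{\numobs}\sum_i \noise_i x_{i1})^2] = \tfrac{1}{\numobs \LRBound}$ directly from the construction (using $\sigma^2=1$ and $\Exp[x_{i1}^2]=1/\LRBound$) and then invoke Chebyshev's inequality at threshold $10/\sqrt{\numobs\LRBound}$.
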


\begin{proof}
In view of the construction of the lower bound instance, we can
calculate
\begin{align*}
\Exp \left[ \left( \frac{1}{\numobs} \sum_{i=1}^{\numobs} \noise_{i} x_{i1} 
\right)^{2} \right] = \frac{1}{\numobs^{2}} \sum_{i=1}^{\numobs} \Exp 
\left[ \noise_{i}^{2} x_{i1}^{2} \right] = \frac{1}{ \numobs \LRBound }.
\end{align*}
The claim then follows from Chebyshev's inequality. \end{proof}

Lemma~\ref{lemma:second-moment} demonstrates that
$| \corvec_1 | \leq \tfrac{10}{\sqrt{\numobs
    \LRBound}}$, with probability at least $99/100$. 
Therefore, on this event, the bound~\eqref{ineq:t2-upper} guarantees
\begin{equation}\label{ineq:t2-upper-final}
  \Term_2(\uniVar) \leq 20 \frac{B}{\numobs^{3/2}}.
\end{equation}


\paragraph{Controlling the term $\Term_3(\theta_r)$: }

Our final step is to upper bound the constrained minimum $\min
\limits_{\thetavecRem \in \Constraint} \Term_3(\thetavecRem)$.  Since
this minimization problem is strictly feasible, Lagrange duality
guarantees that
\begin{align*}
\min_{\thetavecRem \in \Constraint} \Term_3(\thetavecRem) & = \min_{
  \thetavecRem} \max_{\dualVar \geq 0} \left\{ \frac{3}{2} \|
\thetavecRem \|_{2}^{2} - 2 \corvecRem^{\top} \thetavecRem + \dualVar (
\thetavecRem^{\top} \MmatRem^{-1} \thetavecRem -
\frac{\LRBound^{3/2}}{\numobs} ) \right\} \\
& = \max_{\dualVar \geq 0} \min_{\thetavecRem} \left\{ \frac{3}{2}
\|\thetavecRem\|_{2}^{2} - 2 \corvecRem^{\top} \thetavecRem + \dualVar
( \thetavecRem^{\top} \MmatRem^{-1}\thetavecRem -
\frac{\LRBound^{3/2}}{\numobs})\right\} .
\end{align*}
The inner minimum is achieved at $\thetavecRem = \left[ \frac{3}{2}
  \IdMat + \dualVar \MmatRem^{-1} \right]^{-1} \corvecRem$, so that
we have established the equality
\begin{align*}
\min_{\thetavecRem \in \Constraint} \Term_3(\thetavecRem) & = \max_{
  \dualVar \geq 0 } \left\{ - \dualVar \tfrac{\LRBound^{3/2}}{\numobs}
- \corvecRem^{\top} \left[ \tfrac{3}{2} \IdMat + \dualVar
  \MmatRem^{-1} \right]^{-1} \corvecRem \right\} \; = \;
\max_{\dualVar \geq 0} \left\{ -\dualVar
\frac{\LRBound^{3/2}}{\numobs} - \sum_{j=2}^{\numobs} \frac{(
  \corvec_{j} )^{2}}{\frac{3}{2} + \tfrac{\dualVar}{\mu_{j}}}
\right\} .
\end{align*}

It remains to analyze the maximum over the dual variable $\dualVar$,
and we split the analysis into two cases.
\begin{itemize}
\item Case 1: First, suppose that the maximum is achieved at some
  $\dualVar^{\star} \geq \tfrac{1}{\LRBound}$.  In this case, we have
\begin{align*}
\max_{\dualVar \geq 0} \left\{ - \dualVar
\tfrac{\LRBound^{3/2}}{\numobs} - \sum_{j=2}^{\numobs} \frac{(
  \corvec_{j} )^{2}}{\tfrac{3}{2} + \tfrac{\dualVar}{\mu_{j}}} \right
\} \leq - \dualVar^{\star} \frac{\LRBound^{3/2}}{\numobs} \leq -
\frac{\LRBound^{1/2}}{\numobs}.
\end{align*}
\item Case 2: Otherwise, we may assume that the maximum achieved at
  some $\dualVar^{\star} \in [0, \tfrac{1}{\LRBound}]$, in which case
  we have
\begin{align*}
\max_{\dualVar \geq 0} \left\{ - \dualVar
\tfrac{\LRBound^{3/2}}{\numobs} - \sum_{j=2}^{\numobs} \frac{(
  \corvec_{j} )^{2}}{\tfrac{3}{2} + \tfrac{\dualVar}{\mu_{j}}}
\right\} \leq - \sum_{j=2}^{\numobs} \frac{( \corvec_{j}
  )^{2}}{\tfrac{3}{2} + \tfrac{\dualVar^{\star}}{\mu_{j}}} \leq
-\sum_{j=2}^{\numobs} \frac{( \corvec_{j} )^{2}}{\tfrac{3}{2} +
  \frac{1}{\LRBound \mu_{j}}} \leq -c \frac{\LRBound^{1/2}}{\numobs},
\end{align*}
where $c > 0$ is a constant. Here in view of
Lemma~\ref{lemma:effective-dimension}, the last inequality holds with
probability at least 0.9 as long as $\LRBound$ is sufficiently large.
\end{itemize}
Combining the two cases, we arrive at the conclusion that as long as 
$\LRBound$ is sufficiently large, with probability at least 0.9, 
\begin{equation}
  \label{ineq:t3-upper-final}
\min_{\thetavecRem \in \Constraint} \Term_3(\thetavecRem) 
\leq -c_1 \frac{\LRBound^{1/2}}{\numobs}
\end{equation}
for some constant $c_1 > 0$.

\paragraph{Completing the proof:}
We can now combine bounds~\eqref{ineq:t1-upper-final},~\eqref{ineq:t2-upper-final},
and~\eqref{ineq:t3-upper-final} on the terms $\Term_1, \Term_2, \Term_3$,
respectively.
Note that when $\numobs \geq 7 B^{3/2} \geq 7$,
all three events and the upper bound~\eqref{ineq:g-omega-upper} hold simultaneously,
with probability
$1 - (\tfrac{1}{\numobs} + \tfrac{1}{100} + \tfrac{1}{10}) \geq 3/4$.
Therefore, we obtain
\begin{align*}
\keyfunc(\uniVar) & \leq \frac{3}{2}
\frac{\LRBound^{2}}{\numobs^{2}} + 20 \frac{\LRBound}{\numobs^{3/2}}
- c_1 \frac{\LRBound^{1/2}}{\numobs} + C \frac{\log \numobs}{\numobs}
\\
 &\leq - \frac{c_1}{2} \frac{\LRBound^{1/2}}{\numobs}.
\end{align*}
The final inequality above holds, since 
$\LRBound \geq c_1 (\log \numobs)^2$ and $\numobs 
\geq 7\LRBound^{3/2}$, for sufficiently large $c_1 > 0$.

\subsubsection{Proof of Lemma~\ref{lemma:impossible-set}}
\label{sec:proof-impossible-set}

We will prove the slightly stronger claim that with probability at least
$3/4$, we have
\begin{equation}\label{ineq:stronger-lower}
\inf_{\substack{t \in [0, 1] \\
    1 -t^2 \leq \beta B^{3/2}/n}}
\keyfunc(t)
>
-\cstar \frac{\sqrt{\LRBound}}{\numobs}
\end{equation}
To see that this proves the claim, note that
$\sup_{t \in [0, 1]} \tfrac{(1-t^2)^2}{(1-t)^2} = 4$
Therefore, if $(1-t)^2 \leq \tfrac{\beta^2}{4} \tfrac{\LRBound^3}{\numobs^2}$,
then $(1-t^2)^2 \leq \beta^2 \tfrac{\LRBound^3}{\numobs^2}$.
Hence,~\eqref{ineq:stronger-lower} proves the claim as soon as
$c_3 = \beta^2/4$. 

\paragraph{Proof of bound~\eqref{ineq:stronger-lower}:}
On the event~\eqref{eq:key-event-erm},
if $\thetavec=(\thetavec_1, \thetavecRem)^{\top}$ obeys
$\thetavec^{\top} \Mmat^{-1} \thetavec \leq 1$, then we have the lower
bound
\begin{align*}
\left( \thetavec - \thetavec^{\star} \right)^{\top}\EmpCov_{\Source}
\left( \thetavec - \thetavec^{\star} \right) & =\left( \thetavec -
\thetavec^{\star} \right)^{\top}\left(\EmpCov_{\Source} + \lambda
\Mmat^{-1} \right) \left( \thetavec - \thetavec^{\star} \right) -
\lambda \left( \thetavec - \thetavec^{\star} \right)^{\top} \Mmat^{-1}
\left( \thetavec - \thetavec^{\star} \right) \\ & \geq
\frac{1}{2}\left( \thetavec - \thetavec^{\star} \right)^{\top} (
\PopCov_{\Source} +\lambda \Mmat^{-1} ) \left( \thetavec -
\thetavec^{\star} \right)-\lambda\left( \thetavec - \thetavec^{\star}
\right)^{\top} \Mmat^{-1} \left( \thetavec - \thetavec^{\star} \right)
\\
& = \frac{1}{2} \left( \thetavec - \thetavec^{\star} \right)^{\top}
\PopCov_{\Source} \left( \thetavec - \thetavec^{\star} \right) -
\frac{\lambda}{2}\left( \thetavec - \thetavec^{\star} \right)^{\top}
\Mmat^{-1} \left( \thetavec - \thetavec^{\star} \right)\\ & \geq
\frac{1}{2}\left( \thetavec - \thetavec^{\star} \right)^{\top}
\PopCov_{\Source} \left( \thetavec - \thetavec^{\star} \right) - 2
\lambda,
\end{align*}
valid when $\lambda = C \tfrac{\log \numobs}{\numobs} $ for some
constant $C > 0$.  Consequently, we have
\begin{align}
\keyfunc(\thetavec_1) & \geq \min_{\thetavecRem \in
  \Constraint(\thetavec_1)} \left \{ \frac{1}{2}\left( \thetavec
- \thetavec^{\star} \right)^{\top} \PopCov_{\Source} \left( \thetavec
- \thetavec^{\star} \right) - 2 \left( \thetavec - \thetavec^{\star}
\right)^{\top}\corvec - 2 \lambda \right \} \nonumber \\
& = \min_{\thetavecRem \in \Constraint(\thetavec_1)} \left \{
\frac{1}{2}\frac{(\thetavec_1 - 1)^{2}}{\LRBound} - 2
\corvec_1(\thetavec_1-1) + \frac{1}{2} \|\thetavecRem\|_{2}^{2} - 2
\corvecRem^{\top} \thetavecRem - 2 \lambda \right \} \nonumber \\
& \geq - \Term_2(\thetavec_1) - 2 \lambda 
+ \min_{\thetavecRem \in \Constraint(\thetavec_1)} 
\{ \frac{1}{2} \|\thetavecRem\|_{2}^{2} - 2
\corvecRem^{\top} \thetavecRem \}. \label{eq:lower-bound-g}
\end{align}
where the last line identifies $- 2
\corvec_1(\thetavec_1-1) $ with $\Term_2(\thetavec_1)$ 
(cf.~definition~\eqref{eq:def-T}).

We separate the proof into two cases---mainly to get around 
the duality issue. 

\paragraph{Case 1: $\thetavec_1=1$. } In this case, we have
\begin{align}\label{eq:boundary-point}
\keyfunc(\thetavec_1) \geq - 2 \lambda = - \frac{2C \log
  \numobs}{\numobs}.
\end{align}

\paragraph{Case 2: $\thetavec_1 \in [0,1)$. } 
We lower bound the terms in equation~\eqref{eq:lower-bound-g} 
in turn. 

\begin{itemize}
\item {\bf Lower bounding $\Term_2(\thetavec_1)$.} 
For any $0 < 1 -
\thetavec_1^{2} \leq \coef \frac{\LRBound^{3/2}}{\numobs}$, 
the following relation 
\begin{align*}
\Term_2(\thetavec_1) \geq - 2 | \corvec_1 | \cdot | \thetavec_1 - 1
| & \stackrel{(i)}{\geq}   -2 | \corvec_1 | \cdot \left( 1 - \thetavec_1^{2} \right) 
 \stackrel{(ii)}{\geq}
-20 \coef  \frac{
  \LRBound}{\numobs^{3/2}}
\end{align*}
holds with probability at least 0.99. 
Here step (i) uses the fact that 
\begin{align*}
|\thetavec_1-1|=|1-\sqrt{1-(1-\thetavec_1^{2})}|\leq1-\thetavec_1^{2}
\qquad
\text{for all }\quad \thetavec_1\in[0,1],
\end{align*}
and step (ii) relies on 
Lemma~\ref{lemma:second-moment} and the constraint
$1-\thetavec_1^{2}\leq \coef \frac{\LRBound^{3/2}}{\numobs}$.

\item {\bf Lower bounding $ \min_{\thetavecRem \in \ConstraintPrime(\thetavec_1)} 
\{ \tfrac{1}{2} \|\thetavecRem\|_{2}^{2} - 2
\corvecRem^{\top} \thetavecRem \}$.}  
When $\thetavec_1 \in [0,1)$, 
the constraint set $\ConstraintPrime(\thetavec_1)$ has
  non-empty interior, and the minimization over $\thetavecRem$ is
  strictly feasible. In this case, strict duality holds so that
\begin{align*}
\min_{\thetavecRem \in \ConstraintPrime(\thetavec_1)} \left\{
\tfrac{1}{2} \|\thetavecRem \|_{2}^{2} - 2 \corvecRem^{\top}
\thetavecRem \right \} & = \max_{\dualVar \geq 0} \left\{ -\dualVar
(1-\thetavec_1^{2}) - \sum_{j=2}^{\numobs} \frac{( \corvec_{j}
  )^{2}}{\tfrac{1}{2} + \tfrac{\dualVar}{\mu_{j}} } \right\} \\
& \geq -\left[ \numobs (1-\thetavec_1^{2}) \right]^{-2/3}
(1-\thetavec_1^{2}) - \sum_{j=2}^{\numobs}
\frac{(\corvec_j)^{2}}{\tfrac{1}{2} + \tfrac{ \left( \numobs
    \left(1-\thetavec_1^{2} \right)\right)^{-2/3}}{\mu_{j}}} \\
& = -\tfrac{\left(1-\thetavec_1^{2}\right)^{1/3}}{\numobs^{2/3}} -
\sum_{j=2}^{\numobs} \tfrac{( \corvec_{j} )^{2}}{\tfrac{1}{2} +
  \tfrac{\left(\numobs\left(1-\thetavec_1^{2}\right)\right)^{-2/3}}{\mu_{j}}}.
\end{align*}
Here the second line arises from a particular choice of $\dualVar$,
namely $ \dualVar = \left(\numobs (1-\thetavec_1^{2})\right)^{-2/3}.$
Since $1 - \thetavec_1^{2} \leq \coef
\frac{\LRBound^{3/2}}{\numobs}$, we further have
\begin{align*}
- \frac{\left(1-\thetavec_1^{2}\right)^{1/3}}{\numobs^{2/3}} - 
\sum_{j=2}^{\numobs}\frac{( \corvec_{j} )^{2}}{\frac{1}{2} + 
\frac{\left(\numobs\left(1-\thetavec_1^{2}\right)\right)^{-2/3}}{\mu_{j}}} & 
\geq - \frac{\coef^{1/3} \LRBound^{1/2}}{\numobs} - 
\sum_{j=2}^{\numobs} \frac{( \corvec_{j} )^{2}}{\frac{1}{2} + 
\frac{\left(\coef \LRBound^{3/2}\right)^{-2/3}}{\mu_{j}}} \\
 & = - \frac{\coef^{1/3}\LRBound^{1/2}}{\numobs} - 
 \sum_{j=2}^{\numobs} \frac{( \corvec_{j} )^{2}}{\frac{1}{2} 
 + \frac{1}{\coef^{2/3} \LRBound \mu_{j}}}\\
 & \geq - \tilde C \frac{\coef^{1/3} \LRBound^{1/2}}{\numobs},
\end{align*}
where $\tilde C > 0$ is a constant. 
Here, since 
$\LRBound$ is sufficiently large, Lemma~\ref{lemma:effective-dimension}
guarantees that  
the last inequality holds with probability at least 0.9. 

\end{itemize}

Combining the two cases above, we arrive at the conclusion 
that for any 
$1 - \thetavec_1^{2} \leq \coef \frac{\LRBound^{3/2}}{\numobs}$,
\begin{align*}
\keyfunc (\thetavec_1) \geq -20 \coef  \frac{
  \LRBound}{\numobs^{3/2}} 
- 2 C \frac{\log \numobs}{ \numobs } 
- \tilde C  \frac{\coef^{1/3} \LRBound^{1/2}}{\numobs}.
\end{align*}
Under the assumptions that $\LRBound \geq C_1 (\log \numobs)^2$ 
and $\numobs \geq C_2 \LRBound^{3/2}$ 
for some sufficiently large constants $C_1, C_2 > 0$, we can choose $\coef$ 
sufficiently small so as to make sure that 
\begin{align*}
\keyfunc(\thetavec_1) \geq  - \cstar
\frac{\LRBound^{1/2}}{\numobs} \qquad \text{for all }\quad 
1 - \thetavec_1^{2} \leq \coef \frac{\LRBound^{3/2}}{\numobs}.
\end{align*}


\section{Proofs of the bounds~\eqref{EqnElementary}}
\label{AppProofElementary}

By definition, any function $h \in \FuncClassStar$ obeys
$
  \hilnorm{h}  \leq 3 \radius.
$
In terms of the expansion $h = \SUMJ \theta_j \phi_j$, this constraint
is equivalent to the bound $\SUMJ \theta_j^2/\mu_j \leq 9 \radius^2 $.  In
addition, the constraint $\|h\|_{\Target} \leq r$ implies that $\SUMJ
\theta_{j}^2 \leq r^2$.  In conjunction, these two inequalities
imply that
\begin{align*}
\SUMJ \frac{\theta_{j}^2 }{ \min\{r^2,\mu_{j} \radius^2 \}} \leq 10,
\end{align*}
as claimed in inequality~\eqref{eq:fact}.

We now use this inequality to establish the
bound~\eqref{eq:Linf-bound}.  For any $x \in \Xspace$, we have
\begin{align*}
|h(x)| = \Big| \SUMJ \theta_{j} \phi_{j}(x) \Big| & = \Big| \SUMJ
\frac{ \theta_{j} }{ \sqrt{ \min\{ r^2, \mu_{j} \radius^2 \} } } \cdot \sqrt{
  \min\{r^2,\mu_{j} \radius^2 \}} \phi_{j}(x) \Big| \\
& \stackrel{(i)}{\leq} \sqrt{ \SUMJ \frac{ \theta_{j}^2 }{
    \min\{r^2,\mu_{j} \radius^2 \}} } \cdot \sqrt{ \SUMJ \min\{r^2,\mu_{j} \radius^2\}
  \phi_{j}^2(x)} \\
& \stackrel{(ii)}{\leq} \sqrt{10 \SUMJ \min\{ r^2,\mu_{j} \radius^2
  \}}.
\end{align*}
Here step (i) uses the Cauchy--Schwarz inequality, whereas step (ii)
follows from the previous claim~\eqref{eq:fact} and the assumption
that $|\phi_{j}(x)| \leq 1$ for all $j \geq 1$.


\newcommand{\MfunNew}{\Mfun^{\mathsf{new}}}
\newcommand{\rcritNew}{\rcrit^{\mathsf{new}}}
\section{Performance guarantees for LR-reweighted KRR}
\label{SecGeneralNoise}
In this section, we present the performance guarantee for the 
LR-reweighted KRR estimate with truncation for all ranges of 
$\sigma^2$. 

Similar to the large noise regime, we define 
\begin{align}
\label{eq:M-function-new}  
\MfunNew(\delta) & \coloneqq \Ccon \sqrt{ \tfrac{\sigma^2 \VarBound
    \log^3(\numobs)}{ \numobs} \KerComplex(\delta, \mu) } \; \left(
\sqrt{\tfrac{\KerComplex(\delta, \mu)}{\sigma^2}} + 1 \right).
\end{align}
Our theorem applies to any solution $\rcritNew >
0 $ to the inequality $\MfunNew(\delta) \leq \delta^2 / 2$.

\begin{theorem}
  \label{ThmReweightedKRRNew}
Consider a kernel with sup-norm bounded
eigenfunctions~\eqref{EqnEigenBounded}, and a source-target pair with
$\Exs_\Source[\likeratio^2(X)] \leq V^2$.  Then the estimate $\fhatrw$
with truncation $\TruncLevel = \sqrt{\numobs \VarBound}$ and
regularization $\lambda \radius^2 = \rcrit^2 / 3$ satisfies the bound
\begin{align}
\|\fhatrw - \TrueFun \|_{\Target}^2 \leq \rcrit^2
\end{align}
with probability at least $1 - c \: \numobs^{-10}$.  
\end{theorem}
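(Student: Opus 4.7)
My plan is to adapt the proof of Theorem~\ref{ThmReweightedKRR} in Section~\ref{SecProofThmReweightedKRR}, modifying only the way the two stochastic pieces in Lemma~\ref{lemma:key} are combined so as to avoid invoking the large-noise hypothesis $\sigma^2 \geq \kappa^2 \radius^2$. Inspecting that proof, the intermediate bounds~\eqref{EqnMina} and~\eqref{EqnKento} on $\Term_1$ and $\Term_2$ already hold without any restriction on $\sigma^2$. In the original argument, the inequality $\KerComplex(r,\mu) \leq \kappa^2 \radius^2 \leq \sigma^2$ was used to collapse $\Term_2$ into $\Term_1$; without that, I would instead keep both contributions and factor them as
\begin{align*}
\Term_1 + \Term_2 \leq \unicon \sqrt{\tfrac{\sigma^2 \VarBound \log^3(\numobs)}{\numobs} \KerComplex(r,\mu)}\,\Bigl( 1 + \sqrt{\tfrac{\KerComplex(r,\mu)}{\sigma^2}} \Bigr) = \MfunNew(r).
\end{align*}
This upgrades Lemma~\ref{lemma:key} so that its uniform bound~\eqref{eq:key-event} continues to hold with $\Mfun$ replaced by $\MfunNew$, with probability at least $1 - c \numobs^{-10}$, and requires no hypothesis on $\sigma^2$.

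With this refined uniform bound in hand, the remainder mirrors Section~\ref{SecProofThmReweightedKRR} almost verbatim. I would set $\regCrit \coloneqq \sqrt{\rcritNew^2 + 3\lambda \radius^2}$, use convexity of $\Hilbert$ and $\GoodFunc(\regCrit)$ to reduce the theorem to proving that the regularized empirical-loss gap $\Term$ is negative on the boundary of $\GoodFunc(\regCrit)$, and split into the same two boundary cases. In Case~1, where $\|\ftil - \TrueFun\|_\Target = \regCrit$ and $\|\ftil - \TrueFun\|_\Hilbert \leq 3\radius$, the refined lemma together with the property $\MfunNew(\regCrit) \leq \regCrit^2/2$ gives the needed sign. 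In Case~2, where $\|\ftil - \TrueFun\|_\Hilbert = 3\radius$, the extra $-3\lambda\radius^2$ arising from the regularization absorbs $\MfunNew(\regCrit)$ exactly as in the original proof, and the argument closes.

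The one genuinely new ingredient is the property $\MfunNew(\regCrit) \leq \regCrit^2/2$. The original proof obtained its analogue from monotonicity of $r \mapsto \Mfun(r)/r$; however, this does \emph{not} carry over to $\MfunNew$, because the term $\KerComplex(r,\mu)/r$ is not monotone in $r$. I would replace it with the stronger claim that $r \mapsto \MfunNew(r)/r^2$ is non-increasing, which holds term-by-term: for each $j$, the summand $\min\{1, \mu_j \radius^2/r^2\}$ of $\KerComplex(r,\mu)/r^2$ is non-increasing in $r$, and likewise $\min\{1/r^2, \mu_j \radius^2/r^4\}$ is non-increasing, so both $\KerComplex(r,\mu)/r^2$ and $\sqrt{\KerComplex(r,\mu)}/r^2$ are non-increasing. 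Since $\regCrit \geq \rcritNew$ and $\MfunNew(\rcritNew) \leq \rcritNew^2/2$ by construction of $\rcritNew$, it follows that $\MfunNew(\regCrit)/\regCrit^2 \leq \MfunNew(\rcritNew)/\rcritNew^2 \leq 1/2$, which is precisely what the two-case analysis requires. This monotonicity fix is the main (and still minor) obstacle; everything else is a clean reuse of the proof of Theorem~\ref{ThmReweightedKRR}.
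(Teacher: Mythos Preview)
Your approach is correct and follows the same template as the paper, but you have taken an unnecessary detour that also costs a constant. The paper exploits the fact that here $\lambda\radius^2 = \rcrit^2/3$ is an \emph{equality} (unlike the inequality in Theorem~\ref{ThmReweightedKRR}) and simply sets $\regCrit = \rcrit$. With this choice, the property $\MfunNew(\regCrit)\leq \regCrit^2/2$ is immediate from the very definition of $\rcrit$ as a solution to $\MfunNew(\delta)\leq \delta^2/2$, so no monotonicity argument is needed at all. The two-case analysis then closes cleanly: Case~1 gives $T \leq \tfrac{1}{2}\rcrit^2 - \rcrit^2 + \lambda\radius^2 = -\tfrac{1}{6}\rcrit^2<0$, and Case~2 gives $T \leq \tfrac{1}{2}\rcrit^2 - 3\lambda\radius^2 = -\tfrac{1}{2}\rcrit^2<0$.

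By contrast, your choice $\regCrit=\sqrt{\rcritNew^2+3\lambda\radius^2}=\sqrt{2}\,\rcritNew$ only delivers $\|\fhatrw-\TrueFun\|_\Target^2\leq \regCrit^2=2\rcritNew^2$, which is twice the bound actually claimed. Your observation that $r\mapsto \MfunNew(r)/r^2$ is non-increasing is correct and would be the right substitute for the $\Mfun(r)/r$-monotonicity if one genuinely needed $\regCrit>\rcrit$; here, however, the paper's direct choice sidesteps the issue entirely and recovers the sharp constant.
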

\begin{proof}
Inspecting the proof of Theorem~\ref{ThmReweightedKRR} (in particular, 
equation~\eqref{eq:M-original-bound}), one has with high probability that 
\begin{align*}
\sup_{ \substack{g \in \GoodFunc(\rcrit)}} \Big \{ \| g - \TrueFun\|_{\Target}^2 + \frac{1}{\numobs}
\sum_{i=1}^{\numobs} \TruncLike{x_i} \big[\big( \TrueFun(x_i) - y_{i}
  \big)^2 - \big( g(x_i) - y_{i} \big)^2 \big] \Big \} \leq \MfunNew(\rcrit).
\end{align*} 
Repeating the analysis in Section~\ref{SecProofThmReweightedKRR} 
with $\regCrit = \rcrit$ yields the desired claim. 
\end{proof}


\section{Expectation bounds for KRR estimates}

In this section, we derive expectation bounds as counterparts to our
previous high probability upper bounds on the KRR estimates.
In Section~\ref{sec:cor-KRR-bounded-upper}, we present an expectation bound
for instances with bounded likelihood ratios, essentially as
a consequence of our previous high probability statement, given in Theorem~\ref{ThmKRR}.
Similarly, in Section~\ref{sec:cor-KRR-unbounded-upper}, we present
an expectation bound for instances which have possibly unbounded likelihood ratios,
but for which the second moment of the likelihood ratios is bounded.
Again, this can be seen as an extension of our previous high-probability
statement on the truncated, reweighted KRR estimator, as stated in Theorem~\ref{ThmReweightedKRR}. 

\subsection{Bounded likelihood ratio}
\label{sec:cor-KRR-bounded-upper}

\begin{theorem}\label{thm:KRR-bounded-exp}
Consider a covariate-shifted regression problem with likelihood ratio
that is $\LRBound$-bounded~\eqref{EqnLRBound} over a Hilbert space
with a $\kappa$-uniformly bounded kernel~\eqref{EqnKerBound}.  There are
universal constants $c_1, c_2 > 0$ such that if
$\lambda \geq c_1 \tfrac{\kappa^2 \log \numobs}{\numobs}$, the KRR estimate
$\fhatkrr$ satisfies the bound
\begin{align}
\label{EqnKRRBoundExp}
\Exp \big[\| \fhatkrr - \TrueFun \|_{\Target}^2\big]
\leq c_2  \Big\{ \lambda  \LRBound \radius^2 +
\frac{\sigma^2  \LRBound}{\numobs} \sum_{j=1}^{\infty}
  \frac{\mu_j }{ \mu_j + \lambda \LRBound }
  + \frac{\sigma^2}{\numobs}
  \Big\}.
\end{align}
\end{theorem}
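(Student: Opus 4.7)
The plan is to build directly on the proof of Theorem~\ref{ThmKRR}---specifically, the bias--variance decomposition~\eqref{eqn:opt-conditions-KRR} and the good-event analysis of Lemma~\ref{LemBasicCov}---while avoiding the $\log \numobs$ factor that appears in the variance term of the high-probability bound~\eqref{EqnKRRBound}. That logarithm enters~\eqref{EqnKRRBound} only through the Hanson--Wright tail bound~\eqref{EqnTermTwoConcentrate} applied to the noise-quadratic form $\Term_2$; since we are now content with an expectation bound, we can sidestep this concentration step entirely by integrating out the noise given the design $\{x_i\}_{i=1}^\numobs$ before invoking any tail estimate.

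I would first recall the inequality $\|\fhatkrr - \TrueFun\|_\Target^2 \leq 2(\Term_1 + \Term_2)$, where $\Term_1$ depends only on $\{x_i\}$ and $\TrueFun$, while $\Term_2$ is a quadratic form in the sub-Gaussian residuals $\noise_i$ conditional on $\{x_i\}$. Using the sub-Gaussian assumption $\Exp[\noise_i^2] \leq \sigma^2$ and a direct computation, one obtains $\Exp[\Term_2 \mid \{x_i\}] \leq \trace(\LamMat)$, where $\LamMat$ is the conditional-covariance operator defined in the proof of Theorem~\ref{ThmKRR}. Combining with the tower property therefore yields
\begin{align*}
\Exp\bigl[\|\fhatkrr - \TrueFun\|_\Target^2\bigr] \;\leq\; 2\,\Exp[\Term_1] \,+\, 2\,\Exp[\trace(\LamMat)].
\end{align*}

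I would next split each of these two expectations into a contribution from $\Event(\lambda)$ and from its complement. On $\Event(\lambda)$, the manipulations already carried out in Section~\ref{sec:proof-KRR} furnish the deterministic inequalities $\Term_1 \leq 2\lambda\LRBound\radius^2$ and $\trace(\LamMat) \leq 2\tfrac{\sigma^2}{\numobs}\sum_j \tfrac{\mu_j}{\mu_j/\LRBound + \lambda}$, which yield the first two summands on the right-hand side of~\eqref{EqnKRRBoundExp}. On the bad event $\Event(\lambda)^c$, I would use worst-case bounds derived purely from the operator inequality $\EmpCov_{\Source}+\lambda\Mmat^{-1} \succeq \lambda\Mmat^{-1}$ together with the kernel bound~\eqref{EqnKerBound}: writing $\Term_1 = \lambda^2\|\Mmat^{1/2}(\Mmat^{1/2}\EmpCov_{\Source}\Mmat^{1/2}+\lambda\IdMat)^{-1}\Mmat^{-1/2}\thetastar\|_2^2$ and using $\sup_j \mu_j \leq \kappa^2$, one obtains $\Term_1 \leq \kappa^2\radius^2$; analogously, $\trace(\LamMat) \leq \tfrac{\sigma^2}{\numobs}\trace\bigl((\EmpCov_{\Source}+\lambda\Mmat^{-1})^{-1}\bigr) \leq \tfrac{\sigma^2 \kappa^2}{\numobs \lambda}$, where the final step uses $\sum_j \mu_j \leq \kappa^2$.

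The final step is to invoke the tail estimate $\Prob[\Event(\lambda)^c] \leq 28\tfrac{\kappa^2}{\lambda}\exp(-\tfrac{\numobs\lambda}{16\kappa^2})$ from Lemma~\ref{LemBasicCov}. The hypothesis $\lambda \geq c_1 \kappa^2 \log\numobs/\numobs$ makes this probability at most $O(\numobs^{-c_1/16 + 1})$, which decays at any desired polynomial rate as $c_1$ is increased. One then checks directly that $\kappa^2\radius^2\, \Prob[\Event(\lambda)^c]$ is dominated by $\lambda\LRBound\radius^2$ and that $\tfrac{\sigma^2 \kappa^2}{\numobs \lambda}\,\Prob[\Event(\lambda)^c]$ is dominated by $\sigma^2/\numobs$, which accounts for the third summand in~\eqref{EqnKRRBoundExp}. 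The only subtlety is choosing $c_1$ large enough that these two absorptions go through simultaneously; since the worst-case bounds grow only polynomially in $\kappa, 1/\lambda, \radius$ while $\exp(-\numobs\lambda/(16\kappa^2))$ supplies arbitrary polynomial decay, this is a routine calibration rather than a new probabilistic step, and therefore not a genuine obstacle.
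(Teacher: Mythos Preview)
Your proposal is correct and follows essentially the same route as the paper: integrate out the noise first to replace $\Term_2$ by the conditional trace $\trace(\LamMat)$ (thereby avoiding the Hanson--Wright step and its $\log \numobs$ factor), then split both $\Exp[\Term_1]$ and $\Exp[\trace(\LamMat)]$ over $\Event(\lambda)$ and its complement, using the deterministic bounds from the proof of Theorem~\ref{ThmKRR} on the good event and the crude operator bounds $\Term_1 \leq \kappa^2\radius^2$, $\trace(\LamMat) \leq \sigma^2\kappa^2/(\numobs\lambda)$ on the bad event together with the tail estimate from Lemma~\ref{LemBasicCov}. The paper carries out exactly these steps, with the same worst-case bounds on $\Event(\lambda)^c$ and the same calibration of $c_1$ (they take $c_1 = 32$).
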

\noindent Inspecting the proof, one may take $c_1 = 32, c_2 = \tfrac{519}{256}$. 
The proof of this result is presented in Section~\ref{sec:proof-cor-exp-bounded}.

An immediate consequence is the following result for regular kernels.
Note that it matches our lower bound (see Theorem~\ref{ThmLowerBound}),
apart from logarithmic factors. 

\begin{corollary}\label{cor:KRR-bounded-exp}

Suppose $\sigma^2 \geq \kappa^2$ and $\radius = 1$. For any $\LRBound \geq 1$ and any pair $(P, Q)$ with $\LRBound$-bounded likelihood
ratio~\eqref{EqnLRBound}, any orthonormal basis $\{\phi_j\}_{j \geq 1}$ of $L^2(Q)$,
and any regular sequence of kernel eigenvalues $\{\mu_j\}_{j \geq 1}$, there exist a
universal constant $C > 0$ such that 
\begin{align}
\label{EqnKRRBoundExpCor}
\Exp \big[\| \fhatkrr - \TrueFun \|_{\Target}^2\big] & \leq C \;
\inf_{\delta  > 0} \Big\{  \delta^2  + 
\sigma^2  \LRBound  d(\delta)  \frac{ \log \numobs }{\numobs} 
  \Big\},
\end{align}
where above $\lambda = \delta_\numobs^2$ where $\delta_\numobs^2 = c \frac{\sigma^2 B d(\delta_\numobs) \log \numobs}{\numobs}$
for a universal constant $c > 0$. 
\end{corollary}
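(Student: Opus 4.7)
\noindent\textit{Proof proposal for Corollary~\ref{cor:KRR-bounded-exp}.}
The plan is to derive the expectation bound directly from Theorem~\ref{thm:KRR-bounded-exp} by invoking the regularity of the eigenvalue sequence, and then showing that the prescribed choice $\lambda = \delta_\numobs^2$ realizes the infimum on the right hand side up to universal constants. First, I would verify that the hypothesis $\lambda \geq c_1 \kappa^2 \log \numobs / \numobs$ of Theorem~\ref{thm:KRR-bounded-exp} is satisfied for $\lambda = \delta_\numobs^2$: since $d(\delta_\numobs) \geq 1$, $\LRBound \geq 1$, and $\sigma^2 \geq \kappa^2$ by assumption, the fixed-point relation $\delta_\numobs^2 = c \sigma^2 \LRBound d(\delta_\numobs) \log \numobs / \numobs$ immediately yields $\lambda \geq c \kappa^2 \log \numobs / \numobs$, which suffices provided $c$ is chosen to exceed $c_1$.

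Next, I would carry over the regular-kernel reduction already performed in the proof of Corollary~\ref{CorKRR}. With the shorthand $\delta^2 = \lambda \LRBound$, the eigenvalue sum appearing in Theorem~\ref{thm:KRR-bounded-exp} splits as
\begin{align*}
\sum_{j=1}^{\infty} \frac{\mu_j}{\mu_j + \lambda \LRBound}
\;\leq\; \sum_{j=1}^{d(\delta)} 1 + \frac{1}{\delta^2} \sum_{j > d(\delta)} \mu_j
\;\leq\; (1 + c')\, d(\delta),
\end{align*}
using the regularity condition~\eqref{EqnRegular} on the tail sum. Substituting into~\eqref{EqnKRRBoundExp} and absorbing the $\sigma^2/\numobs$ term (which is dominated by $\sigma^2 \LRBound d(\delta) \log \numobs / \numobs$ since $\LRBound, d(\delta) \geq 1$) produces the expectation bound
\begin{align*}
\Exp\big[\|\fhatkrr - \fstar\|_\Target^2\big]
\;\leq\; C'\, \Big\{ \delta^2 + \sigma^2 \LRBound\, d(\delta)\, \tfrac{\log \numobs}{\numobs} \Big\}
\qquad \text{for } \delta^2 = \lambda \LRBound.
\end{align*}

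Finally, I would argue that the specific choice $\lambda = \delta_\numobs^2$ (equivalently $\delta^2 = \LRBound \delta_\numobs^2$) matches the infimum up to constants. Since $d(\delta)$ is non-increasing in $\delta$, the function $\delta \mapsto \delta^2 + \sigma^2 \LRBound d(\delta) \log \numobs / \numobs$ has its two terms balanced at the fixed point $\delta_\numobs^2 \asymp \sigma^2 \LRBound d(\delta_\numobs) \log \numobs / \numobs$. A standard argument (applied in past work on KRR such as~\cite{yang2017randomized}) then shows that any $\delta > 0$ gives an upper bound on $\delta_\numobs^2$ up to a universal constant, so the value at the fixed point dominates the infimum up to constants. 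The main (mild) technical obstacle is bookkeeping the constants through the chain $\lambda \leadsto \delta \leadsto \delta_\numobs$ while keeping the dependence on $\LRBound$, $\sigma^2$, and $\log \numobs$ correct; everything else reduces to plug-and-chug from Theorem~\ref{thm:KRR-bounded-exp} and the regularity hypothesis.
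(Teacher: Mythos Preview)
Your approach mirrors the paper's proof exactly: invoke Theorem~\ref{thm:KRR-bounded-exp}, reduce the eigenvalue sum to $d(\delta)$ via the regularity argument of Corollary~\ref{CorKRR}, absorb the $\sigma^2/\numobs$ remainder, verify the lower bound on $\lambda$ using $\sigma^2 \geq \kappa^2$ and $d(\delta_\numobs)\geq 1$, and finish with the monotonicity/balancing argument that the fixed point achieves the infimum up to constants.

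One bookkeeping slip in your final paragraph is worth flagging. With the substitution $\delta^2 = \lambda \LRBound$, setting $\lambda = \delta_\numobs^2$ gives $\delta^2 = \LRBound\,\delta_\numobs^2$, so the bias term you would actually plug into the bound is $\LRBound\,\delta_\numobs^2$---off from the infimum ($\asymp \delta_\numobs^2$) by a factor of $\LRBound$, not a universal constant. The balancing point you correctly identify is $\delta = \delta_\numobs$, which corresponds to $\lambda = \delta_\numobs^2/\LRBound$ (exactly the scaling used in Corollary~\ref{CorKRR}). The paper's proof in fact plugs in $\delta = \delta_\numobs$ and checks the constraint $\delta_\numobs^2 \geq c_1 \LRBound \kappa^2 \tfrac{\log \numobs}{\numobs}$, so the prescription ``$\lambda = \delta_\numobs^2$'' in the statement appears to be a typo rather than something your argument needs to accommodate.
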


\begin{proof}
Following the proof of Corollary~\ref{CorKRR}, 
we obtain from the KRR risk bound of Theorem~\ref{thm:KRR-bounded-exp}, 
\begin{equation}\label{ineq:krr-final-exp-bounded}
\Exp
\big[
\|\fhatkrr - \TrueFun\|_{\Target}^2\big]
\Big]
\leq
C_1 \Big\{ \delta^2  + 
\sigma^2  \LRBound  d(\delta)  \frac{ \log \numobs }{\numobs} 
  \Big\},
  \quad \mbox{where} \quad \delta^2 = \lambda \LRBound,
\end{equation}
for any $\delta^2 \geq c_1 \LRBound \kappa^2 \tfrac{\log \numobs}{\numobs}$.
Adjusting constants so that $c \geq c_1$, our choice of $\delta_\numobs^2$ is valid
since $\sigma^2 \geq \kappa^2$ and $d(\delta_\numobs) \geq 1$.
Moreover, since $\delta^2$ is an increasing function of $\delta$, whereas
$d(\delta)$ is nonincreasing, under the choice of 
$\delta_\numobs^2 = c \frac{\sigma^2 B d(\delta_\numobs) \log \numobs}{\numobs}$,
we have
\begin{equation}\label{ineq:comparison-opt}
\Big\{ \delta_\numobs^2  + 
\sigma^2  \LRBound  d(\delta_\numobs)  \frac{ \log \numobs }{\numobs} 
  \Big\}
\leq
C_2 \inf_{\delta > 0}
\Big\{ \delta^2  + 
\sigma^2  \LRBound  d(\delta)  \frac{ \log \numobs }{\numobs} 
  \Big\},
\end{equation}
for a universal constant $C_2 > 0$. Note that this inequality completes the proof of the result,
with $C = C_1 C_2$. 
\end{proof}
\subsubsection{Proof of Theorem~\ref{thm:KRR-bounded-exp}}
\label{sec:proof-cor-exp-bounded}

Using Parseval's theorem and the optimality conditions for the KRR
problem as given in equation~\eqref{eqn:opt-conditions-KRR}, we have
$\Exp [\| \fhatkrr - \TrueFun \|_{\Target}^2] \leq \Exp[T_1] + \Exp[T_2]$ where
\[
\Term_1 \coloneqq \| \lambda ( \EmpCov_{\Source} + \lambda \Mmat^{-1}
)^{-1} \Mmat^{-1} \thetastar \|_2^2,  \quad  \mbox{and}  \quad 
\Term_2 \coloneqq  \| ( \EmpCov_{\Source} + \lambda \Mmat^{-1} )^{-1}
\big( \frac{1}{\numobs} \sum_{i=1}^{\numobs} \noise_i \phi (x_i)
\big) \|_2^2.
\]
Recall the event
\[
\Event(\lambda) \coloneqq \Big \{ \Mmat^{1/2} \EmpCov_{\Source}
\Mmat^{1/2} + \lambda \IdMat \succeq \frac{1}{2} \big ( \Mmat^{1/2}
\PopCov_{\Source} \Mmat^{1/2} + \lambda \IdMat \big ) \Big \},
\]
as defined in equation~\eqref{eq:good-event-bounded}.
We use this event to bound the two terms.
\paragraph{Bound for $\Term_1$}
Inspecting the proof of Theorem~\ref{ThmKRR} (specifically, see the proof of bound~\eqref{EqnKRRTermBounds}(a)),
it follows that $\Exp[\Term_1 \1_{\Event(\lambda)}] \leq 2 \lambda \LRBound \radius^2$.
On the other hand, from inequality (ii) of the proof of~\eqref{EqnKRRTermBounds}(a), it also holds that
\[
\Exp[\Term_1 \1_{\Event(\lambda)^c}] 
\leq \radius^2 \opnorm{\Mmat} \Prob(\Event(\lambda)^c) \leq \radius \kappa^2 \Prob(\Event(\lambda)^c).  
\]
The final inequality holds since $\opnorm{\Mmat} \leq \trace(\Mmat) = \Exp_Q[\sum_j \mu_j \phi_j^2(x)] \leq \kappa^2$.
Now, note that whenever $\numobs \lambda \geq 32 \kappa^2 \log \numobs$, by Lemma~\ref{LemBasicCov} we have that
\begin{align*}
\Exp[\Term_1 \1_{\Event(\lambda)^c}] 
&\leq \radius^2  \Prob(\Event(\lambda)^c) \\ 
&\leq 28 \lambda \radius^2 
\Big[ \big(\frac{\kappa^2}{\lambda}\big)^2\exp\Big(-\frac{\numobs \lambda}{16 \kappa^2}\Big)\Big]  \\
&\leq \frac{7}{256} \lambda \radius^2.
\end{align*}
Putting the pieces together, we obtain
\begin{equation}\label{ineq:t1-upper-cor-bounded}
\Exp[\Term_1] \leq \frac{519}{256} \lambda \radius^2. 
\end{equation}
\paragraph{Bound for $\Term_2$}
By considering the expectation over $\noise_i$ conditional on the covariates
and following algebraic manipulations similar to the proof
of bound~\eqref{EqnKRRTermBounds}(b), we have
\[
\Exp[\Term_2] \leq \Exp[\widetilde \Term_2],
\quad \mbox{where} \quad
\widetilde \Term_2 \coloneqq
\trace \Big ( \frac{\sigma^2}{ \numobs } ( \EmpCov_{\Source} +
\lambda \Mmat^{-1} )^{-1} \Big ).
\]
Moreover, inspecting the proof of bound~\eqref{EqnKRRTermBounds}(b), we also have
\[
\Exp[\tilde \Term_2 \1_{\Event(\lambda)}]
\leq 2 \frac{\sigma^2 \LRBound }{\numobs} \sum_{j=1}^\infty \frac{\mu_j}{\mu_j + \lambda \LRBound}.  
\]
On the other hand, by bounding $( \EmpCov_{\Source} +
\lambda \Mmat^{-1} )^{-1} \preceq \lambda^{-1} \Mmat$, 
\[
\Exp[\tilde \Term_2 \1_{\Event(\lambda)^c}]
\leq \frac{\sigma^2}{\numobs} \frac{\kappa^2}{\lambda} \Prob(\Event(\lambda)^c)
\leq \frac{7}{256} \frac{\sigma^2}{\numobs}.
\]
The final inequality above is established in the same manner as in the proof
of the bound for $\Term_1$ above, when $\numobs \lambda \geq 32 \kappa^2 \log \numobs$.
Thus, combining the two bounds,
\[
\Exp[\Term_2] \leq 2 \frac{\sigma^2 \LRBound }{\numobs} \sum_{j=1}^\infty \frac{\mu_j}{\mu_j + \lambda \LRBound}
+ \frac{7}{256} \frac{\sigma^2}{\numobs}.
\]
\subsection{Unbounded likelihood ratio} 
\label{sec:cor-KRR-unbounded-upper}

\begin{theorem}\label{thm:KRR-unbounded-exp}
Suppose $\sigma^2 \geq \kappa^2$ and $\radius = 1$.
Consider a kernel with sup-norm bounded
eigenfunctions~\eqref{EqnEigenBounded}, and a source-target pair with
$\Exs_\Source[\likeratio^2(X)] \leq V^2$.
Then, for any orthonormal basis $\{\phi_j\}_{j \geq 1}$ of $L^2(Q)$
and any regular sequence of kernel eigenvalues $\{\mu_j\}_{j \geq 1}$,
there exists a universal constant $C > 0$ such that,
\begin{align}
\label{EqnKRRUnboundExpCor}
\Exp\Big[\|\fhatrw - \TrueFun \|_{\Target}^2 \Big] & \leq C \;
\inf_{\delta  > 0} \Big\{  \delta^2  + 
\VarBound    d(\delta)  \frac{ \log^3 \numobs }{\numobs} 
  \Big\}.
\end{align}
Above, $3 \lambda = \delta_\numobs^2$ where $\delta_\numobs^2$
satisfies the equation $\delta^2 = c \tfrac{\sigma^2 V^2 \log^3 \numobs}{\numobs}$
for a universal constant $c > 0$. 
\end{theorem}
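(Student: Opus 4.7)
The plan is to convert the high-probability bound of Theorem~\ref{ThmReweightedKRR} into an expectation bound by decomposing along a ``good'' event and its complement, in the spirit of the proof of Theorem~\ref{thm:KRR-bounded-exp}. First, I would invoke the regular eigenvalue condition to bound the kernel complexity as $\KerComplex(\delta, \mu) \leq c' d(\delta) \delta^2$, as already shown in the proof of Corollary~\ref{CorReweightedKRR}. Let $\delta_\numobs > 0$ denote the smallest positive solution of the equation $\delta^2 = c\, \tfrac{\sigma^2 \VarBound d(\delta) \log^3 \numobs}{\numobs}$ for a sufficiently large universal $c$. A direct substitution shows that $\Mfun(\delta_\numobs) \leq \delta_\numobs^2 / 2$, so $\delta_\numobs$ is a valid choice of the critical radius $\rcrit$ appearing in Theorem~\ref{ThmReweightedKRR}. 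Taking $\lambda = \delta_\numobs^2 / 3$ and $\TruncLevel = \sqrt{\numobs \VarBound}$, the theorem yields an event $\Event_\numobs$ with $\Prob(\Event_\numobs) \geq 1 - c \numobs^{-10}$ on which $\|\fhatrw - \fstar\|_\Target^2 \leq 2\delta_\numobs^2$.

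Next I would control the expected loss on the bad event $\Event_\numobs^c$. The uniform kernel bound~\eqref{EqnKerBound} together with the reproducing property yields $\|f\|_\Target \leq \|f\|_\infty \leq \kappa \hilnorm{f}$ for every $f \in \Hilbert$, so
\begin{align*}
\|\fhatrw - \fstar\|_\Target^2 \leq 2\kappa^2\bigl(1 + \hilnorm{\fhatrw}^2\bigr).
\end{align*}
Comparing $\fhatrw$ against the zero function in the optimality conditions for objective~\eqref{EqnReweightedKRR} gives the deterministic bound $\lambda \hilnorm{\fhatrw}^2 \leq \tfrac{\TruncLevel}{\numobs} \sum_{i=1}^\numobs y_i^2$; since $y_i$ is sub-Gaussian of order $\sigma$ and $\sigma^2 \geq \kappa^2$, we have $\Exp[y_i^4] \leq C\sigma^4$. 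Applying Cauchy--Schwarz together with $\Prob(\Event_\numobs^c) \leq c\numobs^{-10}$ then shows that $\Exp[\|\fhatrw - \fstar\|_\Target^2 \1_{\Event_\numobs^c}]$ is dominated by a strong negative polynomial in $\numobs$, and in particular is $o(\delta_\numobs^2)$.

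Combining the two contributions gives $\Exp[\|\fhatrw - \fstar\|_\Target^2] \leq C\delta_\numobs^2$. To rewrite this as the claimed infimum, I would argue as in the proof of Corollary~\ref{cor:KRR-bounded-exp}: since $\delta \mapsto \delta^2$ is increasing while $\delta \mapsto d(\delta)$ is nonincreasing, the balancing choice $\delta_\numobs$ achieves, up to a universal constant, the infimum of $\delta^2 + \sigma^2 \VarBound d(\delta) \log^3 \numobs / \numobs$ over $\delta > 0$, which completes the proof.

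The main obstacle is the bad-event analysis: the truncation level $\TruncLevel = \sqrt{\numobs \VarBound}$ and the inverse of $\lambda$ introduce polynomial factors in $\numobs$ and $\VarBound$ that must be dominated by the $\numobs^{-10}$ tail probability. The large tail exponent provides ample slack, but one has to be careful that the sub-Gaussian moment estimates for $y_i$---combined through Cauchy--Schwarz with the probability of $\Event_\numobs^c$---leave a residual that is genuinely negligible compared to the main term $\delta_\numobs^2$, particularly in regimes where $\VarBound$ itself grows with $\numobs$.
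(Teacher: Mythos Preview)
Your proposal is correct and follows essentially the same approach as the paper: split the expectation over the good event from Theorem~\ref{ThmReweightedKRR} and its complement, control the bad event via $\|\fhatrw - \fstar\|_\Target^2 \leq 2\kappa^2(1+\hilnorm{\fhatrw}^2)$ together with an optimality-condition bound on $\hilnorm{\fhatrw}^2$ and Cauchy--Schwarz against the $n^{-10}$ tail, and finish with the balancing argument for the infimum. The only cosmetic difference is that the paper compares $\fhatrw$ with $\fstar$ rather than with zero in the optimality step (obtaining $\lambda\hilnorm{\fhatrw}^2 \leq \lambda + \TruncLevel \tfrac{1}{\numobs}\sum_i \noise_i^2$ instead of your bound with $y_i^2$), and makes the bad-event residual explicit as $O(\sigma^2 \VarBound/\numobs)$, which is dominated by the main term since $d(\delta)\geq 1$; this also dispels your concern about regimes where $\VarBound$ grows with~$\numobs$.
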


Before giving the proof, we emphasize that---apart from logarithmic factors---this bound
is minimax optimal. 

\begin{proof}
By Theorem~\ref{ThmReweightedKRR}, there is an event $\Event$ which has probability
at least $1 -c \numobs^{-10}$ such that the truncated, reweighted estimator $\fhatrw$ satisfies 
\[
\|\fhatrw - \TrueFun \|_{\Target}^2 \leq c_1 \delta^2, 
\]
provided we select $\lambda \asymp \delta^2 \asymp \frac{\sigma^2 V^2 \log^3(\numobs) d(\delta)}{\numobs}$.
Note that under this choice of $\delta^2$, we have
\[
\delta^2 \asymp \inf_{\delta > 0} \Big\{ \delta^2 + \frac{\sigma^2 V^2 \log^3(\numobs) d(\delta)}{\numobs} \Big\}. 
\]
Consequently, there is a constant $c_2 > 0$ such that
\begin{equation}\label{ineq:final-exp-unbounded}
\Exp\Big[\|\fhatrw - \TrueFun \|_{\Target}^2 \Big]
\leq c_2 \; \inf_{\delta > 0} \Big\{ \delta^2 + \frac{\sigma^2 V^2 \log^3(\numobs) d(\delta)}{\numobs} \Big\}
+ \Exp\Big[\|\fhatrw - \TrueFun \|_{\infty}^2 \1_{\mathcal{E}^c} \Big]. 
\end{equation}
By Cauchy-Schwarz,
\[
\|\fhatrw - \TrueFun\|_{\infty}^2 \leq \kappa^2 \|\fhatrw - \TrueFun\|_{\Hilbert}^2
\leq 2 \kappa^2 (1 + \|\fhatrw\|_{\Hilbert}^2). 
\]
Applying the optimality condition of the reweighted estimator $\fhatrw$, we have
\[
\lambda \|\fhatrw\|_{\Hilbert}^2 \leq \lambda + \sqrt{\numobs V^2} \frac{1}{\numobs} \sum_{i=1}^\numobs \noise_i^2. 
\]
Therefore, combining the previous two displays, 
\[
\|\fhatrw - \TrueFun\|_{\infty}^2  \leq 2 \kappa^2 \Big(2 + \frac{\sqrt{\numobs V^2}}{\lambda} \frac{1}{\numobs} \sum_{i=1}^\numobs \noise_i^2\Big). 
\]
It then follows by Cauchy-Schwarz and the sub-Gaussianity of $\noise_i$, that for some constant $c_3 > 0$, 
\begin{align*}
\Exp\Big[\|\fhatrw - \TrueFun \|_{\infty}^2 \1_{\mathcal{E}^c} \Big]
&\leq c_3 \Big(\frac{\kappa^2}{\numobs^{10}} + \frac{\sigma^2 V^2}{\lambda \numobs^4} \kappa^2\Big) \\
&\stackrel{{\rm (i)}}{\leq} c_3 \frac{\sigma^2 V^2 }{\numobs} \Big(\frac{1}{\numobs^{9}} + \frac{\kappa^2}{\lambda} \frac{1}{\numobs^3}\Big) \\
&\stackrel{{\rm (ii)}}{\leq} c_3 \frac{\sigma^2 V^2}{\numobs} \Big( \frac{1}{\numobs^9} + \frac{c_4}{\numobs^2}\Big) \\
& \stackrel{{\rm (iii)}}{\leq} c_5 \frac{\sigma^2 V^2}{\numobs}
\end{align*}
Above, inequality (i) uses $\sigma^2 \geq \kappa^2$ and $V^2 \geq 1$.
Inequality (ii) uses the fact that $\lambda \asymp \delta^2 \asymp
\frac{\sigma^2 V^2 \log^3(\numobs) d(\delta)}{\numobs} \gtrsim \frac{\kappa^2}{\numobs}$.
Finally, inequality (iii) follows by defining $c_5 \geq c_3 (1 + c_4)$.
This bound furnishes the result, since by applying it to the inequality~\eqref{ineq:final-exp-unbounded},
we obtain the result with $C = c_2 + c_5$. 
\end{proof}


\section{Performance of unweighted KRR with unbounded likelihood ratios}
In this section, we present the performance guarantee of the unweighted KRR estimator 
when the likelihood ratios are unbounded. 

\begin{theorem}\label{thm:KRR-unbounded}
Consider a covariate-shifted regression problem with likelihood ratios obeying 
$\Exs_\Source[\likeratio^2(X)] \leq V^2$. 
Then
for any $\lambda \geq 10 \kappa^2 / \numobs$, the KRR estimate
$\fhatkrr$ satisfies the bound
\begin{align}
\label{EqnKRRBound-unbounded}
\| \fhatkrr - \TrueFun \|_{\Target}^2 \leq 2 \sqrt{ \lambda \VarBound \kappa^2} \radius^2 
+ 40 \frac{\sigma^2 \log \numobs }{ \numobs } \cdot  \frac{ \kappa^2 }{ \lambda }
\end{align}
with probability at least $1 - 28 \; \tfrac{\kappa^2}{ \lambda } e^{-\frac{
    \numobs \lambda }{16 \kappa^2}} - \frac{1}{\numobs^{10}}$. 
\end{theorem}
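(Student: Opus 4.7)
The plan is to imitate the proof of Theorem~\ref{ThmKRR} from Section~\ref{sec:proof-KRR}. Starting from the optimality condition~\eqref{eqn:opt-conditions-KRR}, the decomposition $\|\thetahat - \thetastar\|_2^2 \leq 2(T_1 + T_2)$ continues to hold, where $T_1$ is the deterministic bias term and $T_2$ is the noise term. The good event $\Event(\lambda)$ from Lemma~\ref{LemBasicCov} still holds with probability at least $1 - 28(\kappa^2/\lambda)\exp(-\numobs\lambda/(16\kappa^2))$, since the concentration argument used there depends only on the $\kappa$-boundedness of the kernel and not on any uniform bound on $\likeratio$. The conditional Hanson--Wright tail~\eqref{EqnTermTwoConcentrate} also applies verbatim. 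What needs to change is how we extract concrete quantitative control of $T_1$ and $\trace(\LamMat)$, since the operator lower bound $\PopCov_\Source \succeq \IdMat/\LRBound$ is no longer available.

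For the variance, the plan is to bound $\trace(\LamMat) = (\sigma^2/\numobs)\trace(C^{-1}\EmpCov_\Source C^{-1}) \leq (\sigma^2/\numobs)\trace(C^{-1})$ via $\EmpCov_\Source \preceq C$, where $C \coloneqq \EmpCov_\Source + \lambda \Mmat^{-1}$. A crude further bound $C \succeq \lambda\Mmat^{-1}$ yields $C^{-1} \preceq \Mmat/\lambda$, and hence $\trace(C^{-1}) \leq \trace(\Mmat)/\lambda \leq \kappa^2/\lambda$ by the $\kappa$-bounded kernel assumption. This produces $T_2 \leq 20\log(\numobs)\sigma^2\kappa^2/(\numobs\lambda)$, which (after the factor of two from the $2(T_1+T_2)$ decomposition) matches the second term in~\eqref{EqnKRRBound-unbounded}.

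The $\VarBound$-condition on $\likeratio$ enters only through the bias. Let $\ftil$ denote the function whose coefficient vector is $-\lambda C^{-1}\Mmat^{-1}\thetastar$, so that $\fstar+\ftil$ is the noiseless KRR solution for the data $\{(x_i,\fstar(x_i))\}$ and $T_1 = \|\ftil\|_\Target^2$. The crucial step is the Cauchy--Schwarz interchange
\begin{align*}
\|\ftil\|_\Target^2 = \Exp_\Source[\likeratio(X)\ftil^2(X)]
\leq \sqrt{\Exp_\Source[\likeratio^2(X)\ftil^2(X)]}\sqrt{\Exp_\Source[\ftil^2(X)]}
\leq \sqrt{\VarBound}\,\|\ftil\|_\infty\,\|\ftil\|_\Source.
\end{align*}
The optimality inequality $\|\ftil\|_{\EmpCov_\Source}^2 + \lambda\|\fstar+\ftil\|_\Hilbert^2 \leq \lambda\radius^2$ yields both $\|\ftil\|_\Hilbert \leq 2\radius$ (by the triangle inequality and $\|\fstar\|_\Hilbert = \radius$) and $\|\ftil\|_{\EmpCov_\Source}^2 \leq \lambda\radius^2$. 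The reproducing property gives $\|\ftil\|_\infty \leq \kappa\|\ftil\|_\Hilbert \leq 2\kappa\radius$, and on $\Event(\lambda)$ the inclusion $\PopCov_\Source + \lambda\Mmat^{-1} \preceq 2(\EmpCov_\Source + \lambda\Mmat^{-1})$ converts empirical to population source norms, so $\|\ftil\|_\Source^2 \leq 2\|\ftil\|_{\EmpCov_\Source}^2 + 2\lambda\|\ftil\|_\Hilbert^2 \lesssim \lambda\radius^2$. Combining these three pieces yields $T_1 \lesssim \sqrt{\VarBound}\,\kappa\,\radius^2\,\sqrt{\lambda} = \sqrt{\lambda\VarBound\kappa^2}\,\radius^2$, producing the first term in~\eqref{EqnKRRBound-unbounded}.

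The main obstacle is precisely the Cauchy--Schwarz step. Relying only on a second-moment bound on $\likeratio$ unavoidably forces us to involve $\|\ftil\|_\infty$, which for an RKHS element exceeds $\|\ftil\|_\Source$ by a factor scaling with $\kappa$ and the Hilbert norm of $\ftil$. This is what converts the benign, linear-in-$\lambda$ bias of the $\LRBound$-bounded case into $\sqrt{\lambda}$ behavior here, and is the structural reason why unweighted KRR cannot match the minimax rate under only a $\chi^2$ bound: the unweighted estimator has no mechanism to exploit the identity $\Exp_\Source[\likeratio\,\phi\,\phi^\top] = \IdMat$ that drives the sharper analysis of the truncated-reweighted estimator in Theorem~\ref{ThmReweightedKRR}.
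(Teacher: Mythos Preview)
Your argument is correct and recovers the stated bound up to absolute constants. The variance term is handled exactly as in the paper (both routes arrive at $\trace(\Mmat)/\lambda \leq \kappa^2/\lambda$, though you do it directly on the empirical operator while the paper first passes through $\Event(\lambda)$ to the population version). The bias term is where the two arguments diverge. You work \emph{function-wise}: identifying $T_1 = \|\ftil\|_\Target^2$, applying Cauchy--Schwarz to the change of measure to get $\|\ftil\|_\Target^2 \leq \sqrt{\VarBound}\,\|\ftil\|_\infty\,\|\ftil\|_\Source$, and then controlling both factors via the KRR optimality inequality and $\Event(\lambda)$. The paper instead works \emph{operator-wise}: it proves the lower bound $\PopCov_\Source + \lambda \Mmat^{-1} \succeq 2\sqrt{\lambda/(\VarBound\kappa^2)}\,\IdMat$ directly, by applying the same Cauchy--Schwarz to $\Exp_\Source[\likeratio(X)\,(\theta^\top\phi(X))^2]$ for an arbitrary unit vector $\theta$, bounding the resulting fourth moment $\Exp_\Source[(\theta^\top\phi)^4]$ by $\kappa^2\,(\theta^\top\Mmat^{-1}\theta)\,(\theta^\top\PopCov_\Source\theta)$, and finishing with AM--GM. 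That operator inequality feeds straight into the existing Theorem~\ref{ThmKRR} machinery and yields the clean constant $2$; your route gives a somewhat larger constant (roughly $4\sqrt{6}$ after the $2(T_1+T_2)$ doubling) but is arguably more direct, since it never leaves the specific bias function and uses only its variational characterization.
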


Simple algebra shows that the unweighted KRR estimator is still consistent for 
estimation under covariate shift, with a rate of $(\frac{\sigma^2 \VarBound }{ \numobs} )^{1/3}$ 
(ignoring $\kappa^2$ and $\log$ factors). However, unfortunately, this is far from optimal. 

\subsection{Proof of Theorem~\ref{thm:KRR-unbounded}}
In view of the proof of Theorem~\ref{ThmKRR}, we know that 
\begin{align}
\| \fhatkrr - \TrueFun \|_{\Target}^2 & \leq 4 \lambda \radius^2 \opnorm{ \Mmat^{1/2} ( \Mmat^{1/2}
  \PopCov_{\Source} \Mmat^{1/2} + \lambda \IdMat )^{-1} \Mmat^{1/2} } \nonumber \\
  &\quad + 
  40 \frac{\sigma^2 \log \numobs }{ \numobs } \trace \Big(
\Mmat^{1/2} (\Mmat^{1/2} \PopCov_{\Source} \Mmat^{1/2} + \lambda
\IdMat )^{-1} \Mmat^{1/2} \Big)
\end{align}
holds with probability 
at least $1 - 28 \; \tfrac{\kappa^2}{ \lambda } e^{-\frac{
    \numobs \lambda }{16 \kappa^2}} - \frac{1}{\numobs^{10}}$. The proof is 
    finished with the help of the following two bounds:
  \begin{subequations}
  \begin{align}
  	\opnorm{ \Mmat^{1/2} ( \Mmat^{1/2}
  \PopCov_{\Source} \Mmat^{1/2} + \lambda \IdMat )^{-1} \Mmat^{1/2} } & \leq 
  \frac{1}{2} \sqrt{ \frac{ \VarBound  \kappa^2 }{ \lambda } }; \label{eq:unbounded-op}\\
  	\trace \Big(
\Mmat^{1/2} (\Mmat^{1/2} \PopCov_{\Source} \Mmat^{1/2} + \lambda
\IdMat )^{-1} \Mmat^{1/2} \Big) & \leq \frac{ \kappa^2 }{ \lambda } \label{eq:unbounded-trace}.
  \end{align}
 \end{subequations}

\paragraph{Proof of the bound~\eqref{eq:unbounded-trace}: } 
Note that $\Mmat^{1/2} ( \Mmat^{1/2}
  \PopCov_{\Source} \Mmat^{1/2} + \lambda \IdMat )^{-1} \Mmat^{1/2} \preceq 
  \lambda^{-1} \Mmat $. We therefore have 
  \begin{align*}
  \trace \Big(
\Mmat^{1/2} (\Mmat^{1/2} \PopCov_{\Source} \Mmat^{1/2} + \lambda
\IdMat )^{-1} \Mmat^{1/2} \Big) \leq \trace (
\lambda^{-1} \Mmat ) \leq \frac{ \kappa^2 }{ \lambda },
  \end{align*}
  where the last relation uses the fact that $\trace (\Mmat) \leq \kappa^2$.
  
  \paragraph{Proof of the bound~\eqref{eq:unbounded-op}: } 
  We first make the observation that the bound~\eqref{eq:unbounded-op} is 
  equivalent to  
  \begin{align}\label{eq:unbounded-op-new}
  \PopCov_{\Source} + \lambda \Mmat^{-1} \succeq 2 \sqrt { \frac{ \lambda }{
   \VarBound \kappa^2 } } \IdMat.
  \end{align}
  Therefore from now on, we focus on establishing the bound~\eqref{eq:unbounded-op-new}.
  Take an arbitrary vector $\theta$ with $\| \theta \|_2 = 1$. We have 
  \begin{align*}
  1 = \| \theta \|_2^2 & \stackrel{(i)}{=}  \Exs_{\Target} [ ( \theta^\top \phi(X) )^2  ] 
  \stackrel{(ii)}{=} \Exs_{\Source} [ \likeratio(X) \cdot ( \theta^\top \phi(X) )^2  ] \\
  & \stackrel{(iii)}{\leq } \sqrt{ \Exs_{\Source} [  \likeratio^2 (X) ] } \cdot 
  \sqrt{ \Exs_{\Source} [   ( \theta^\top \phi(X) )^4 ] } \\
  & \stackrel{(iv)}{=} \sqrt{ \VarBound} \cdot \sqrt{ \Exs_{\Source} [   ( \theta^\top \phi(X) )^4 ] } .
  \end{align*}
 Here, the identity $(i)$ follows from the fact that $\Exs_\Target [ \phi(X) \phi (X)^\top ] = 
 \IdMat$, the relation $(ii)$ changes the measure from $\Target$ to $\Source$, the 
 inequality $(iii)$ is due to Cauchy-Schwarz, and the equality $(iv)$ uses the definition 
 of $\VarBound$. Apply the Cauchy-Schwarz inequality again to obtain 
 \begin{align*}
 	( \theta^\top \phi(X) )^2 \leq \| \Mmat^{-1/2} \theta \|_2^2 \cdot \| \Mmat^{1/2} \phi(X) \|_2^2 
	\leq \kappa^2 \| \Mmat^{-1/2} \theta \|_2^2, 
 \end{align*}
 where the second inequality relies on the fact that $\sup_x  \| \Mmat^{1/2} \phi(x) \|_2^2 
 \leq \kappa^2$. Take the above inequalities together to yield 
 \begin{align*}
 	\Exs_{\Source} [   ( \theta^\top \phi(X) )^2 ] \geq \frac{1 }{ \VarBound \kappa^2 
	\cdot ( \theta^\top  \Mmat^{-1} \theta ) } \qquad \text{for any }\theta \text{ with } \| \theta \|_2 = 1.
 \end{align*}
 As a result, one has 
 \begin{align*}
 	\theta^\top ( \PopCov_{\Source} + \lambda \Mmat^{-1} ) \theta \geq 
	\frac{1 }{ \VarBound \kappa^2 
	\cdot ( \theta^\top  \Mmat^{-1} \theta ) } + \lambda \theta^\top  \Mmat^{-1} \theta \geq 
	 2 \sqrt { \frac{ \lambda }{
   \VarBound \kappa^2 } }.
 \end{align*}
 Since this inequality holds for any unit-norm $\theta$, we establish the claim~\eqref{eq:unbounded-op-new}.

\section{Auxiliary lemmas}

The following lemma provides concentration inequalities for the sum of
independent self-adjoint operators, which appeared in the
work~\cite{minsker2017some}.
\begin{lemma}
\label{lemma:bernstein-operator}
Let $\Zmat_1, \Zmat_2, \ldots, \Zmat_\numobs$ be i.i.d.~self-adjoint
operators on a separable Hilbert space.  Assume that $\Exp [\Zmat_1] =
\bm{0}$, and $\opnorm{\Zmat_1} \leq L$ for some $L > 0$. Let $\Vmat$ be a
positive trace-class operator such that $\Exp [\Zmat_1^2] \preceq
\Vmat$, and $\opnorm{ \Exp [\Zmat_1^2] } \leq R $. Then one has
\begin{align*}
\Prob \Big( \opnorm{ \frac{1}{\numobs} \sum_{i=1}^{\numobs} \Zmat_{i}
} \geq t \Big) \leq \frac{28 \trace(\Vmat) }{ R } \cdot
\exp \Big( -\frac{  \numobs t^2 / 2}{ R + L t / 3}
\Big),\qquad \text{for all }t \geq \sqrt{ R / \numobs} +
L / (3 \numobs).
\end{align*}
\end{lemma}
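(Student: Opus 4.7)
}
The plan is to adapt the classical matrix Bernstein inequality to the separable Hilbert-space setting using a dimension-free argument, following the framework of Minsker~\cite{minsker2017some}. The workflow has three phases: (i) a single-variable moment generating function (MGF) bound in the operator sense, (ii) a tensorization step via the operator-exponential Laplace transform, and (iii) a conversion of the resulting matrix tail bound into a scalar tail bound that replaces ambient dimension by an intrinsic dimension of the form $\trace(\Vmat)/R$.

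First, for each summand $\Zmat_i$, I would establish the operator MGF bound
\[
\Exp\exp(\theta \Zmat_i) \;\preceq\; \exp\!\Bigl(\tfrac{\theta^{2}/2}{1-\theta L/3}\,\Exp \Zmat_i^{2}\Bigr), \qquad 0 < \theta < 3/L,
\]
by Taylor-expanding $e^{\theta \Zmat_i}$, dominating $\Exp \Zmat_i^{k}\preceq L^{k-2}\Exp \Zmat_i^2$ for $k\ge 2$ using the bound $\opnorm{\Zmat_i}\le L$, summing the geometric series, and comparing the result with the exponential. Next, I would use the Laplace transform route: for the partial sum $\Smat_\numobs \coloneqq \sum_{i=1}^\numobs \Zmat_i$ and any $t > 0$,
\[
\Prob\!\bigl[\opnorm{\Smat_\numobs/\numobs}\ge t\bigr] \;\le\; \inf_{\theta>0}\;e^{-\theta \numobs t}\,\Exp\trace\exp(\theta \Smat_\numobs),
\]
and invoke the Hilbert-space version of the Lieb–Tropp tensorization inequality (available for trace-class operators on separable Hilbert space, cf.~\cite{minsker2017some}) to conclude
\[
\Exp\trace\exp(\theta \Smat_\numobs)\;\le\;\trace\exp\!\Bigl(\numobs\cdot\tfrac{\theta^{2}/2}{1-\theta L/3}\,\Vmat\Bigr).
\]

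Second, the dimension-free conversion is the crux. Writing $\Amat \coloneqq \numobs g(\theta)\Vmat$ with $g(\theta)=\tfrac{\theta^{2}/2}{1-\theta L/3}$ and using that $\Vmat$ is trace-class and positive, I would apply the elementary scalar inequality $e^{\mu}-1 \le \mu\,(e^{\|\Amat\|}-1)/\|\Amat\|$ for $0\le\mu\le\|\Amat\|$, eigenvalue by eigenvalue, to get
\[
\trace\bigl(e^{\Amat}-\IdMat\bigr) \;\le\; \tfrac{\trace(\Amat)}{\opnorm{\Amat}}\bigl(e^{\opnorm{\Amat}}-1\bigr) \;\le\; \tfrac{\trace(\Vmat)}{R}\,e^{\numobs g(\theta) R},
\]
which is exactly the step where the intrinsic-dimension prefactor $\trace(\Vmat)/R$ replaces the ambient dimension. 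Finally, I would make the Bernstein-type choice $\theta = \numobs t/(\numobs R + Lt/3)$, verify that it satisfies $\theta<3/L$ under the stated condition $t\ge\sqrt{R/\numobs}+L/(3\numobs)$, and track the numerical constants to arrive at the claimed prefactor $28\,\trace(\Vmat)/R$ and exponent $-\numobs t^{2}/2/(R+Lt/3)$.

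The main obstacle I would expect is the dimension-free tensorization step in phase (ii): the standard matrix Lieb inequality is proved via variational characterizations that do not transparently extend to infinite-dimensional operators. I would route around this either by (a) truncating to the finite-rank projections determined by the dominant spectrum of $\Vmat$, applying the finite-dimensional result, and passing to the limit using trace-class approximation, or (b) directly citing Minsker's~\cite{minsker2017some} Hilbert-space Lieb-type inequality. The rest of the argument is a careful but essentially routine optimization of the scalar bound $(\trace(\Vmat)/R)\exp(\numobs g(\theta) R - \theta \numobs t)$; the condition $t\ge\sqrt{R/\numobs}+L/(3\numobs)$ ensures that the optimizing $\theta$ lies in the admissible range where the MGF estimate is valid.
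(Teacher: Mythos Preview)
The paper does not prove this lemma: it is stated as an auxiliary result with a direct citation to Minsker~\cite{minsker2017some} and no argument given. So there is no ``paper's proof'' to compare against beyond the reference itself.

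Your proposal is a faithful reconstruction of the Minsker strategy, and the overall architecture (Bernstein-type operator MGF, tensorization, intrinsic-dimension conversion) is correct. One point deserves more care than you give it in the body of the sketch: the Laplace-transform bound $\Prob[\opnorm{\Smat_\numobs/\numobs}\ge t]\le e^{-\theta \numobs t}\,\Exp\trace\exp(\theta \Smat_\numobs)$ is vacuous in infinite dimensions because $\trace\exp(\theta \Smat_\numobs)=+\infty$ (the operator $\exp(\theta \Smat_\numobs)$ is never trace-class, even when $\Smat_\numobs$ is). Your dimension-free conversion step, applied \emph{after} tensorization to the deterministic operator $\numobs g(\theta)\Vmat$, does not repair this, since $\trace\exp(\numobs g(\theta)\Vmat)$ is equally infinite; subtracting the identity must happen \emph{before} tensorization, not after. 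Minsker's actual route replaces $e^{x}$ by a function vanishing at the origin (essentially $e^{x}-x-1$) so that all traces are finite throughout. You do flag exactly this as the ``main obstacle'' and correctly identify both the finite-rank-approximation workaround and the option of invoking Minsker's operator Lieb-type inequality directly; either of these closes the gap. With that caveat, your plan is sound and matches the cited source.
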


Next, we turn attention to bounding the maxima of empirical processes.
Let $X_1, X_2, \ldots, X_\numobs$ be independent random variables.
Let $\FuncClass$ be a countable class of functions uniformly bounded
by $b$. Assume that for all $i$ and all $f \in \FuncClass$, $\Exp [f
  (X_i)] = 0$.  We are interested in controlling the random variable
$Z \coloneqq \sup_{f \in \FuncClass} \sum_{i=1}^{\numobs} f(X_i)$, for
which the variance statistics $v^2 \coloneqq \sup_{f \in \FuncClass}
\Exp [ \sum_{i=1}^{n} ( f(X_i) )^2 ]$ is crucial.  Now we are in position to state
the classical Talagrand's concentration inequalities; see the
paper~\cite{klein2005concentration}.

\begin{lemma}
\label{lemma:talagrand}
For all $t > 0$, we have
\begin{align}
  \Prob (Z \geq \Exp[Z] + t) \leq \exp \Big( -\frac{t^2}{ 2 (v^2
    + 2v \Exp[Z] ) + 3 v t} \Big).
\end{align}
\end{lemma}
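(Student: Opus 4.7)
The plan is to establish this inequality via the entropy method of Ledoux, which is the standard route used by Bousquet and by Klein--Rio to obtain Bernstein-type concentration for suprema of empirical processes with sharp constants. The key object is the log-moment generating function $\psi(\lambda) \coloneqq \log \Exp\big[\exp\big(\lambda (Z - \Exp[Z])\big)\big]$, and the goal is to bound $\psi$ by a sub-exponential expression in $\lambda$ of Bernstein form, and then invoke the Chernoff bound and optimize over $\lambda > 0$.

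The first step is to apply the tensorization inequality for entropy to the function $F(X_1, \ldots, X_\numobs) \coloneqq \exp(\lambda Z)$: this yields
\begin{align*}
 \mathrm{Ent}(F) \leq \sum_{i=1}^{\numobs} \Exp\big[ \mathrm{Ent}_i(F) \big],
\end{align*}
where $\mathrm{Ent}_i$ denotes conditional entropy with respect to $X_i$ holding the other variables fixed. The second step is to introduce the leave-one-out variable $Z^{(i)} \coloneqq \sup_{f \in \FuncClass} \sum_{j \neq i} f(X_j)$, noting that $Z^{(i)}$ is independent of $X_i$ and satisfies $0 \leq Z - Z^{(i)} \leq f^*(X_i)$ for the argmax $f^*$ attaining $Z$. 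Applying a modified log-Sobolev inequality (Massart's variant) to each conditional entropy and summing produces a bound of the schematic form $\mathrm{Ent}(F) \leq \lambda \Exp[F \cdot h(\lambda, X)]$ for an explicit function $h$; after some algebra this converts into a differential inequality on $\psi(\lambda)$.

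The third step is to solve this differential inequality. The sharp constants in the Klein--Rio form come from carefully tracking how the variance statistic $v^2$ and $\Exp[Z]$ enter: the variance-like contribution yields the $v^2 + 2v\Exp[Z]$ term, while the range-like contribution from $\sup_f |f| \leq b$ combined with a Cauchy--Schwarz estimate yields the $3v$ coefficient of $t$ in the denominator. Integrating the differential inequality and substituting into $\Prob(Z - \Exp[Z] \geq t) \leq \exp(\psi(\lambda) - \lambda t)$, followed by optimization of $\lambda > 0$, produces precisely the Bernstein-type denominator $2(v^2 + 2v\Exp[Z]) + 3vt$ displayed in the claim.

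The main obstacle will be obtaining these \emph{sharp} constants rather than weaker ones that would follow from a direct Efron--Stein or bounded-differences approach. The Efron--Stein route would readily give Gaussian concentration in terms of $v^2$, but would lose the $v \Exp[Z]$ cross-term that makes the inequality genuinely useful when $\Exp[Z]$ is large relative to $v$, and would not produce the correct sub-exponential tail in the regime $t \gg v$. Recovering the precise form requires the log-Sobolev step and the delicate linearization of $(1+u)\log(1+u) - u$ that underlies Bousquet's argument; since this is a well-established classical derivation, we would ultimately cite~\cite{klein2005concentration} rather than reproduce the bookkeeping in full.
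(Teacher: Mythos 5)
The paper does not prove this lemma at all: it is quoted verbatim as a known result with the citation to Klein and Rio~\cite{klein2005concentration}, which is exactly where your proposal also lands. Your outline of the underlying entropy-method argument (tensorization of entropy, modified log-Sobolev inequality, differential inequality on the log-moment generating function, followed by Chernoff and optimization) is an accurate description of the Klein--Rio and Bousquet derivation, so your proposal and the paper take the same approach: defer to the classical reference rather than reproducing the bookkeeping.

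One small caution: in the form you sketch, the Bernstein denominator classically involves the \emph{envelope} bound $b = \sup_{f}\|f\|_\infty$ in the linear terms (i.e., $2(v^2 + 2b\,\Exp[Z]) + 3bt$), and the "range-like contribution'' you describe indeed produces a factor of $b$, not $v$. The paper's displayed inequality uses $v$ in those positions, which appears to be a benign typo in the statement; your entropy-method route would naturally yield the $b$-version, so if you ever did carry out the derivation you would want to be explicit about which constant appears.
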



\end{document}